\def\into{\hookrightarrow}
\def\Lra{\Leftrightarrow}
\def\toisom{\widetilde{\to}}
\def\.{,\dots ,}
\def\wt{\widetilde}
\def\ol{\overline}
\def\Val{{\rm Val}}
\def\Tor{{\rm Tor}}
\def\Spa{{\rm Spa}}
\def\Spv{{\rm Spv}}
\def\Spec{{\rm Spec}}
\def\Proj{{\rm Proj}}
\def\Frac{{\rm Frac}}
\def\bfSpec{{\bf Spec}}
\def\bfProj{{\bf Proj}}
\def\reg{{\rm reg}}
\def\Ker{{\rm Ker}}
\def\RZ{{\rm RZ}}
\def\pr{{\rm pr}}
\def\bfN{{\bf N}}
\def\bfP{{\bf P}}
\def\bfZ{{\bf Z}}
\def\bfx{{\bf x}}
\def\bfy{{\bf y}}
\def\bfz{{\bf z}}
\def\gtS{{\mathfrak S}}
\def\gtU{{\mathfrak U}}
\def\gtX{{\mathfrak X}}
\def\gtY{{\mathfrak Y}}
\def\calA{{\mathcal A}}
\def\calE{{\mathcal E}}
\def\calI{{\mathcal I}}
\def\calK{{\mathcal K}}
\def\calL{{\mathcal L}}
\def\calM{{\mathcal M}}
\def\calO{{\mathcal O}}
\def\oA{{\ol A}}
\def\oB{{\ol B}}
\def\oC{{\ol C}}
\def\oD{{\ol D}}
\def\oI{{\ol I}}
\def\oR{{\ol R}}
\def\oS{{\ol S}}
\def\oT{{\ol T}}
\def\oU{{\ol U}}
\def\oX{{\ol X}}
\def\oY{{\ol Y}}
\def\of{{\ol f}}
\def\oy{{\ol y}}
\def\tilK{{\wt K}}
\def\tilR{{\wt R}}
\def\tilf{{\wt f}}
\def\tilg{{\wt g}}
\def\tilh{{\wt h}}
\def\tilx{{\wt x}}
\def\ogtX{{\ol\gtX}}
\def\obfy{{\ol\bfy}}
\def\oeta{{\ol\eta}}
\def\ophi{{\ol\phi}}
\def\opsi{{\ol\psi}}
\def\tilphi{{\wt\phi}}
\def\alp{{\alpha}}
\def\lam{{\lambda}}
\def\veps{\varepsilon}
\def\ve{\veps}
\def\R+*{{\bf R^*_+}}
\newtheorem{theor}{Theorem}[subsection]
\newtheorem{prop}[theor]{Proposition}
\newtheorem{lem}[theor]{Lemma}
\newtheorem{cor}[theor]{Corollary}
\theoremstyle{definition}
\newtheorem{defin}[theor]{Definition}
\newtheorem{rem}[theor]{Remark}
\begin{document}

\title{Relative Riemann-Zariski spaces}
\author{Michael Temkin}
\address{\tiny{Einstein Institute of Mathematics, The Hebrew University of Jerusalem, Giv'at Ram, Jerusalem, 91904, Israel}}
\email{\scriptsize{temkin@math.huji.ac.il}}
\thanks{I want to express my deep gratitude to B. Conrad for pointing out various gaps and mistakes in an earlier
version of the article and to thank R. Huber for a useful discussion. Also I thank D. Rydh and the referee for
pointing out some mistakes in \S2.3. A first version of the article was written during my stay at the Max Planck
Institute for Mathematics at Bonn. The final revision was made when the author was staying at IAS and supported
by NFS grant DMS-0635607.}

\begin{abstract}
In this paper we study relative Riemann–Zariski spaces associated to a
morphism of schemes and generalizing the classical Riemann–Zariski space
of a field. We prove that similarly to the classical RZ spaces, the relative
ones can be described either as projective limits of schemes in the category
of locally ringed spaces or as certain spaces of valuations. We apply these
spaces to prove the following two new results: a strong version of stable
modification theorem for relative curves; a decomposition theorem which
asserts that any separated morphism between quasi-compact and quasi-separated
schemes factors as a composition of an affine morphism and
a proper morphism. In particular, we obtain a new proof of Nagata's
compactification theorem.
\end{abstract}

\maketitle

% Correct the references to Rydh, Ferrand and temst !

\section{Introduction}

Let $K/k$ be a finitely generated field extension. In the first half of the 20-th century, Zariski defined a
Riemann variety $\RZ_K(k)$ as the projective limit of all projective $k$-models of $K$. Zariski showed that this
topological space, which is now called a Riemann-Zariski (or Zariski-Riemann) space, possesses the following
set-theoretic description: to give a point $\bfx\in\RZ_K$ is equivalent to give a valuation ring $\calO_\bfx$
with fraction field $K$ and such that $k\subset\calO_\bfx$. The Riemann-Zariski space possesses a sheaf of rings
$\calO$ whose stalks are valuation rings of $K$ as above. Zariski made extensive use of these spaces in his
desingularization works.

Let $S$ be a scheme and $U$ be a subset closed under generalizations, for example $U=S_\reg$ is the regular
locus of $S$, or $U=\eta$ is a maximal point of $S$. In many birational problems one wants to consider only
$U$-modifications $S'\to S$, i.e. modifications which do not modify $U$. Then it is natural to consider the
projective limit $\gtS=\RZ_U(S)$ of all $U$-modifications of $S$. It was remarked in \cite[\S3.3]{Temst} that
working with such relative Riemann-Zariski spaces one can extend the $P$-modification results of \cite{Temst} to
the case of general $U$ and $S$, and this plan is realized in \S\ref{applchap}. In \S\ref{affsec} we give a
preliminary description of the space $\gtS$, which is used in \S\ref{applicsec} to prove the first main result
of the paper, the stable modification theorem \ref{stabmodtheorU} generalizing its analog from \cite{Temst}. Our
improvement to the stable modification theorem \cite[1.5]{Temst} is in the control on the base change one has to
perform in order to construct a stable modification of a relative curve $C\to S$. Namely, we prove that in order
to find a stable modification of a relative curve with semi-stable $U$-fibers it suffices to replace the base
$S$ with a $U$-\'etale covering.

Although a very rough study of relative RZ spaces suffices for the proof of Theorem \ref{stabmodtheorU}, it
seems natural to investigate these spaces deeper. Furthermore, the definition of relative Riemann-Zariski spaces
can be naturally generalized to the case of an arbitrary morphism $f:Y\to X$, and the case when $f$ is a
dominant point was already applied in \cite{Tem1}. So, it is natural to investigate the relative RZ spaces
associated to a morphism $f:Y\to X$. We will see that under a very mild assumption that $f$ is a separated
morphism between quasi-compact quasi-separated schemes, one obtains a very specific description of the space
$\RZ_Y(X)$ which is similar to the classical case of $\RZ_K(k)$. Let us say that $f$ is {\em decomposable} if it
factors into a composition of an affine morphism $Y\to Z$ and a proper morphism $Z\to X$. Actually, in
\S\ref{affsec} we study $\RZ_Y(X)$ in the case of a general decomposable morphism because this case is not
essentially easier than the case of an open immersion $Y\into X$. We define a set $\Val_Y(X)$ whose points are
certain $X$-valuations of $Y$, and construct a surjection $\psi:\Val_Y(X)\to\RZ_Y(X)$. It will require some
additional work to prove in Corollary \ref{homeomcor} that $\psi$ is actually a bijection (and even a
homeomorphism with respect to natural topologies defined in the paper). Now, a natural question to ask is if the
decomposition assumption is essential. Slightly surprisingly, the answer is negative because the assumption is
actually empty. A second main result of this paper is decomposition theorem \ref{decompth} which states that a
morphism of quasi-compact quasi-separated schemes is decomposable if and only if it is separated. Thus, the
description of relative RZ spaces obtained in the decomposable case is actually the general one.

We give two proofs of the decomposition theorem in this paper. The first proof is based on Nagata
compactification and Thomason approximation theorems. Actually, we prove in \S\ref{intrsec} that the
decomposition theorem is essentially equivalent to the union of these two theorems. This accomplishes the first
proof. On the other hand, it turns out that a deeper study of relative RZ spaces leads to an independent proof
of the decomposition theorem as explained in \S\ref{mainsec}. In particular, we obtain new proofs of Nagata's
and Thomason's theorems. Though there are few known proofs of Nagata's theorem, see \cite{Con} and \cite{Lut},
the author expects that the new proof might be better suited for applying to algebraic spaces and (perhaps)
certain classes of stacks (joint project with I. Tyomkin).

Let us describe briefly the structure of the paper. In \S\ref{intrsec} we prove a slight generalization of
Thomason's theorem and show that the decomposition theorem is essentially equivalent to the union of Nagata's
and Thomason's theorems. In \S\ref{applchap} we start our study of relative RZ spaces and apply them to the
strong stable modification theorem. Then, \S\ref{nagchap} is devoted to further study of the relative RZ spaces.
In \S\ref{spasec} we establish an interesting connection between Riemann-Zariski spaces and adic spaces of R.
Huber; in particular, we obtain an intrinsic topology on $\Val_Y(X)$. However, it turns out that the notion of
an open subdomain in the spaces $\Val_Y(X)$ is much finer than its analog in the adic spaces. It requires some
work to prove in Theorem \ref{affbaseth} that open subdomains of the form $\Val_{\Spec(B)}(\Spec(A))$ form a
basis for the topology of $\Val_Y(X)$. In \S\ref{blowupsec} we study $Y$-blow ups of $X$, which are analogs of
$U$-admissible or formal blow ups from Raynaud's theory, see \cite{BL}. As a corollary, we prove that
$\psi:\Val_Y(X)\to\RZ_Y(X)$ is a homeomorphism in the decomposable case. Finally, we prove in Theorem
\ref{domth} that any open quasi-compact subset of $\Val_Y(X)$ admits a scheme model of the form $\Val_\oY(\oX)$
with $\oY$ being $\oX$-affine. This result implies the decomposition theorem, and, therefore, leads to a new
proof of Nagata's theorem.

I want to mention that I was motivated by Raynaud's theory in my study of Riemann-Zariski spaces in the
decomposable case, and some basic ideas are taken from \cite{BL}. I give a simple illustration of those ideas in
the proof of the generalized Thomason's theorem.

When this paper was almost finished I was informed about a recent paper \cite{FK} by Fujiwara and Kato, which
contains a survey on a theory of generalized Riemann-Zariski spaces they are developing. The survey announces
many interesting results, including Nagata compactification for algebraic spaces. It is clear that there is a
certain overlap between that theory and the present paper which can be rather large, though it is difficult to
make any conclusion on this subject until the actual proofs are published. The generalized RZ spaces mentioned
in \cite{FK} are exactly the relative RZ spaces of open immersions $Y\into X$ (the same case which is used in
the proof of the stable modification theorem).

Finally, let us discuss the most recent progress that was made during the last year. Nagata compactification for
algebraic spaces was proved independently by Conrad-Lieblich-Olsson in \cite{CLO} (implementing Gabber's
approach) and D. Rydh in \cite{Rydh}. In both cases one reduces this to the scheme case rather than proving it
from scratch. It should also be noted in this context that important particular cases of the latter theorem
(when the algebraic spaces are normal or when the target is a field) were proved much earlier by Raoult, see
\cite{R1} and \cite{R2}.

\subsection{On noetherian approximation and Nagata compactification}
\label{intrsec} For shortness, a filtered projective family of schemes with affine transition morphisms will be
called {\em affine filtered family}. Also, we abbreviate the words "quasi-compact and qua\-si-separated" by the
single "word" qcqs. In \cite[C.9]{TT}, Thomason proved a very useful approximation theorem, which states that
any qcqs scheme $Y$ over a ring $\Lambda$ is isomorphic to a scheme $\projlim Y_\alpha$, where
$\{Y_\alpha\}_\alpha$ is an affine filtered family of $\Lambda$-schemes of finite presentation. Due to the
following lemma, this theorem may be reformulated in a more laconic way as follows: $Y$ is affine over a
$\Lambda$-scheme $Y_0$ of finite presentation.

\begin{lem}
A morphism of qcqs schemes $f:Y\to X$ is affine if and only if $Y\toisom\projlim Y_\alpha$, where
$\{Y_\alpha\}_\alpha$ is a filtered family of $X$-affine finitely presented $X$-schemes.
\end{lem}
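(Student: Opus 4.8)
The plan is to prove the two implications separately; the forward implication carries the real content and rests on approximating quasi-coherent algebras on the qcqs base $X$ by finitely presented ones.

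For the easy direction, suppose $Y\toisom\projlim Y_\alpha$ with each $Y_\alpha\to X$ affine and of finite presentation. First I would observe that every transition morphism $Y_\beta\to Y_\alpha$ is automatically affine: it is a morphism of $X$-schemes whose source is affine over $X$ and whose target is separated over $X$ (affine morphisms being separated), so factoring it through its graph $Y_\beta\to Y_\beta\times_X Y_\alpha\to Y_\alpha$ exhibits it as a closed immersion followed by a base change of the affine morphism $Y_\beta\to X$, hence as a composite of affine morphisms. Fixing an index $\alpha_0$ and using that the indices $\geq\alpha_0$ are cofinal, the system $\{Y_\alpha\}_{\alpha\geq\alpha_0}$ then consists of affine $Y_{\alpha_0}$-schemes, so $Y=\projlim_{\alpha\geq\alpha_0}Y_\alpha=\bfSpec_{Y_{\alpha_0}}(\calB)$ for the corresponding colimit algebra $\calB$, and $Y\to Y_{\alpha_0}$ is affine. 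Composing with the affine morphism $Y_{\alpha_0}\to X$ shows $Y\to X$ is affine.

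For the converse, assume $f$ is affine and set $\calA=f_*\calO_Y$; since $f$ is qcqs this is a quasi-coherent $\calO_X$-algebra and $Y=\bfSpec_X(\calA)$. The key is to write $\calA$ as a filtered union $\bigcup_\alpha\calA_\alpha$ of quasi-coherent $\calO_X$-subalgebras that are finitely presented as $\calO_X$-algebras. Given this, $Y_\alpha:=\bfSpec_X(\calA_\alpha)$ is affine and of finite presentation over $X$, the inclusions $\calA_\alpha\hookrightarrow\calA$ form a filtered system with colimit $\calA$, and therefore $\projlim Y_\alpha=\bfSpec_X(\bigcup_\alpha\calA_\alpha)=\bfSpec_X(\calA)=Y$, as required.

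The heart of the argument --- and the step I expect to be the main obstacle --- is producing the finitely presented subalgebras, which is exactly where the qcqs hypothesis on $X$ enters. I would proceed in two stages. First, by the standard fact that on a qcqs scheme every quasi-coherent sheaf is the filtered union of its finite-type quasi-coherent subsheaves, I write $\calA$ as a filtered union of finite-type quasi-coherent $\calO_X$-subalgebras $\calB_i$ (each generated as an algebra by a finite-type submodule). Such a $\calB_i$ is of finite type but need not be finitely presented, so a second approximation is required: choosing finitely many algebra generators realizes $\calB_i$ as a quotient $\calO_X[T_1\.T_n]/\calJ_i$ with $\calJ_i$ quasi-coherent, and applying the same approximation to the ideal $\calJ_i$ expresses it as a filtered union of finite-type quasi-coherent sub-ideals $\calJ_{i,j}$. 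Each $\calO_X[T_1\.T_n]/\calJ_{i,j}$ is then a finitely presented $\calO_X$-algebra, $\calB_i$ is their filtered colimit, and amalgamating the indices $i$ and $j$ into a single filtered index set exhibits $\calA$ as a filtered union of finitely presented quasi-coherent subalgebras. The remaining check that the resulting morphisms $Y_\alpha\to X$ are of finite presentation is routine, since they are affine over the qcqs scheme $X$ and hence quasi-compact and quasi-separated.
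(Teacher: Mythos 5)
Your argument is, in substance, the paper's: the hard direction reduces to the fact that on a qcqs scheme every quasi-coherent $\calO_X$-algebra is a filtered colimit of finitely presented quasi-coherent $\calO_X$-algebras --- the paper simply quotes this as \cite[6.9.16(iii)]{EGAI}, while you reprove it by hand via finite-type subalgebras and finite-type subideals of the presenting ideals --- and the easy direction is in both cases the observation that relative $\bfSpec$ turns filtered colimits of quasi-coherent algebras into projective limits of schemes (your detour through a fixed $Y_{\alpha_0}$ is correct but unnecessary, since the $Y_\alpha$ are already $X$-affine and one may take the colimit of the pushforward algebras over $X$ itself, as the paper does).

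However, one claim you make is false as stated and should be corrected, even though your own construction supplies the correction. You assert that $\calA=f_*\calO_Y$ can be written as a filtered \emph{union} $\bigcup_\alpha\calA_\alpha$ of finitely presented quasi-coherent \emph{sub}algebras. This is impossible in general: take $X=\Spec(R)$ with $R$ non-noetherian and $I\subset R$ an ideal that is not finitely generated, and let $Y=\Spec(R/I)$ (a closed immersion is affine); then the only $\calO_X$-subalgebra of $\calA$ is $\calA$ itself, because $R/I$ is generated by the image of $R$, and it is not finitely presented. What your two-stage construction actually yields is a filtered \emph{colimit}: the maps $\calO_X[T_1\. T_n]/\calJ_{i,j}\to\calA$ have kernel $\calJ_i/\calJ_{i,j}$, hence are usually not injective (in the example above they are the surjections $R/I_j\to R/I$ over the finitely generated subideals $I_j\subset I$). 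This costs nothing: the lemma requires no injectivity, only a filtered family of $X$-affine finitely presented $X$-schemes, and $\projlim\bfSpec(\calE_\alpha)\toisom\bfSpec(\injlim\calE_\alpha)$ holds for an arbitrary filtered system of quasi-coherent algebras. So replace ``filtered union of subalgebras'' by ``filtered colimit'' throughout; with that change (and a word of care when amalgamating the pairs $(i,j)$ into a single filtered system --- the transition maps are built by lifting generators, and one uses that a finite-type quasi-coherent subsheaf of $\calJ_{i'}=\injlim_j\calJ_{i',j}$ factors through some $\calJ_{i',j}$ because $X$ is qcqs), your proof is complete and coincides with the paper's.
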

\begin{proof}
If $Y\toisom\projlim Y_\alpha$ is as in the lemma then $Y_\alpha=\bfSpec(\calE_\alpha)$ for an $\calO_X$-algebra
$\calE_\alp$, hence $Y=\bfSpec(\calE)$ where $\calE=\injlim\calE_\alpha$. Conversely, suppose that $f$ is
affine. By \cite[6.9.16(iii]{egaI}), $f_*(\calO_Y)\toisom\injlim\calE_\alpha$, where $\{\calE_\alpha\}$ is a
filtered family of finitely presented $\calO_X$-algebras. Hence $Y=\projlim\bfSpec(\calE_\alpha)$.
\end{proof}

We generalize Thomason's theorem below. As a by-product, we obtain a simplified proof of the original theorem.

\begin{theor}
\label{approxtheor} Let $f:Y\to X$ be a (separated) morphism of qcqs schemes. Then $f$ can be factored into a
composition of an affine morphism $Y\to Z$ and a (separated) morphism $Z\to X$ of finite presentation.
\end{theor}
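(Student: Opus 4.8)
The plan is to deduce the theorem from Thomason's theorem (the case of an affine base) by an induction on the least number $n$ of affine opens needed to cover $X$, the inductive step being a gluing of local factorizations modeled on Raynaud's construction of formal models. When $n=1$, so that $X$ is affine, Thomason's theorem together with the preceding lemma writes $Y\toisom\projlim Z_\alpha$ with each $Z_\alpha$ of finite presentation over $X$ and affine transition maps; then $Y\to Z:=Z_{\alpha_0}$ is affine for any $\alpha_0$, while $Z\to X$ is of finite presentation. If $f$ is separated then $Z\to X$ is separated for $\alpha_0$ large by the standard limit arguments, which settles the separated variant in the base case. Thus all the content lies in the inductive step; note that the case motivating the paper, an open immersion $Y\into X$, is separated, so the separated variant is the essential one.

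For the inductive step I would write $X=U\cup V$ with $V$ affine and $U$ a union of $n-1$ affine opens, and set $W=U\cap V$ and $Y_W=f^{-1}(W)$. By induction over $U$ and by Thomason's theorem over $V$ there are factorizations $Y_U\to Z_U\to U$ and $Y_V\to Z_V\to V$ of the restrictions of $f$, of the required type. Their restrictions to $W$ are two models $Z_U|_W=g_U^{-1}(W)$ and $Z_V|_W=g_V^{-1}(W)$ of the single morphism $Y_W\to W$, sitting as open subschemes of $Z_U$ and $Z_V$ respectively. The goal is to replace $Z_U$ and $Z_V$ by finitely presented modifications $\widetilde Z_U\to Z_U$ and $\widetilde Z_V\to Z_V$ whose restrictions to $W$ become one common model, and then to glue $\widetilde Z_U$ and $\widetilde Z_V$ along it; since finite presentation is local on the base and the affine maps from $Y$ are arranged to agree over $W$, the glued $Z\to X$ and $Y\to Z$ will then have the desired properties.

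To produce the common model I would take the scheme-theoretic image $T$ of $(a_U,a_V)\colon Y_W\to Z_U|_W\times_W Z_V|_W$; this exists because the morphism is quasi-compact, it receives $Y_W$ schematically densely, and it maps to both factors. The key point is that $Y_W\to T$ is again affine: when $f$ is separated one keeps all local models separated, so the projection $T\to Z_U|_W$ is separated and affineness of $Y_W\to T$ follows from affineness of $Y_W\to Z_U|_W$ by the cancellation property of affine morphisms; the possibly non-separated case is subtler and is the one place where a further reduction is needed. Approximating the ideal of $T$ by a finite-type one makes $T\to W$ of finite presentation. \textbf{The hard part} will be the remaining extension problem: the modifications $T\to Z_U|_W$ and $T\to Z_V|_W$ are defined only over the open $W$ and must be extended to finitely presented modifications over all of $U$ and $V$ that agree over $W$. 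Following Raynaud I would dominate them by admissible blow-ups, extend the finite-type blow-up centers from $g_U^{-1}(W)\subseteq Z_U$ and $g_V^{-1}(W)\subseteq Z_V$ to finite-type ideals on the whole qcqs schemes $Z_U$ and $Z_V$, and blow those up. The delicate points, all concentrated here, are that the centers can be chosen to produce the \emph{same} model from both sides, that $Y_W$ stays affine over it, and that finite presentation and separatedness are preserved throughout; this is exactly where the approximation theorems and the blow-up calculus of \cite{BL} do the work.
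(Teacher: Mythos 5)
Your base case is fine, but the entire content of the theorem sits in the inductive step, and there the construction you sketch cannot be completed with the tools you invoke. The two maps you need to extend, $T\to Z_U|_W$ and $T\to Z_V|_W$, are neither proper nor affine. They are not proper: $T$ is closed in $Z_U|_W\times_W Z_V|_W$ and the projection to $Z_U|_W$ is a base change of $Z_V|_W\to W$, which is merely of finite presentation --- non-properness of the models over the base is the whole point of a Thomason-type statement. So these maps are not modifications and cannot be dominated by admissible blow ups (blow ups are proper); the blow-up calculus of \cite{BL}, like that of \S\ref{blowupsec} of the paper (where moreover $f$ is assumed affine), concerns proper $\Proj$-type morphisms and simply does not apply, and extending blow-up centers is moot because no suitable centers exist to begin with. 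Nor are they affine: cancellation gives affineness of $Y_W\to T$, not of $T\to Z_U|_W$; for instance the generic point $\Spec(K(t))\into\bbP^1_K$ is affine and schematically dominant while $\bbP^1_K\to\Spec(K(t))$'s target, i.e. $\bbP^1_K\to\Spec(K)$, is not affine. Consequently the one extension mechanism that does work in this generality --- extension of finitely presented quasi-coherent algebras together with a homomorphism to a given one, \cite[6.9.10.1]{EGAI} --- is unavailable as well, and no mechanism remains to produce your finitely presented extensions over $Z_U$ and $Z_V$. This is a genuine gap, not an omitted verification. (Separately, you defer the non-separated case with no actual plan; note that the correct reduction goes in the opposite direction: the paper proves the general statement first and then upgrades $Z$ to be $X$-separated via the limit argument \cite[C.7]{TT}.)

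The missing idea is to arrange that the comparison morphism between the two local models over the overlap is \emph{itself} affine and finitely presented, because such morphisms do extend. The paper achieves this by inducting on a finite qcqs open cover of the \emph{source} $Y$ rather than of the base $X$: writing $U=Y_1=\projlim U_\beta$ and $V=Y_2=\projlim V_\gamma$ as affine filtered limits of finitely presented $X$-schemes, the limit theorems \cite[8.2.11, 8.13.1]{EGAIV} produce morphisms passing back and forth between the two towers over $W=U\cap V$ whose composites are transition maps, hence affine; cancellation then shows one comparison map is separated and the next one, $f_{\gamma,\beta}:V'_\gamma\to U'_\beta$, is affine (and finitely presented), after which \cite[6.9.10.1]{EGAI} extends it, together with the map from $U$, to an affine finitely presented $U_\gamma\to U_\beta$, and one glues $U_\gamma$ with $V_\gamma$ along $V'_\gamma$. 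Your skeleton of inducting on an affine cover of $X$ could in principle be repaired, but only by discarding the schematic image $T$ and running this same affine-domination argument on the two limit presentations of $Y_W$ (as an affine limit over $Z_U|_W$ and over $Z_V|_W$); that argument is exactly the paper's key step and is what your proposal lacks.
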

\begin{proof}
Step 1. {\sl Preliminary work.} First we observe that if $f$ is separated and $Y\to Z\to X$ is a factorization
as in the theorem, then $Y$ is the projective limit of schemes $Y_\alp$ which are affine over $Z$ and of finite
presentation. By \cite[C.7]{TT}, already some $Y_\alp$ is separated over $X$, hence replacing $Z$ with $Y_\alp$,
we achieve a factorization with $X$-separated $Z$. This allows us to deal only with the general (not necessarily
separated) case in the sequel.

If $Y$ is affine and $f(Y)$ is contained in an open affine subscheme $X'\subset X$ then the claim is obvious.
So, $Y$ admits a finite covering by open qcqs subschemes $Y_1\. Y_n$ such that the induced morphisms $Y_i\to X$
satisfy the conclusion of the theorem. It suffices to prove that one can decrease the natural number $n$ until
it becomes $1$, and, obviously, it suffices to deal only with the case of $n=2$. Then the schemes $U:=Y_1$ and
$V:=Y_2$ can be represented as $U=\projlim U_\beta$ and $V=\projlim V_\gamma$, where the limits are taken over
$X$-affine filtered families of $X$-schemes of finite presentation.

Step 2. {\sl Affine domination.} By \cite[$\rm IV_3$, 8.2.11]{ega}, for $\beta\ge\beta_0$ and
$\gamma\ge\gamma_0$, the schemes $U_\beta$ and $V_\gamma$ contain open subschemes $U'_\beta$ and $V'_\gamma$,
whose preimages in $U$ and $V$ coincide with $W:=U\cap V$. By \cite[$\rm IV_3$, 8.13.1]{ega}, the morphism $W\to
U'_{\beta_0}$ factors through $V'_\gamma$ for sufficiently large $\gamma$. Replace $\gamma_0$ by $\gamma$. By
the same reason, the morphism $W\to V'_{\gamma_0}$ factors through some $U'_\beta$ and the morphism $W\to
U'_\beta$ factors through some $V'_\gamma$. Let us denote the corresponding morphisms as
$f_{\gamma,\beta}:V'_\gamma\to U'_\beta$, $f_{\beta,\gamma_0}$ and $f_{\gamma_0,\beta_0}$. Now comes an obvious
but critical argument: $f_{\beta,\gamma_0}$ is separated because the composition $f_{\gamma_0,\beta_0}\circ
f_{\beta,\gamma_0}:U'_\beta\to U'_{\beta_0}$ is separated (and even affine); $f_{\gamma,\beta}$ is affine
because its composition with the separated morphism $f_{\beta,\gamma_0}$ is affine. We gather the already
defined objects in the left diagram below. Note that everything is defined over $X$, the horizontal arrows are
open immersions, the vertical arrows are affine morphisms and the indexed schemes are of finite
$X$-presentation.
$$
\xymatrix{ V\ar[d]  & W \ar[d]^{\phi'}\ar@{_{(}->}[l]\ar@{^{(}->}[r] & U\ar[dd]^h & & &
V\ar[d]  & W \ar[d]^{\phi'}\ar@{_{(}->}[l]\ar@{^{(}->}[r] & U\ar[d]^\phi\\
V_\gamma& V'_\gamma \ar[d]\ar@{_{(}->}[l] & & & &
V_\gamma& V'_\gamma \ar[d]^{f_{\gamma,\beta}}\ar@{_{(}->}[l]\ar@{^{(}->}[r] & U_\gamma\ar[d]\\
 & U'_\beta \ar@{^{(}->}[r] & U_\beta & & & & U'_\beta \ar@{^{(}->}[r] & U_\beta}
$$

Step 3. {\sl Affine extension.} The main task of this step is to produce the right diagram from the left one. It
follows from the previous stage that $V'_\gamma=\bfSpec(\calE')$, where $\calE'$ is a finitely presented
$\calO_{U'_\beta}$-algebra. The morphism $\phi':W\to V'_\gamma$ to a $U'_\beta$-affine scheme corresponds to a
homomorphism $\varphi':\calE'\to h'_*(\calO_W)$, where $h':W\to U'_\beta$ is the projection. Obviously
$h_*(\calO_U)|_{U'_\beta}\toisom h'_*(\calO_W)$, where $h:U\to U_\beta$ is the projection. Hence we can apply
\cite[6.9.10.1]{egaI}, to find a finitely presented $\calO_{U_\beta}$-algebra $\calE$ and a homomorphism
$\varphi:\calE\to h_*(\calO_U)$ such that $\calE|_{U'_\beta}\toisom\calE'$ and the restriction of $\varphi$ to
$U'_\beta$ is $\varphi'$. Set $U_\gamma=\bfSpec(\calE)$, then $U_\gamma\to U_\beta$ is an affine morphism whose
restriction over $U'_\beta$ is $f_{\gamma,\beta}$, and $\varphi$ induces a morphism $\phi:U\to U_\gamma$.
Finally, we glue $U_\gamma$ and $V_\gamma$ along $V'_\gamma$ obtaining a finitely presented $X$-scheme $Z$, and
notice that the affine morphisms $U\to U_\gamma$ and $V\to V_\gamma$ glue to an affine morphism $Y\to Z$ over
$X$.
\end{proof}

Our proof is a simple analog of Raynaud's theory. Thomason used the first two steps (induction argument in the
proof of Theorem C.9 and Lemma C.6). Our simplification of his proof is due to the third step. The same
arguments are used in Raynaud's theory, see the end of the proof of \cite[4.1(d)]{BL} and \cite[2.6(a)]{BL}. In
our paper, they also appear in the proofs of Lemmas \ref{blowuplem}(i) and \ref{extblowuplem}, and Theorem
\ref{domth}.

Next, we recall Nagata compactification theorem, see \cite{Nag}. A scheme theoretic proof of the theorem can be
found in \cite{Con} or \cite{Lut}. Recall that a morphism $f:Y\to X$ is called {\em compactifiable} if it can be
factored as a composition of an open immersion $g:Y\to Z$ and a proper morphism $h:Z\to X$. Nagata proved that a
finite type morphism $f:Y\to X$ of qcqs schemes is compactifiable if and only if it is separated. Actually,
Nagata considered noetherian schemes, and the general case was proved by B. Conrad in \cite{Con}.

Assume that $f$ is factored as above. Let $\calI\subset\calO_Z$ be an ideal with support $Z\setminus Y$ and let
$Z'$ be the blow up of $Z$ along $\calI$. We can choose a finitely generated $\calI$ because the morphism
$Y\into Z$ is quasi-compact. The open immersion $g':Y\to Z'$ is affine because $Z'\setminus Y$ is a locally
principal divisor. It follows that $g$ is a composition of an affine morphism $g'$ of finite type and a proper
morphism $Z'\to X$. Conversely, assume that $g:Y\to Z$ is affine of finite type and $Z\to X$ is proper. Then $Y$
is quasi-projective over $Z$, hence there exists an open immersion of finite type $Y\into\oY$ with
$Z$-projective and, therefore, $X$-proper $\oY$. Thus, Nagata's theorem can be reformulated as follows: a finite
type morphism is separated if and only if it can be represented as a composition of an affine morphism of finite
type and a proper morphism. Now, one sees that a weak form of Theorem \ref{approxtheor} ($f$ is separated and
$Z\to X$ is of finite type) and Nagata's theorem are together equivalent to the following decomposition theorem,
which will be also proved in \S\ref{mainsec} by a different method.

\begin{theor}
\label{decompth} A morphism $f:Y\to X$ of quasi-compact quasi-separated schemes is separated if and only if it
can be factored as a composition of an affine morphism $Y\to Z$ and a proper morphism $Z\to X$.
\end{theor}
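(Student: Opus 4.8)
The plan is to assemble the two ingredients recalled just above: the weak form of the generalized approximation theorem \ref{approxtheor} and the reformulation of Nagata's compactification theorem. Indeed, the paragraph preceding the statement already observes that these two results are together equivalent to the present theorem, so the proof amounts to carrying out that equivalence explicitly.

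The reverse implication is immediate: an affine morphism and a proper morphism are both separated, and a composition of separated morphisms is separated; hence any morphism admitting a factorization as a composition of an affine morphism and a proper morphism is automatically separated.

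For the forward implication, suppose that $f:Y\to X$ is separated. First I would invoke the weak form of theorem \ref{approxtheor} to factor $f$ as $Y\to Z\to X$, where the first morphism $a:Y\to Z$ is affine and the second morphism $p:Z\to X$ is separated and of finite presentation; in particular $p$ is separated of finite type. The reformulation of Nagata's theorem then applies to $p$ and yields a further factorization $Z\to Z'\to X$, where $a':Z\to Z'$ is affine of finite type and $q:Z'\to X$ is proper. Since a composition of affine morphisms is again affine, the composite $a'\circ a:Y\to Z'$ is affine; together with the proper morphism $q:Z'\to X$ this exhibits $f$ as the desired composition of an affine morphism and a proper morphism.

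The main point, rather than a genuine obstacle, is that no new idea is required: the two substantial inputs have already been established, and the only observation to make is that the affine datum produced by Nagata composes with the affine morphism coming from theorem \ref{approxtheor}, so that the open immersion implicit in the compactification step is absorbed into a single affine morphism. I would also note in passing that finite presentation implies finite type, so that Nagata's theorem is indeed applicable to $p$. The second, independent proof announced for \S\ref{mainsec} will instead bypass Nagata altogether and recover the decomposition from a direct study of the relative Riemann-Zariski space $\RZ_Y(X)$.
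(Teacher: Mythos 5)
Your proof is correct and is essentially the paper's own first proof: it carries out explicitly the equivalence sketched in \S\ref{intrsec}, composing the affine morphism from the weak form of theorem \ref{approxtheor} with the affine-plus-proper factorization given by the reformulated Nagata theorem, exactly as the paper intends. The paper's second, independent argument via the Riemann-Zariski space in \S\ref{mainsec} is a separate route that you correctly identify but are not required to reproduce.
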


\section{Preliminary description of relative RZ spaces and applications} \label{applchap}

Throughout \S\ref{applchap}, $f:Y\to X$ denotes a separated morphism between qcqs schemes.

\subsection{Valuations and projective limits}\label{firstsec}

We are going to recall some notions introduced in \cite[\S3.3]{Temst}. Consider a factorization of $f$ into a
composition of a schematically dominant morphism $f_i:Y\to X_i$ and a proper morphism $g_i:X_i\to X$. We call
the pair $(f_i,g_i)$ a {\em $Y$-modification} of $X$, and usually it will be denoted simply as $X_i$. Given two
$Y$-modifications of $X$, we say that $X_j$ {\em dominates} or {\em refines} $X_i$, if there exists an
$X$-morphism $g_{ji}:X_j\to X_i$ compatible with $f_i,f_j,g_i$ and $g_j$. A standard graph argument shows that
if $g_{ji}$ exists then it is unique (one uses only that $f_j$ is schematically dominant and $X_i$ is
$X$-separated). The family $\{X_i\}_{i\in I}$ of all $Y$-modifications of $X$ is filtered because any two
$Y$-modifications $X_i,X_j$ are dominated by the scheme-theoretic image of $Y$ in $X_i\times_X X_j$, and it has
an initial object corresponding to the schematic image of $Y$ in $X$

A relative Riemann-Zariski space $\gtX=\RZ_Y(X)$ is defined as the projective limit of the underlying
topological spaces of $Y$-modifications of $X$. Note that if $X$ is integral and $Y$ is its generic point then
one recovers the classical Riemann-Zariski spaces. A slightly more general case, when $Y$ is a dominant point,
was considered in \cite[\S1]{Tem1}. Let $\pi_i:\gtX\to X_i$ be the projections and $\eta:Y\to\gtX$ be the map
induced by $f_i$'s. We provide $\gtX$ with the sheaf $\calM_\gtX=\eta_*(\calO_Y)$, which will be called the
sheaf of {\em meromorphic functions}, and with the sheaf $\calO_\gtX=\injlim\pi_i^{-1}(\calO_{X_i})$, which will
be called the sheaf of {\em regular functions}. The natural homomorphisms
$\alp_i:\pi_i^{-1}(\calO_{X_i})\to\calM_\gtX$ induce a homomorphism $\alp:\calO_\gtX\to\calM_\gtX$, and we will
prove later that $\eta$ is injective and $\alp$ is a monomorphism. Actually, we will give in Corollary
\ref{lastcor} a rather precise meaning to a claim that $\calM_\gtX$ is a sheaf of semi-fractions of the sheaf
$\calO_\gtX$.

\begin{rem}
For any filtered projective family of locally ringed spaces $\{Y_j\}_{j\in J}$ the projective limit
$\gtY=\projlim_{j\in J}Y_i$ always exists and satisfies $|\gtY|:=\projlim|Y_j|$ and $\calO_\gtY=\injlim
\pi_j^{-1}\calO_{Y_j}$ where $\pi_j:\gtY\to Y_j$'s are the projections. Assume now that $Y_j$'s are schemes.
Then $\gtY$ is known to be a scheme when the transition morphisms are affine: this situation is studied very
extensively in \cite[$\rm IV_3$, \S8]{ega} and the obtained results have a plenty of various very important
applications. Although $\gtY$ does not have to be a scheme in general, it is a locally ringed space of a rather
special form which deserves a study. Our relative RZ spaces $(\gtX,\calO_\gtX)$ provide a nice example of such
pro-schemes (while $\calM_\gtX$ corresponds to an extra-structure related to $Y$), and we will later obtain a
very detailed description of these spaces (e.g. we will describe the stalks of $\calO_\gtX$). Another
interesting example of a pro-scheme which is not a scheme but has a very nice realization is as follows: let $X$
be a scheme with a subset $U$ closed with respect to generalization, then $(U,\calO_X|_U)$ is  the projective
limit of all open neighborhoods of $U$. Note that this locally ringed space does not have to be a scheme: for
example, take $U$ to be the set of all non-closed points on an algebraic surface $X$.
\end{rem}

The classical absolute RZ spaces viewed either as topological spaces or, more generally, as locally ringed
spaces admit two alternative descriptions: (a) a projective limit of schemes, (b) a space whose points are
valuations. We defined the relative spaces $\RZ_Y(X)$ using projective limits, but they also admit a "valuative"
description as spaces $\Val_Y(X)$. In \S\ref{applchap} we only introduce the sets $\Val_Y(X)$ and establish a
certain connection between $\RZ_Y(X)$ and $\Val_Y(X)$ which suffices for application to the stable modification
theorem \ref{stabmodtheorU}. Throughout this paper by a {\em valuation} on a ring $B$ we mean a commutative
ordered group $\Gamma$ with a multiplicative map $|\ |:B\to\Gamma\cup\{0\}$ which satisfies the strong triangle
inequality and sends $1$ to $1$. Recall that if $B$ is a field then $R=\{x\in B|\ |x|\le 1\}$ is a valuation
ring of $B$ (i.e. $\Frac(R)=B$) which defines $|\ |$ up to an equivalence. In general, a valuation is defined up
to an equivalence by its kernel $p$, which is a prime ideal, and by the induced valuation on the residue field
$\Frac(B/p)$. By slight abuse of language, the point of $\Spec(B)$ given by $p$ will be also called the {\em
kernel} of $|\ |$. Also, we will often identify equivalent valuations.

\begin{rem}\label{semivalrem}
We follow R. Huber by using the notion of a valuation. Since these valuations may have a non-empty kernel, a
reasonable alternative, however, would be the notion of a semivaluation. Note also that in the literature on
abstract algebra this object is often called Manis valuation.
\end{rem}

Now, let $\Val_Y(X)$ be the set of triples $\bfy=(y,R,\phi)$, where $y\in Y$ is a point, $R$ is a valuation ring
of $k(y)$ (in particular $\Frac(R)=k(y)$) and $\phi:S=\Spec(R)\to X$ is a morphism compatible with
$y=\Spec(k(y))\to Y$ and such that the induced morphism $y\to S\times_X Y$ is a closed immersion. Let
$\calO_\bfy$ denote the preimage of $R$ in $\calO_{Y,y}$ (currently, it is just a ring attached to $\bfy$). We
would like to axiomatize the properties of $\calO_\bfy$ as follows. By a {\em semi-valuation ring} we mean a
ring $\calO$ with a valuation $|\ |$ such that any zero divisor of $\calO$ lies in the kernel $m=\Ker(|\ |)$ and
for any pair $g,h\in\calO$ with $|g|\le |h|\neq 0$ one has that $h|g$. Two structures of a semi-valuation ring
on $\calO$ are {\em equivalent} if their valuations are equivalent.

Note that $\calO$ embeds into $A=\calO_m$ by our assumption on zero divisors, $mA=m$ because the prime ideal $m$
is $(\calO\setminus m)$-divisible, and $R=\calO/m$ is the valuation ring of $A/m$ corresponding to the valuation
induced by $|\ |$. Therefore, $\calO$ is {\em composed} from the local ring $A$ and the valuation ring $R\subset
A/m$ in the sense that $\calO$ is the preimage of $R$ in $A$. We say that $A$ is a {\em semi-fraction ring} of
$\calO$. Conversely, any ring composed from a local ring and a valuation ring is easily seen to be a
semi-valuation ring. Semi-valuation rings play the same role in the theory of relative RZ spaces as valuation
rings do in the theory of usual RZ spaces.

\begin{rem}\label{lastrem}
(i) The structure of a semi-valuation ring on an abstract local ring $\calO$ is uniquely defined (up to an
equivalence) by its kernel $m$ because $\calO/m$ is a valuation ring and hence defines the valuation. Since
$A=\calO_m$ we obtain that the semi-valuation ring structure on $\calO$ is uniquely defined by its embedding
into the semi-fraction ring $A$.

(ii) An abstract ring $\calO$ can admit many semi-valuation ring structures. For example, if $\calO$ is a
valuation ring then any its localization (i.e. a larger valuation ring in its field of fractions) can serve as
its semi-fraction ring.
\end{rem}

Here is a generalization of the classical criterion that an integral domain $\calO$ is a valuation ring if and
only if for any pair of elements $f,g\in\calO$ either $f|g$ or $g|f$.

\begin{lem}\label{valcritlem}
Let $\calO\subset A$ be two rings. Then the following conditions are equivalent:

(i) $\calO$ admits a structure of a semi-valuation ring such that $A$ is $\calO$-isomorphic to the semi-fraction
ring of $\calO$,

(ii) if $f,g\in A$ are co-prime (i.e. $fA+gA=A$) then either $f\in g\calO$ or $g\in f\calO$.
\end{lem}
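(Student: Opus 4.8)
The plan is to prove the two implications separately. The implication from the semi-valuation structure to the divisibility condition is a direct unwinding of the description recalled just before the statement, whereas the converse carries the real content.

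For the first implication, suppose $\calO$ is a semi-valuation ring with semi-fraction ring $A$. Then $A$ is local with maximal ideal $m$, and $\calO=\pi^{-1}(R)$ for the projection $\pi:A\to A/m$ and a valuation ring $R$ of the field $A/m$. Given co-prime $f,g\in A$, the equality $fA+gA=A$ in the \emph{local} ring $A$ forces one of them, say $f$, to lie outside $m$, hence to be a unit. In $A/m$ the valuation-ring property of $R$ gives either $\bar g\bar f^{-1}\in R$ or $\bar f\bar g^{-1}\in R$ (the degenerate case $\bar g=0$ falling under the first); lifting through $\pi$ and using $\calO=\pi^{-1}(R)$ then puts $f^{-1}g$ or $g^{-1}f$ into $\calO$, which says precisely $g\in f\calO$ or $f\in g\calO$.

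For the converse I would first isolate the special case $g=1$ of the hypothesis: since $f$ and $1$ are always co-prime, every $a\in A$ either lies in $\calO$ or is a unit of $A$ with $a^{-1}\in\calO$; in particular every non-unit of $A$ lies in $\calO$. The main obstacle is then to prove that $A$ is local, i.e. that its non-units are closed under addition. For this I would use a normalization trick: if $a,b$ are non-units but $a+b$ were a unit, then $a'=a(a+b)^{-1}$ and $b'=b(a+b)^{-1}$ satisfy $a'+b'=1$ and are therefore automatically co-prime, so the hypothesis applies to the pair $a',b'$. Whichever alternative holds, clearing the unit denominator $a+b$ produces $a=bc$ or $b=ac$ with $c\in\calO$, whence $a+b=b(1+c)$ or $a+b=a(1+c)$; as $a+b$ is a unit this forces $b$, respectively $a$, to be a unit, contradicting the choice of $a,b$. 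Hence the non-units form the maximal ideal $m$ of the local ring $A$.

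It remains to read off the semi-valuation structure. Since every non-unit lies in $\calO$ we have $m\subseteq\calO$, so $\calO=\pi^{-1}(R)$ with $R:=\calO/m$ a subring of the field $A/m$. Applying the $g=1$ case above to units of $A$ lifting the nonzero classes of $A/m$ shows that for each such class either it or its inverse lies in $R$; thus $R$ is a valuation ring of $A/m$ with $\Frac(R)=A/m$. Consequently $\calO$ is composed from the local ring $A$ and the valuation ring $R$, so it carries a semi-valuation structure, and one last application of the $g=1$ case (showing that $\calO_m$ recovers all of $A$) identifies $A$ with the semi-fraction ring of $\calO$. I expect the locality of $A$ to be the only genuinely nontrivial point, the rest being bookkeeping with the definitions.
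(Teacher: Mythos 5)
Your proof is correct and follows essentially the same route as the paper's: establish that $A$ is local by applying the hypothesis to a co-prime pair of non-units whose sum is a unit, deduce $m\subseteq\calO$ from the case $g=1$, and then read off that $R=\calO/m$ is a valuation ring of $A/m$. The only cosmetic differences are that your normalization trick is unnecessary (any non-units $a,b$ with $a+b$ a unit are already co-prime, since $aA+bA$ contains the unit $a+b$, which is exactly how the paper argues), and that you spell out the forward implication and the final identification $A\cong\calO_m$, which the paper treats as obvious.
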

\begin{proof}
We should only prove that (ii) implies (i), since the opposite implication is obvious. We claim that $A$ is a
local ring. Indeed, if it is not local then $A\setminus A^\times$ is not an ideal, hence there exist
non-invertible $f,g$ with invertible $f+g$. But by our assumption either $f\in gA$ or $g\in fA$, hence $f+g$ is
contained in a proper ideal equal to either $fA$ or $gA$, that is an absurd. Let $m\subset A$ be the maximal
ideal, then taking $f\in m$ and $g=1$ and observing that $f$ does not divide $1$ in $\calO$ (and even in $A$),
we deduce that $f\in\calO$. Thus, we proved that $m\subset\calO$, in particular, $\calO$ is the preimage of the
ring $\calO/m\subset A/m$ under the surjection $A\to A/m$. It remains to show that $\calO/m$ is a valuation ring
of $A/m$. For a pair of elements $\tilf,\tilg\in \calO/m$ choose liftings $f,g\in\calO$. Since either $f|g$ or
$g|f$ in $\calO$, it follows that either $\tilf|\tilg$ of $\tilg|\tilf$. Hence $\calO/m$ is a valuation ring,
and we are done.
\end{proof}

\subsection{RZ space of a decomposable morphism}
\label{affsec}

Let $\bfy=(y,R,\phi)$ be a point of $\Val_Y(X)$ and let $S=\Spec(R)$. By the valuative criterion of properness,
$\phi$ factors uniquely through a morphism $\phi_i:Y\to X_i$ for any $Y$-modification $X_i\to X$. Since
$S\times_{X_i}Y$ is a closed subscheme of $S\times_X Y$ by $X$-separatedness of $X_i$, we obtain that $\phi_i$
induces a closed immersion $y\to S\times_{X_i}Y$, and, in particular, $(y,R,\phi_i)$ is an element of
$\Val_Y(X_i)$. It follows that the natural map $\Val_Y(X_i)\to\Val_Y(X)$ is a bijection. So, $\RZ_Y(X)$ and
$\Val_Y(X)$ depend on $X$ and $Y$ only up to replacing $X$ with its $Y$-modification.

Now we will construct a map of sets $\psi:\Val_Y(X)\to\RZ_Y(X)$. For any $i\in I$, let $x_i\in X_i$ be the {\em
center} of $R$ on $X_i$, i.e. the image of the closed point of $S$ under $\phi_i$. Then the family of points
$(x_i)$ defines a point $\bfx\in\gtX$ and we obtain a map $\psi$ as above. For any $i$, $x_i$ is a
specialization of $f_i(y)$, hence we obtain a homomorphism
$\calO_{X_i,x_i}\to\calO_{X_i,f_i(y)}\to\calO_{Y,y}\to k(y)$ whose image lies in $R$ because $x_i$ is the center
of $R$ on $X_i$. Therefore, the image of $\calO_{X_i,x_i}$ in $\calO_{Y,y}$ lies in $\calO_\bfy$, and we obtain
a natural homomorphism $\calO_{\gtX,\bfx}=\injlim\calO_{X_i,x_i}\to\calO_\bfy$.

\begin{prop}
\label{goodcaseprop} Suppose that $f$ is decomposable. Then any point $\bfx\in\gtX$ possesses a preimage
$\bfy=\lam(\bfx)$ in $\Val_Y(X)$ such that the homomorphism $\calO_{\gtX,\bfx}\to\calO_\bfy$ is an isomorphism.
In particular, $\lam$ is a section of $\psi$.
\end{prop}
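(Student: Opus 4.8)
The plan is to reduce to the case of an affine morphism and then to identify the stalk $\calO_{\gtX,\bfx}$ with an explicit semi-valuation ring read off from $Y$. First I would use decomposability to make $f$ affine. Write $f=h\circ g$ with $g\colon Y\to Z$ affine and $h\colon Z\to X$ proper, and let $Z'\subset Z$ be the scheme-theoretic image of $Y$. Then $g$ factors through an affine schematically dominant morphism $Y\to Z'$, while $Z'\to X$ is proper, so $Z'$ is a $Y$-modification of $X$. Since the $Y$-modifications dominating $Z'$ are cofinal among all $Y$-modifications, the projective limit is unchanged and $\RZ_Y(X)=\RZ_Y(Z')$; likewise $\Val_Y(X)=\Val_Y(Z')$ compatibly with $\psi$ (as recorded at the beginning of \S\ref{affsec}). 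Replacing $X$ by $Z'$, I may thus assume that $f$ itself is affine and schematically dominant, so that $\calO_X\into f_*\calO_Y$ and, for every $Y$-modification $X_i$, the morphism $Y\to X_i$ is again affine and schematically dominant.

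Next I would organize the two sheaves at $\bfx$. Writing $x_i=\pi_i(\bfx)$ for the centers, we have $\calO_{\gtX,\bfx}=\injlim_i\calO_{X_i,x_i}$, and since $\eta^{-1}\pi_i^{-1}(U)=f_i^{-1}(U)$ the stalk of meromorphic functions is $\calM_{\gtX,\bfx}=\injlim_i (f_{i*}\calO_Y)_{x_i}$; schematic dominance gives a natural inclusion $\calO_{\gtX,\bfx}\subset\calM_{\gtX,\bfx}$. On spectra this reads $\Spec\calM_{\gtX,\bfx}=\projlim_i\big(Y\times_{X_i}\Spec\calO_{X_i,x_i}\big)$, a cofiltered limit with affine transition maps lying over the local scheme $\projlim_i\Spec\calO_{X_i,x_i}$. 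The goal is to show that this limit is the local scheme of a single point $y\in Y$ and that $\calO_{\gtX,\bfx}$ is exactly the semi-valuation subring of $\calM_{\gtX,\bfx}$ cut out by a valuation of $k(y)$.

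The heart of the argument -- and what I expect to be the main obstacle -- is the identification $\calM_{\gtX,\bfx}\cong\calO_{Y,y}$ for a unique point $y\in Y$, together with the semi-valuation property of $\calO_{\gtX,\bfx}$. For the first part I would show that the $Y$-modifications are rich enough that the preimages $f_i^{-1}(U_i)$ of affine neighborhoods of the centers shrink to a cofinal system of neighborhoods of a single point $y$: both the localization (every germ in $\calO_{Y,y}$ already lives on some $Y\times_{X_i}\Spec\calO_{X_i,x_i}$) and the separation of the remaining points of the fibre are achieved by replacing $X_i$ by the scheme-theoretic image of $Y$ in a suitable $X_i\times_X\bbP^n_X$ and invoking the valuative criterion of properness to locate the center $x_j$ on the refinement. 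This forces $\calM_{\gtX,\bfx}$ to be local with $\calM_{\gtX,\bfx}=\calO_{Y,y}$. For the semi-valuation property I would apply Lemma \ref{valcritlem} with $\calO=\calO_{\gtX,\bfx}$ and $A=\calM_{\gtX,\bfx}$: given co-prime $p,q\in A$, the same scheme-theoretic-image construction applied to the pair $(p,q)$ produces a $Y$-modification on which $p/q$ or $q/p$ becomes regular at the center, whence $p\in q\calO_{\gtX,\bfx}$ or $q\in p\calO_{\gtX,\bfx}$; thus $\calO_{\gtX,\bfx}$ is the preimage in $A=\calO_{Y,y}$ of a valuation ring $R\subset k(y)$.

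Finally I would assemble the triple and check the assertions. Set $\bfy=(y,R,\phi)$, where $R$ is the valuation ring produced above and $\phi\colon\Spec R\to X$ is the morphism determined by $\calO_{X,x}\to\calO_{\gtX,\bfx}\twoheadrightarrow R$; on the generic point $\phi$ restricts to $f(y)$ and on the closed point to the center $x$. The closed-immersion condition $y\to\Spec R\times_X Y$ follows from $\calO_{Y,y}$ being the semi-fraction ring of $\calO_{\gtX,\bfx}$ together with schematic dominance. Compatibility of the centers with the valuative criterion shows that the center of $R$ on each $X_i$ is $x_i$, i.e. $\psi(\bfy)=\bfx$, so $\lam\colon\bfx\mapsto\bfy$ is a section of $\psi$. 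Hence the homomorphism $\calO_{\gtX,\bfx}\to\calO_\bfy$ of the preceding paragraph is an isomorphism: indeed $\calO_\bfy$ is by definition the preimage of $R$ in $\calO_{Y,y}$, which is precisely $\calO_{\gtX,\bfx}$, as required.
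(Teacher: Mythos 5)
Your skeleton follows the paper's own proof quite closely: reduce to $f$ affine and schematically dominant, form the ring $B_\infty=\injlim_i(f_{i*}\calO_Y)_{x_i}$ (the paper's $\injlim B_i$, with $U_i=\Spec(\calO_{X_i,x_i})\times_{X_i}Y$ and $U_\infty=\projlim U_i$), prove a divisibility statement for coprime pairs, feed it into lemma \ref{valcritlem}, and assemble the triple $(y,R,\phi)$. The reduction and the final assembly are essentially correct as sketched (one minor point: the closed-immersion condition for $y\to\Spec(R)\times_X Y$ rests on $X$-separatedness of the $X_i$'s, which makes each $U_i$ closed in $\Spec(\calO_{X_i,x_i})\times_X Y$ and hence $U_\infty$ closed in $\Spec(\calO_{\gtX,\bfx})\times_X Y$; schematic dominance alone does not give this).

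The heart of your argument, however, has a genuine gap. The elements $p,q\in B_\infty$ (and likewise the neighborhoods in your ``separation of points'' step) exist only at a finite level: they are sections of $\calO_Y$ over $Y'=f_i^{-1}(U)$ for some neighborhood $U$ of the center $x_i$ in some $X_i$, not over all of $Y$. Hence there is no morphism from $Y$ to $X_i\times_X\bbP^n_X$ attached to $(p,q)$, and no ``scheme-theoretic image of $Y$'' to take; the construction only produces a $Y'$-modification of $U$, a local object over the base. But to conclude $p\in q\calO_{\gtX,\bfx}$ you need a genuine $Y$-modification of $X$, because $\calO_{\gtX,\bfx}=\injlim_i\calO_{X_i,x_i}$ runs over the global ones. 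Bridging exactly this is the crux of the paper's proof, and it is what your proposal omits: the paper first replaces the image of $Y'$ in $\Proj(A[T_p,T_q])$ by its image in $\Proj(A[T_{ap},T_{aq},T_{bp},T_{bq}])$, where $ap+bq=1$, which realizes the local modification as $\Proj(\oplus_{n=0}^\infty E^n)$ for a finitely generated $A$-submodule $E\subset B$ containing $1$; such data extends globally, since by [EGA I, 6.9.7] $E$ prolongs to a finitely generated $\calO_X$-submodule $\calE\subset f_*(\calO_Y)$ containing the image of $\calO_X$, and $\Proj(\oplus_{n=0}^\infty\calE^n)$ is then a $Y$-modification of $X$ restricting to the local one. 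Without this (or an equivalent device avoiding Nagata compactification, which the paper explicitly flags as the difficulty) neither your locality claim for $\calM_{\gtX,\bfx}$ nor the divisibility hypothesis of lemma \ref{valcritlem} is established. Note also that locality need not be proved separately at all: in the paper it falls out of lemma \ref{valcritlem} once divisibility for coprime pairs is known, so your separate ``separation of the fibre'' step can be dropped.
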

Actually, we will prove in \S\ref{nagchap} that $\psi$ is a bijection (so $\lam$ is its inverse), but the
proposition as it is already covers our applications in \S\ref{applchap}.
\begin{proof}
Factor $f$ into a composition of an affine morphism $Y\to Z$ and a proper morphism $Z\to X$. After replacing $X$
with the scheme-theoretic image of $Y$ in $Z$, we can assume that $f$ is affine. Note that then for any
$Y$-modification $X_i\to X$, the morphism $f_i:Y\to X_i$ is affine. Let $x_i$ be the image of $\bfx$ in $X_i$.
Obviously, the schemes $U_i=\Spec(\calO_{X_i,x_i})\times_{X_i}Y$ are affine. In addition, on the level of sets
each $U_i$ consists of points $y\in Y$ such that $x_i$ is a specialization of $f_i(y)$, the morphisms $U_i\to Y$
are topological embeddings and $\calO_Y|_{U_i}\toisom\calO_{U_i}$. Notice that the schemes $U_i=\Spec(B_i)$ form
a filtered family, hence $U_\infty:=\projlim U_i=\Spec(B_\infty)$, where $B_\infty=\injlim B_i$. By \cite[$\rm
IV_3$, \S8]{ega}, $U_\infty=\cap U_i$ set-theoretically. Since $f_i:Y\to X_i$ is schematically dominant and the
latter property is preserved under (possibly infinite) localizations on the base, the morphism
$U_i\to\Spec(\calO_{X_i,x_i})$ is schematically dominant too. So, for each $i\in I$ we have that
$\calO_{X_i,x_i}\into B_i$, and then an embedding of the direct limits $\calO_{\gtX,\bfx}\into B_\infty$ arises.

\begin{lem}
Suppose that elements $g,h\in B_\infty$ do not have common zeros on $U_\infty$. Then either $g\in
h\calO_{\gtX,\bfx}$ or $h\in g\calO_{\gtX,\bfx}$.
\end{lem}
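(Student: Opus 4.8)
The plan is to verify, for the given co-prime $g,h$, the hypothesis of the valuation criterion \ref{valcritlem} for the pair $\calO_{\gtX,\bfx}\subset B_\infty$ by exhibiting a single $Y$-modification whose center already ``sees'' one of the ratios $g/h$, $h/g$. First I would descend to a finite level: since $B_\infty=\injlim_i B_i$, an identity $1=\alpha g+\beta h$ witnessing co-primeness already holds in some $B_i$, and after enlarging $i$ so that $g,h$ themselves lie in $B_i$ I may assume $g,h\in B_i$ generate the unit ideal of $B_i$. Recall that, after the reduction to the affine case, $f_i\colon Y\to X_i$ is affine and schematically dominant, so that $\calO_{X_i,x_i}\into B_i$, and that $U_i=\Spec(B_i)=Y\times_{X_i}\Spec(\calO_{X_i,x_i})$.

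The core is a relative-$\bbP^1$ construction over the local base $L_i:=\Spec(\calO_{X_i,x_i})$, directly analogous to Raynaud's admissible blow ups. As $g,h$ generate the unit ideal on the affine scheme $U_i$, the pair $[g:h]$ defines an honest $L_i$-morphism $U_i\to\bbP^1_{L_i}$. Let $L_j$ be its scheme-theoretic image, a projective $L_i$-scheme with $U_i\to L_j$ schematically dominant. On the standard charts $D_+(T_g),D_+(T_h)\subset\bbP^1_{L_i}$ the functions $T_h/T_g$ and $T_g/T_h$ pull back along $U_i\to L_j$ to $h/g$ and $g/h$. Since $x_i$ is the closed point of $L_i$ and these two charts cover $\bbP^1_{L_i}$, the center of $\bfx$ lies in $D_+(T_g)$ or in $D_+(T_h)$; as each such chart is open, hence closed under generization, the whole local scheme at the center is contained in one chart, and there the relation $g=c\,h$ (resp.\ $h=c\,g$) holds with $c$ the pullback of $T_g/T_h$ (resp.\ $T_h/T_g$). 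Thus the desired divisibility holds throughout $U_i$, that is, in $B_\infty$, once we know the center sits on a genuine $Y$-modification of $X$.

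This last point is what I expect to be the main obstacle: upgrading the construction over the local base $L_i$ to a $Y$-modification $X_j\to X$ of the kind occurring in the system defining $\gtX$, so that $\calO_{L_j,x_j}=\calO_{X_j,x_j}$ maps into $\calO_{\gtX,\bfx}$. The difficulty is that $[g:h]$ is only a morphism on the open subscheme $f_i^{-1}(V)\subset Y$ over a neighbourhood $V\ni x_i$ where the identity $1=\alpha g+\beta h$ is defined, and it does \emph{not} extend over all of $Y$, since $g,h$ are merely germs of sections of $\calA:=f_{i*}\calO_Y$ along the fibre over $x_i$; there is no global line bundle with two generating sections on $Y$ inducing it.

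I would resolve this in two moves, using the very tools emphasized in this paper. By the spreading-out results of \cite[\S8]{EGAIV}, the finitely many homogeneous equations cutting out $L_j$ in $\bbP^1_{L_i}$ have coefficients in $\calO_{X_i,x_i}=\injlim_V\Gamma(V,\calO_{X_i})$, hence descend to a closed subscheme $X_j^V\subset\bbP^1_V$, proper over $V$, receiving a schematically dominant morphism from $f_i^{-1}(V)$. Then, to secure schematic dominance from all of $Y$ together with properness over $X$, I would pass to the scheme-theoretic image of $Y$ in a proper $X_i$-scheme extending $X_j^V$ — the same scheme-theoretic-image device used in the proofs of Theorem \ref{approxtheor} and Theorem \ref{domth}. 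Since the center lies over $x_i\in V$, where the construction is faithful, the resulting $Y$-modification $X_j$ has a center $x_j$ with $g/h$ or $h/g$ in $\calO_{X_j,x_j}$, and the relation established above then gives $g\in h\calO_{\gtX,\bfx}$ or $h\in g\calO_{\gtX,\bfx}$, as required.
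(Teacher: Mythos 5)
Your opening moves coincide with the paper's: descend the coprimality relation to a finite level $B_i$, shrink to a neighbourhood $V=\Spec(A)$ of $x_i$ over which $g,h$ generate the unit ideal of $B=\Gamma(f_i^{-1}(V),\calO_Y)$, map $Y'=f_i^{-1}(V)$ to $\Proj(A[T_g,T_h])$ by $[g:h]$, take the scheme-theoretic image, and read off the divisibility from whichever standard chart contains the center. You also correctly isolated the real obstacle: this only produces a $Y'$-modification of $V$, whereas membership in $\calO_{\gtX,\bfx}=\injlim\calO_{X_j,x_j}$ requires the local ring of a genuine $Y$-modification of $X$, i.e.\ a proper $X$-scheme receiving a schematically dominant morphism from \emph{all} of $Y$. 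It is your resolution of this obstacle that has a gap. Spreading out the homogeneous equations of the image gives a closed subscheme $X_j^V\subset\bbP^1_V$ proper over $V$, and one can indeed extend it to a proper $X_i$-scheme (say, its closure in $\bbP^1_{X_i}$); but you then invoke ``the scheme-theoretic image of $Y$'' in such an extension, and there is no morphism from $Y$ to it to take an image of: $[g:h]$ is still undefined outside $f_i^{-1}(V)$, since $g$ and $h$ do not exist (let alone stay coprime) there. Taking the image of $Y'$ instead yields a proper $X_i$-scheme that is \emph{not} a $Y$-modification, so its local ring at the center has no map to $\calO_{\gtX,\bfx}$. Thus the difficulty you named reappears intact at the last step; extending a proper $V$-scheme together with the morphism from $Y$ is essentially Nagata compactification, which cannot be used here without making the paper's second proof of Theorem \ref{decompth} (and hence its new proof of Nagata's theorem) circular.

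The paper's fix is different in kind: it spreads out a \emph{module of numerators}, not the equations of the image. Choosing $a,b\in B$ with $ag+bh=1$, it replaces $\Proj(A[T_g,T_h])$ by $P=\Proj(\oplus_{n\ge0}E^n)$, where $E\subset B$ is the $A$-submodule generated by $ag,ah,bg,bh$; via the Segre embedding $P$ still dominates the $\bbP^1$-image, so the chart argument survives. The point of this maneuver is that $E$ contains $1$, so by \cite[6.9.7]{EGAI} it extends to a finitely generated $\calO_X$-submodule $\calE\subset f_*(\calO_Y)$ containing the image of $\calO_X$, and then $\oX=\bfProj(\oplus_{n\ge 0}\calE^n)$ is proper over $X$ and receives a schematically dominant morphism from all of $Y$: the unit section of $\calE$ cuts out an $X$-affine chart $\bfSpec(\cup_{n\ge0}\calE^n)$ through which $Y=\bfSpec(f_*\calO_Y)$ maps globally. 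It is this global unit section --- not the defining equations of the image, which see only what happens over $V$ --- that makes the extension to a $Y$-modification exist; your proposal is missing exactly this ingredient.
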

\begin{proof}
Find $i$ such that $g$ and $h$ are defined and do not have common zeros on $U_i$. Note that $U_i=\cap
f^{-1}(V_j)$, where $V_j$ runs over affine neighborhoods of $x_i$. Hence we can choose a neighborhood
$X'_i=\Spec(A)$ of $x_i$ such that $g,h\in B$ and $gB+hB=1$, where $Y'=\Spec(B)$ is the preimage of $X'_i$ in
$Y$. To ease the notation we will write $X$ and $x$ instead of $X_i$ and $x_i$ (we can freely replace $X$ with
$X_i$ because $\RZ_Y(X)$ remains unchanged). Now, the pair $(g,h)$ induces a morphism $\alp':Y'\to
P':=\Proj(A[T_g,T_h])$, whose scheme-theoretic image $\oX'$ is a $Y'$-modification of $X'$. It would suffice to
extend the $Y'$-modification $\alp':\oX'\to X'$ to a $Y$-modification $\alp:\oX\to X$. Indeed, either $T_g\in
T_h\calO_{\oX',x'}$ or $T_h\in T_g\calO_{\oX',x'}$, where $x'\in\oX'$ is the image of $\bfx$ in $\oX$. So,
existence of $\alp$ would imply that $g|h$ or $h|g$ already in the image of $\calO_{\oX,x'}$ in $B_\infty$,
which is by definition contained in $\calO_{\gtX,\bfx}$.

It can be difficult to extend $\alp'$ (without applying Nagata compactification), but fortunately we can replace
$\oX'$ with any its $Y'$-modification $\oX''$ and it suffices to extend $\oX''\to X'$ to a $Y$-modification of
$X$. Choose $a,b$ such that $ag+bh=1$. Then there exists a natural morphism $\beta':Y'\to
P'':=\Proj(A[T_{ag},T_{ah},T_{bg},T_{bh}])$ which takes $Y'$ to the affine chart on which $T_{ag}+T_{bh}$ is
invertible. We define $\oX''$ to be the scheme-theoretic image of $\beta'$. Since $\beta'$ factors through Segre
embedding $\Proj(A[T_g,T_h])\times\Proj(A[T_a,T_b])\into P''$, we obtain that $\oX''$ is a closed subscheme of
the source which is mapped to $\oX'$ by the projection onto the first factor. In particular, $\oX''$ is a
$Y'$-modification of $\oX'$. We will show that $\oX''\to X'$ extends to a $Y$-modification $\oX\to X$.

Let $E\subset B$ be the $A$-submodule generated by $ag,ah,bg,bh$ and consider the graded algebra
$A_E:=\oplus_{n=0}^\infty E^n$, where $E^n$ is the $n$-th power of $E$ in $B$ and $E^0$ is the image of $A$.
Note that $1\in E$, and we will denote by $1_E$ the associated $1$-graded element of $A_E$. Set $P:=\Proj(A_E)$
and observe that the affine chart corresponding to $1_E$ is $P_1=\Spec(\cup_{n=0}^\infty E^n)$ (where the union
is taken inside of $B$). Clearly, $P$ is a closed subscheme in $P''$ and the morphism $Y\to P''$ factors through
$P_1$. In particular, $\oX''$ is the schematical image of $Y\to P$, and the latter coincides with the
schematical closure of $P_1$ because the morphism $Y\to P_1$ is schematically dominant by injectivity of the
homomorphism $\cup_{n=0}^\infty E^n\to B$. By \cite[6.9.7]{egaI}, $E$ can be extended to a finitely generated
$\calO_X$-submodule $\calE\subset f_*(\calO_Z)$, and replacing $\calE$ by $\calE+\calO_X$ we achieve in addition
that $\calE$ contains the image of $\calO_X$ in $f_*(\calO_Y)$. Let $\calE^n$ be the $n$-th power of $\calE$ in
the sheaf of $\calO_X$-algebras $f_*(\calO_Y)$ (so, $\calE^0$ is the image of $\calO_X$) and form the graded
$\calO_X$-algebra $\calA_\calE:=\oplus_{n=0}^\infty\calE^n$. Then exactly the same computation as was used above
shows that the schematical closure of $\bfSpec(\cup_{n=0}^\infty\calE^n)$ in $\bfProj(\calA_\calE)$ is a
$Y$-modification of $X$, which we denote $\oX$. Since $\oX\to X$ obviously extends $\oX''\to X'$, we are done.
\end{proof}

The above lemma combined with Lemma \ref{valcritlem} provides $\calO_{\gtX,\bfx}$ with a semi-valuation ring
structure such that $B_\infty$ is its semi-fraction ring. In particular, $B_\infty$ is a local ring and so
$U_\infty$ possesses a unique closed point $y$. Thus, $B_\infty=\calO_{Y,y}$, its subring $\calO_{\gtX,\bfx}$
contains $m_y$ and $R:=\calO_{\gtX,\bfx}/m_y$ is a valuation ring of $k(y)$. Define $\phi:S=\Spec(R)\to X$ as
the composition of the closed immersion $S\to\Spec(\calO_{\gtX,\bfx})$ with the natural morphism
$\Spec(\calO_{\gtX,\bfx})\to X$. Since $\calO_{\gtX,\bfx}$ is composed from $\calO_{Y,y}$ and $R$, the triple
$\bfy:=(y,R,\phi)$ is a candidate for being $\lam(\bfx)$ and it only remains to check that $y\to S\times_XY$ is
a closed immersion (and so $\bfy$ is indeed an element of $\Val_Y(X)$).

For any $i$, $U_i=\Spec(\calO_{X_i,x_i})\times_{X_i} Y$ is a closed subscheme of $\Spec(\calO_{X_i,x_i})\times_X
Y$, hence $U_\infty\toisom\projlim_{i\in I}U_i$ is a closed subscheme of $$\Spec(\calO_{\gtX,\bfx})\times_X
Y\toisom\projlim_{i\in I}\Spec(\calO_{X_i,x_i})\times_X Y$$ Since $y$ is closed in $U_\infty$, we obtain that
the morphism $y\to\Spec(\calO_{\gtX,\bfx})\times_X Y$ is a closed immersion. Hence the morphism from $y$ to a
closed subscheme $S\times_X Y$ of $\Spec(\calO_{\gtX,\bfx})\times_X Y$ is a closed immersion too and we are
done.
\end{proof}

\subsection{Applications}
\label{applicsec} A preliminary description of relative Riemann-Zariski spaces obtained in the previous section,
suffices for some applications. Assume we are given a qcqs scheme $S$ with a schematically dense quasi-compact
subset $U$ (i.e. any neighborhood of $U$ is schematically dense) which is closed under generalizations. An
$S$-scheme $X$ is called {\em $U$-admissible} if the preimage of $U$ in $X$ is schematically dense. By a {\em
$U$-\'etale covering} we mean a separated finite type morphism $\phi:S'\to S$ such that $\phi$ is \'etale over
$U$, $S'$ is $U$-admissible, and for any valuation ring $R$ any morphism $\Spec(R)\to S$ taking the generic
point to $U$ lifts to a morphism $\Spec(R')\to S'$ where $R'$ is a valuation ring dominating $R$ and such that
$\Frac(R')/\Frac(R)$ is finite. (Actually those are finite type $h$-covers of $S$ which are \'etale over $U$.)
Note that in \cite{BLR} one considers a more restrictive class of coverings, namely $U$-\'etale maps $S'\to S$,
which split to a composition of a surjective flat $U$-\'etale morphism and a $U$-modification. However, it
follows from the flattening theorem \cite[5.2.2]{RG} of Raynaud-Gruson that the latter class of coverings is
cofinal in ours.

In order to make use of Riemann-Zariski spaces we have first to establish some properties of schemes over
semi-valuation rings. So, let $\calO$ be a semi-valuation ring with semi-fraction ring $A$ and let $m$ be the
maximal ideal of $A$. Recall that $A=\calO_m$, $R:=\calO/m$ is the valuation ring in $K:=A/m$, the scheme
$S=\Spec(\calO)$ is covered by pro-open subscheme $U=\Spec(A)$ (i.e. $U$ is the intersection of open subschemes)
and closed subscheme $T=\Spec(R)$, and the intersection $U\cap T$ is a single point $\eta=\Spec(K)$, which is
the generic point of $T$ and the closed point of $U$. Note that in some sense $S$ is glued from $U$ and $T$
along $\eta$, for example, there is a bi-Cartesian square
$$
\xymatrix{
\eta \ar[d]\ar[r] & U\ar[d]\\
T\ar[r] & S}
$$

Next we will study how $U$-admissible $S$-schemes (resp. quasi-coherent $\calO_S$-modules) can be glued from
$T$-schemes and $U$-schemes (resp. modules), and we will call such gluing $(U,T)$-descent. Given a
quasi-coherent $\calO_S$-module $M$, which we identify with an $\calO$-module, set $M_U=M\otimes_\calO A$,
$M_T=M\otimes_\calO R=M/mM$ and $M_\eta=M\otimes_\calO K$. We say that $M$ is $U$-admissible if the localization
homomorphism $M\to M_U$ is injective. Note that any $\calO$-module $M$ defines a descent datum consisting of
$M_U,M_T$ and an isomorphism $\phi_M:M_U\otimes_AK\toisom M_T\otimes_R K$, and a similar claim holds for
$S$-schemes. The corresponding categories of descent data are defined in an obvious way, and, naturally, we have
a $(U,T)$-descent lemma below. Slightly more generally, we fix a qcqs $U$-admissible $S$-scheme $\oS$ with
$\oU=U\times_S\oS$, $\oT=T\times_S\oS$ and $\oeta=\eta\times_S\oS$ and we will glue objects defined over $\oU$
and $\oT$ along their restrictions over $\oeta$. For example, an $\calO_\oS$-module $\calM$ induces a descent
data $\phi_\calM:\calM_\oU|_\oeta\toisom\calM_\oT|_\oeta$. If we want to stress the choice of $\oS$ we will call
such gluing $(\oU,\oT)$-descent. For an $\oS$-scheme $X$ we will use the notation $X_U=X\times_\oS\oU$ (and so
$X_U\toisom X\times_SU$), $X_T=X\times_\oS\oT$ and $X_\eta=X\times_\oS\oeta$.

\begin{lem} \label{gluelem}
Keep the above notation.

(i) The natural functor from the category of $U$-admissible quasi-coherent $\calO_\oS$-modules (resp.
$\calO_\oS$-algebras) $\calM$ to the category of descent data $(\calM_\oU,\calM_\oT,\phi_\calM)$ with
quasi-coherent $\calO_\oU$-module (resp. $\calO_\oU$-algebra) $\calM_\oU$ and quasi-coherent $\oeta$-admissible
$\calO_\oT$-module (resp. $\calO_\oT$-algebra) $\calM_\oT$ is an equivalence of categories.

(ii) The $(\oU,\oT)$-descent is effective on $\oU$-flat $\oS$-projective schemes with fixed relatively ample
sheaves. More concretely, assume that we are given a descent datum
$((X_U,\calL_U),(X_T,\calL_T),(\phi_X,\phi_\calL))$, where $f_U:X_U\to\oU$ and $f_T:X_T\to\oT$ are projective
morphisms with relatively ample invertible modules $\calL_U$ and $\calL_T$, respectively, $f_U$ is flat, $X_T$
is $\eta$-admissible, $\phi_X:X_U\times_U\eta\toisom X_T\times_T\eta$ and $\phi_\calL$ is an isomorphism between
the restrictions of $\calL_U$ and $\calL_T$ on the $\eta$-fibers which agrees with $\phi_X$. Then there exists a
projective morphism $f:X\to\oS$ with a relatively ample $\calO_X$-module $\calL$ whose restriction over $\oU$
and $\oT$ give rise to the above descent datum.

(iii) A qcqs $U$-admissible $S$-scheme $X$ is of finite type if and only if $X_U$ and $X_T$ are so. If in
addition $X\times_SU\to U$ is flat and finitely presented then $X\to S$ is flat and finitely presentated.
\end{lem}
\begin{proof}
The claim of (i) is local on $\oS$, so we can assume that $\oS$ is affine. Then $\calO_\oS$, $\calO_\oU$ or
$\calO_\oT$-modules can be viewed simply as $\calO$, $A$ or $R$-modules, and this reduces our problem to the
case when $\oS=S$. In particular, we will now denote the modules as $M$, $M_T$, etc. Next we note that $mM_U=mM$
because $mA=m$, and hence $M_T=M/mM$ embeds into $M_\eta=M_U/mM_U$. So, $M_T$ is $\eta$-admissible and the
embedding $M\into M_U$ identifies $M$ with the preimage of $M_T$ under the projection $M_U\to M_\eta$. In
particular, an exact sequence $0\to M\to M_U\oplus M_T\to M_\eta\to 0$ arises. Conversely, given a descent datum
as in (i), we can define an $\calO$-module $M=\Ker(M_U\oplus M_T\to M_\eta)$, and one easily sees that $M$ is
actually the preimage of $M_T\subset M_\eta$ under the projection $M_U\to M_U/mM_U\toisom M_\eta$ and hence
$M_U$ and $M_T$ are the base changes of this $M$. We constructed maps from $\calO_S$-modules to descent data and
vice versa, and one immediately sees that these maps extend to functors. Then it is obvious from the above that
these functors are equivalences of categories which are inverse one to another.

To prove (ii) we find sufficiently large $n$ so that the $n$-th tensor powers of the initial sheaves induce
closed immersions $X_U\to\bfP((f_U)_*(\calL^{\otimes n}_U))$ and $X_T\to\bfP((f_T)_*(\calL^{\otimes n}_T))$ into
the associated projective fibers. Moreover, the higher direct images of $\calL^{\otimes n}_U$ vanish for large
$n$ and then $(f_\eta)_*(\calL^{\otimes n}_\eta)\toisom((f_U)_*(\calL^{\otimes n}_U))_\eta$ by the theorem on
base changes and direct images, see \cite[III.12.9]{Har}. By part (i) the sheaves $(f_U)_*(\calL^{\otimes n}_U)$
and $(f_T)_*(\calL^{\otimes n}_T)$ glue along $(f_\eta)_*(\calL^{\otimes n}_\eta)$ to an $\calO_\oS$-module
$\calM$ and so $\bfP:=\bfP(\calM)$ is glued from $\bfP_U:=\bfP((f_U)_*(\calL^{\otimes n}_U))$ and
$\bfP_T:=\bfP((f_T)_*(\calL^{\otimes n}_T))$ along $\bfP((f_\eta)_*(\calL^{\otimes n}_\eta))$. In particular,
the closed subschemes $X_U\into\bfP_U$ and $X_T\into\bfP_T$ glue to a closed subscheme $i:X\into\bfP$ with a
relatively very ample sheaf $\calK:=i^*(\calO_\bfP(1))$. Note that $\calK$ is glued from $\calL_U^{\otimes n}$
and $\calL_T^{\otimes n}$. Finally, the modules $\calL_U$ and $\calL_T$ glue to an invertible $\calO_X$-sheaf
$\calL$ with $\calL^{\otimes n}\toisom\calK$ by $(X_U,X_T)$-descent of modules, which was established in (i). In
particular, $\calL$ is relatively ample.

The first assertion of (iii) is exactly Step 2 from the proof of \cite[2.5.3]{Temst}. % easy, so we skip it; T |-> R
So, let us assume that $X\times_\oS\oU\to\oU$ is flat and finitely presented (in addition to the assumption that
$X$ is of finite type over $\oS$ and $\oU$-admissible). The claim is local on $X$ so we can assume that
$X=\Spec(C)$ is affine. Note also that $X_T$ is $\eta$-admissible, so $C/mC$ embeds into $(C/mC)\otimes_RK$ and
then $C/mC$ is flat and finitely presented over $R$ by \cite[3.5.1]{Temst}. We first deal with finite
presentation, so fix an epimorphism $\phi:\calO[T]\to C$ with $T=(T_1\. T_k)$ and let us prove that its kernel
$I$ is finitely generated. Localizing at $m$ we obtain an epimorphism $\phi\otimes_\calO A:A[T]\to B=C_m$ with
kernel $J=I_m$. Then $B$ is $A$-flat by our assumption and we claim that this implies that $J\cap m[T]=mJ$.
Indeed, if $x$ is contained in $J\cap m[T]$ but not in $mJ$ then it reduces to a non-zero element $\tilx$ in the
kernel of $J/mJ\to A[T]/m[T]$. However, this kernel is an epimorphic image of $\Tor_1^{A}(B,m)=0$ and hence
$\tilx=0$. By finite presentation of $A\to B$ we have that $J=\sum_{i=1}^nf_iB$ and multiplying $f_i$'s by
elements of $\calO\setminus m$ we can achieve that $f_i\in I$ and so they generate an ideal $I':=\sum_{i=1}^n
f_iC\subset I$. Since $m$ is $(\calO\setminus m)$-divisible, $mJ=mI'\subset I'$ and hence $\oI:=I\cap
m[T]\subset mJ\subset I'$. Note that $I/\oI$ is the kernel of $\phi\otimes_\calO R:R[T]\to C/mC$, and so is
finitely generated over $R[T]$ (and hence over $\calO[T]$). Choose any finite set of generators $\tilg_1\.
\tilg_l\in I/\oI$, lift each $\tilg_j$ to $g_j\in I$ and consider the ideal $I''=(I',g_1\. g_l)$ in $C[T]$. Then
$I''$ contains $\oI$ and $I''/\oI$ contains $I/\oI$, and so $I=I''$ is finitely generated.

Finally, let us show that $X$ is $S$-flat. We already know that $X$ is of finite presentation over $S$,
therefore the flattening theorem of Raynaud-Gruson \cite[5.2.2]{RG} asserts that $X$ can be flattened by
performing a $U$-modification (and even a $U$-admissible blow up) on $S$ and replacing $X$ with its strict
transform. However, $S$ has no non-trivial $U$-modifications because $T$ (being the spectrum of a valuation
ring) is the only modification of itself. Thus, $X$ has to be $S$-flat and we conclude the proof.
\end{proof}

In the first version of the paper, the lemma was formulated in a larger (and incorrect) generality, as was
pointed out by D. Rydh. So, let us discuss briefly true and false generalizations.

\begin{rem}
(i) Lemma \ref{gluelem}(i) implies that descent data of the form $X_U\times_U\eta\toisom X_T\times_T\eta$ is
always effective for $U$-admissible $\oS$-affine schemes. Some examples show that for general $U$-admissible
schemes the descent of this type is not effective, though it exists as an algebraic space. More generally, Rydh
recently showed in \cite[\S6]{Rydh} that general descent of this type can be made in the category of stacks with
quasi-finite diagonal.

(ii) Also, Rydh observed that the flatness assumption in Lemma \ref{gluelem}(iii) is essential for finite
presentation. Without flatness finite presentation can be lost after gluing even in the case when $\calO$ is a
height two valuation ring composed from DVR's $A$ and $R$ and $X$ is a (non-reduced) closed subscheme in $S$.
\end{rem}

We assume again that $S$ is a qcqs scheme with a schematically dense quasi-compact subset $U$ which is closed
under generalizations. We will prove a stable modification theorem which strengthens its analog from
\cite{Temst}, and we refer to the introduction of loc.cit. for terminology. Our strengthening is in imposing
natural restrictions on the base change required in order to construct a stable modification. It is reasonable
to expect that in some sense one can preserve the locus $U$ of $S$ over which the given curve is already
semi-stable. Since already when $U$ is the generic point of an integral base scheme $S$ one has to allow its
finite \'etale coverings (i.e. one has to allow separable alterations rather then modifications), it seems that
one cannot hope for something more restrictive than admitting general $U$-\'etale coverings of the base.

\begin{theor}
\label{stabmodtheorU} Let $(C,D)$ be an $S$-multipointed curve with semi-stable $U$-fibers. Then there exists a
$U$-\'etale covering $S'\to S$ such that the curve $(C,D)\times_S S'$ admits a stable $U$-modification.
\end{theor}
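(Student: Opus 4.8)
The plan is to reduce the global statement to a pointwise statement over the relative Riemann-Zariski space $\gtS=\RZ_U(S)$, to solve it there using the semi-valuation ring structure of the local rings together with the classical valuative stable reduction theorem, and finally to spread out and patch using quasi-compactness of $\gtS$. First I would record that $\gtS$ is quasi-compact, being a cofiltered limit of the underlying spectral spaces of the $U$-modifications of $S$ along proper (hence spectral) transition maps. By Proposition \ref{goodcaseprop} the local ring $\calO_\bfx=\calO_{\gtS,\bfx}$ at each point $\bfx\in\gtS$ is a semi-valuation ring. Writing $A$ for its semi-fraction ring, $R=\calO_\bfx/m$ for the associated valuation ring, $K=A/m$ and $\eta=\Spec(K)$, the scheme $\Spec(\calO_\bfx)$ decomposes as the pro-open part $U_\bfx=\Spec(A)$ glued to the closed part $T_\bfx=\Spec(R)$ along $\eta$, exactly as in the bi-Cartesian square preceding Lemma \ref{gluelem}.

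Next I would solve the problem over $\Spec(\calO_\bfx)$. Since $U_\bfx$ lies over $U$, the curve $C$ pulled back to $U_\bfx$ is already semi-stable, so only the closed direction $T_\bfx$ needs work. There $T_\bfx=\Spec(R)$ is the spectrum of a valuation ring and the generic fibre $C_\eta$ is semi-stable, so the valuative stable reduction theorem (as in \cite{Temst}) produces a finite separable extension $K'/K$, a valuation ring $R'\subset K'$ dominating $R$, and a stable model $\calC_{T'}$ of $C_{K'}$ over $T'=\Spec(R')$. The extension $K'/K$ is $\eta$-etale, so after pulling the semi-stable curve over $U_\bfx$ back along the corresponding covering I can feed $\calC_{T'}$ and this pullback into the gluing equivalence of Lemma \ref{gluelem}(ii); the descent datum over $\eta$ is well defined because the stable model of $C_{K'}$ over $\eta$ is unique. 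This yields a stable $U$-modification of $C$ over a semi-valuation ring obtained from $\calO_\bfx$ by a $U$-etale base change.

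I would then spread this local solution out. By the finite-presentation criterion of Lemma \ref{gluelem}(iii) together with the approximation machinery of \cite[\S8]{EGAIV}, the stable $U$-modification over $\Spec(\calO_\bfx)$ is already defined over a $U$-etale covering $S'_i\to S_i$ of some $U$-modification $S_i$ of $S$, hence over a quasi-compact open neighbourhood of $\bfx$ in $\gtS$. Covering the quasi-compact space $\gtS$ by finitely many such neighbourhoods and refining, I obtain on each piece a $U$-etale covering together with a stable $U$-modification of the pulled-back curve.

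The main obstacle, and the last step, is the patching, which is also where the control on the base change must be won. On the overlaps of the finitely many pieces the two stable $U$-modifications must be identified, and here I would invoke the rigidity of stable marked curves: a stable $U$-modification of a given curve is unique up to unique isomorphism, so the local solutions agree canonically on overlaps. Combined with the effectivity of descent for $U$-admissible schemes (Lemma \ref{gluelem}) and the flattening theorem \cite[5.2.2]{RG} — which lets me replace the abstract valuative coverings by honest $U$-etale coverings of the split form — the local data glue to a single $U$-etale covering $S'\to S$ carrying a stable $U$-modification $C_\st$ of $C\times_S S'$. The delicate point throughout is the bookkeeping on the base change: one must check that the separable residue-field extensions at the generic points and the valuation-ring alterations in the closed direction assemble into one separated, $U$-admissible morphism satisfying the valuative lifting property, i.e. into a genuine $U$-etale covering.
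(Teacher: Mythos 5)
Your overall route is the same as the paper's: quasi-compactness of $\gtS$, Proposition \ref{goodcaseprop} to identify the stalks as semi-valuation rings, the valuation-ring case of \cite{Temst} together with an $A$-etale local lift $A'$ of the finite separable extension $K'/K$, Lemma \ref{gluelem}(ii) to glue over the semi-valuation ring, and approximation plus quasi-compactness to spread out. The genuine gap is in your final patching step, the one you yourself call the main obstacle. The finitely many local solutions live over \emph{different} $U$-etale schemes $S'_{i(\bfx_j)}$, and on an overlap of their images there is no canonical identification of $S'_{i(\bfx_j)}$ with $S'_{i(\bfx_k)}$: they are genuinely different schemes (for instance they realize different residue-field extensions at points of the overlap). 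Rigidity of stable modifications identifies two stable $U$-modifications of the same curve over the \emph{same} base; it says nothing about identifying the bases themselves, so ``the local solutions agree canonically on overlaps'' does not parse. Nor can Lemma \ref{gluelem} be invoked for this: it is a descent statement for the specific decomposition of $\Spec(\calO)$ into $U=\Spec(A)$ and $T=\Spec(R)$ glued along $\eta$, not a gluing device over an open cover. The paper bypasses the entire issue by an observation you missed: a $U$-etale covering need not be connected, so one simply takes the disjoint union $S'=\sqcup_{j=1}^n S'_{i(\bfx_j)}$. After using Raynaud--Gruson flattening to make each $h_{\bfx_j}$ flat, hence open, and enlarging indices so that all local solutions sit over one $U$-modification $S_i$ of $S$ whose images cover $S_i$, this disjoint union is a $U$-etale covering of $S$, and the stable modifications over the separate pieces trivially constitute a stable $U$-modification of $C\times_S S'$ --- no identification on overlaps is ever needed.

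There is also a smaller gap in your local step. Lemma \ref{gluelem}(ii) only produces a qcqs $U$-admissible scheme $C'$ over the semi-valuation ring from $C_U$ and $\calC_{T'}$; you still must show that $C'$ is a \emph{curve}, i.e. flat and finitely presented over the base. Finite presentation is Lemma \ref{gluelem}(iii), but flatness requires the paper's separate argument: $\Spec(\calO')$ admits no non-trivial $U$-modifications (since $T'=\Spec(R')$ is the only modification of itself), so the Raynaud--Gruson flattening of $C'$, which a priori requires a $U$-admissible blow up of the base, forces $C'$ to be flat as it stands. Without this verification you have not actually produced a stable $U$-modification over the semi-valuation ring, and the spreading-out step has nothing to approximate.
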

\begin{proof}
Step 1. {\it The theorem holds over a semi-valuation ring $\calO$.} More concretely, throughout Step 1 we assume
that $\calO$ is composed from a local ring $(A,m)$ and a valuation ring $R$ of $K=A/m$, $S=\Spec(\calO)$ and
$U=\Spec(A)$. Set also $T=\Spec(R)$ and $\eta=\Spec(K)$. By \cite[1.5]{Temst}, the theorem is known in the case
of a valuation ring, i.e. the case when $m=0$. Thus, there exists a finite separable extension $K'/K$ with a
valuation ring $R'$ lying over $R$ and such that $(C,D)\times_S T'$ admits a stable modification, where
$T'=\Spec(R')$. Lift the extension $K'/K$ to a finite \'etale extension of local rings $A'/A$, and let $\calO'$
be the semi-valuation ring composed from $A'$ and $R'$. We will show that the stable modification exists over
$\calO'$, but let us explain first how this concludes the Step. Clearly, $R'=\cup R_i$ where $R_i$'s are
finitely generated $R$-subalgebras of $R'$ such that $\Frac(R_i)=K'$. Therefore $\calO'=\cup \calO_i$ where
$\calO_i$ is the preimage of $R_i$ in $A'$. It remains to note that for any $i$ we have that
$\Spec(\calO_i)\times_SU\toisom\Spec(A')$ is \'etale over $U$, and by approximation the stable modification
exists already over some $\calO_i$.

Now we can work over $\calO'$ and to simplify the notation we replace $\calO$ by $\calO'$ achieving that already
$(C_T,D_T):=(C,D)\times_S T$ admits a stable modification $(\oC_T,\oD_T)$. By \cite[1.1]{Temst} there is a
canonical $C_T$-ample sheaf on $\oC_T$, namely the sheaf $\calL_T:=\omega_{(\oC_T,\oD_T)/T}$. Set also
$\calL_U:=\omega_{(C_U,D_U)/U}$ and note that these sheaves agree over $\eta$ because the formation of
$\omega$'s commutes with base changes (see \cite[\S1]{Temst}). By Lemma \ref{gluelem}(ii) applied with $\oS=C$,
we can glue $(C_U,\calL_U)$ and $(\oC_T,\calL_T)$ to a $U$-modification $\oC\to C$. In addition, $\oC$ is flat
and finitely presented over $S$ by Lemma \ref{gluelem}(iii). Clearly, the closed subschemes $D_U\into C_U$ and
$\oD_T\into\oC_T$ glue to a closed subscheme $\oD\into\oC$, and checking the $S$-fibers we obtain that
$(\oC,\oD)$ is a stable $U$-modification of $(C,D)$.

Step 2. {\it The general case.} Since $(C,D)$ is semi-stable over an open subscheme of $S$, we can enlarge $U$
to an open schematically dense qcqs subscheme. Note that by noetherian approximation there exists a scheme $S'$
of finite type over $\bfZ$ with a morphism $S\to S'$ such that $U$ and $(C,D)$ are induced from a schematically
dense open subscheme $U'\into S'$ and a multipointed curve $(C',D')\to S'$. Then it suffices to solve our
problem for $S',U'$ and $(C',D')$, so we can assume that $S$ is of finite type over $\bfZ$. By
\cite[3.3.1]{Temst}, $\gtS=\RZ_U(S)$ is a qcqs topological space. For any point $\bfx=(y,R,\phi)\in\gtS$, set
$S_\bfx=\Spec(\calO_{\gtS,\bfx})$, $U_\bfx=\Spec(\calO_{Y,y})$ and $(C_\bfx,D_\bfx)=(C,D)\times_S S_\bfx$. Since
the embedding $U\into S$ is obviously decomposable, Proposition \ref{goodcaseprop} implies that
$\calO_{\gtS,\bfx}$ is a semi-valuation ring with the semi-fraction ring $\calO_{Y,y}$. By Step 1, there exists
a $U_\bfx$-\'etale covering $S'_\bfx\to S_\bfx$ such that the $S'_\bfx$-multipointed curve
$(C_\bfx,D_\bfx)\times_{S_\bfx}S'_\bfx\toisom (C,D)\times_S S'_\bfx$ admits a stable $U'_\bfx$-modification for
$U'_\bfx=U_\bfx\times_{S_\bfx}S'_\bfx$. Note also that the morphism $S'_\bfx\to S_\bfx$ is flat and finitely
presented by Lemma \ref{gluelem}(iii).

Consider the family $\{S_i\}_{i\in I}$ of all $U$-modifications of $S$, and let $x_i$ be the center of $\bfx$ on
$S_i$. Recall that $\calO_{\gtS,\bfx}=\injlim\calO_{S_i,x_i}$. By approximation, there exists $i=i(\bfx)$ and a
flat finitely presented $U$-\'etale morphism $h_\bfx:S'\to S_i$ such that $x_i$ lies in its image and
$(C,D)\times_S S'_i$ admits a stable $U$-modification. By flatness of $h_\bfx$, $h_\bfx(S')$ is open in $S_i$,
and hence its preimage in $\gtS$ is an open neighborhood of $\bfx$. Note that in the sequel we can replace $i$
by any larger index $k$ simply by replacing $h_\bfx$ by its base change with respect to the $U$-modification
$S_k\to S_i$. Since $\gtS$ is quasi-compact, there exist finitely many points $\bfx_j$, $1\le j\le n$ with
associated flat morphisms $h_j:S'_j\to S_{i_j}$ so that $\gtS$ is covered by the preimages of the sets
$h_j(S'_j)$. By the above argument we can enlarge all indexes so that $i:=i_1=\dots =i_n$. The open subschemes
$h_j(S'_j)\into S_i$ with $1\le j\le n$ cover $S_i$ because their preimages cover $\gtS$, and so
$S':=\sqcup_{j=1}^n S'_j$ is a flat cover of $S_i$. In particular, $S'$ is a $U$-\'etale covering of $S$ over
which $(C,D)$ possesses a stable $U$-modification.
\end{proof}

A scheme version of the reduced fiber theorem of Bosch-L\"utkebohmert-Raynaud \cite[2.1']{BLR}, can be proved
absolutely similarly.

\begin{theor}
\label{redfibthU} Let $X\to S$ be a schematically dominant finitely presented morphism whose $U$-fibers are
geometrically reduced. Then there exists a $U$-\'etale covering $S'\to S$ and a finite $U$-modification $X'\to
X\times_S S'$ such that $X'$ is flat, finitely presented and has reduced geometric fibers over $S'$.
\end{theor}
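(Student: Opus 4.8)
The plan is to repeat, nearly verbatim, the two-step argument proving Theorem \ref{stabmodtheorU}, feeding in the reduced fibre theorem \cite[2.1']{BLR} over a valuation ring in place of the stable reduction input and replacing ``stable $U$-modification'' throughout by ``finite $U$-modification with reduced geometric fibres''. As a preliminary reduction, exactly as in Theorem \ref{stabmodtheorU} I would enlarge $U$ to an open, schematically dense, qcqs subscheme over which the fibres of $X\to S$ remain geometrically reduced (geometric reducedness of the fibres being an open condition once we work over the noetherian approximation), and then use noetherian approximation to descend $(S,U,X)$ and assume that $S$ is of finite type over $\bfZ$. In particular $\gtS=\RZ_U(S)$ is then a qcqs space by \cite[2.2.1]{Temst}.

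\emph{Step 1 (globalisation).} For a point $\bfx=(y,R,\phi)\in\gtS$ the open immersion $U\into S$ is decomposable, so Proposition \ref{goodcaseprop} identifies $\calO_{\gtS,\bfx}$ with a semi-valuation ring whose semi-fraction ring is $\calO_{U,y}$. Granting the semi-valuation case (Step 2), over $S_\bfx=\Spec(\calO_{\gtS,\bfx})$ one obtains a $U_\bfx$-etale covering carrying the desired finite modification with reduced geometric fibres. I would then run the patching argument of Theorem \ref{stabmodtheorU} without change: approximate to obtain, over some $U$-modification $S_i$ of $S$, a $U$-etale morphism $h_\bfx:S'_i\to S_i$ whose image contains the center of $\bfx$ and over which $X$ acquires the required finite modification; make $h_\bfx$ flat (hence of open image) by the Raynaud--Gruson flattening theorem \cite[5.2.2]{RG} after a further $U$-modification of $S_i$; and, using quasi-compactness of $\gtS$, select finitely many points $\bfx_1\.\bfx_n$, raise their indices to a common $i$, and take $\sqcup_{j}S'_{i(\bfx_j)}$ as the sought $U$-etale covering of $S$.

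\emph{Step 2 (the semi-valuation ring case).} Write $\calO$ as composed from a local ring $(A,m)$ and a valuation ring $R\subset K=A/m$, and set $S=\Spec(\calO)$, $U=\Spec(A)$, $T=\Spec(R)$, $\eta=\Spec(K)$. The generic fibre $X_\eta$ is an $\eta$-fibre, hence geometrically reduced; let $X_T$ be the schematic closure of $X_\eta$ in $X\times_S T$, a flat finitely presented $R$-model of $X_\eta$. Applying \cite[2.1']{BLR} over the valuation ring $R$ yields a finite \emph{separable} extension $K'/K$ with a valuation ring $R'\supset R$ and a finite modification $X'_{T'}\to X_T\times_T T'$ over $T'=\Spec(R')$ that is flat, finitely presented and has reduced geometric fibres. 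Taking $A'$ to be an $A$-etale local ring with $A'/m\toisom K'$ and $\calO'$ composed from $A'$ and $R'$, I replace $\calO$ by $\calO'$; this replacement is precisely a $U$-etale covering, so afterwards I may assume that $X_T$ itself carries a finite $T$-modification $X'_T$ that is flat, finitely presented, has reduced geometric fibres over $T$, and is an isomorphism over $\eta$ onto $X_\eta$. Since $X'_T\to X_T$ is an isomorphism over $\eta$, the pair $(X_U,X'_T)$ inherits the descent datum of $X_\eta$, so Lemma \ref{gluelem}(ii) glues it to a $U$-admissible $S$-scheme $X'$ equipped with a morphism $X'\to X$ which is the identity over $U$ and the finite modification $X'_T\to X_T$ over $T$; by functoriality of the gluing this is a finite $U$-modification. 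Lemma \ref{gluelem}(iii) shows $X'$ is finitely presented over $S$; its geometric fibres are reduced because over $U$ they are the geometrically reduced fibres of $X$ and over $T$ they are reduced by construction, and $U\cup T=S$. Finally, flatness of $X'$ over $S$ follows as in Theorem \ref{stabmodtheorU}: the base $S=\Spec(\calO)$ admits no nontrivial $U$-modification (since $T=\Spec(R)$ is its own only modification), while by \cite[5.2.2]{RG} flatness of $X'$ could be achieved after a $U$-admissible blow up of $S$; as no nontrivial such blow up exists, $X'$ is already $S$-flat.

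The main obstacle is the valuation-ring base case and its compatibility with the $U$-etale formalism. On the one hand, the $U$-etale framework forces the generic extension $K'/K$ to be separable (so that $A\to A'$ can be chosen etale), so one must invoke \cite[2.1']{BLR} in a form producing a separable $K'/K$, exactly as the stable case used a finite separable extension; on the other hand, one must verify that finiteness of the modification, flatness over $S$ and reducedness of the special geometric fibres all glue correctly against the unchanged $X_U$. Once this local input is secured, the remaining globalisation and gluing are the verbatim analogues of the corresponding steps in Theorem \ref{stabmodtheorU}.
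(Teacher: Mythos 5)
Your overall architecture is exactly the paper's: the paper's proof of Theorem \ref{redfibthU} literally says to handle the valuation-ring case first, then "act as in Step 2" of Theorem \ref{stabmodtheorU} (gluing over a semi-valuation ring via Lemma \ref{gluelem} plus the Raynaud--Gruson flattening trick against the absence of nontrivial $U$-modifications of $\Spec(\calO)$), and then "repeat Step 1" (globalization over $\RZ_U(S)$ by quasi-compactness). All of that you reproduce correctly. The gap is in the single place where genuinely new input is required: the valuation-ring base case. You assert that "applying \cite[2.1']{BLR} over the valuation ring $R$ yields a finite separable extension $K'/K$ with a valuation ring $R'\supset R$ and a finite modification \dots flat \dots with reduced geometric fibres". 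That is not what BLR's theorem gives in the situation at hand. The output you quote (finite separable extension plus a valuation ring lying over $R$) is the form of their \emph{absolute} reduced fibre theorem, which is proved by rigid-geometric methods and only for valuation rings of \emph{height one}; their relative statement 2.1' requires the distinguished locus to be a schematically dense \emph{open} subscheme of the base and produces a covering in their restricted sense (a flat surjective $U$-\'etale morphism composed with a $U$-modification), not a valuation-ring extension. Neither applies as quoted: the rings $R=\calO/m$ arising from points of $\RZ_U(S)$ have arbitrary height --- even after your noetherian approximation they have finite but arbitrary height $>1$, and without that reduction $\{\eta\}$ need not even be open in $\Spec(R)$, so 2.1' is not formally applicable --- and they are neither complete nor of height one.

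This is precisely why the paper cites \cite[2.4.4]{Temst} instead: Steps 1--4 of the proof of that theorem constitute the (non-formal) extension of the BLR reduced fibre theorem from height-one valuation rings to valuation rings of arbitrary height, in exactly the form your gluing step consumes. That extension involves real work (induction on the height via composed valuations, descent from completions), so it cannot be absorbed into a bare citation of \cite[2.1']{BLR}. To repair your Step 2 you should either quote \cite[2.4.4]{Temst} as the paper does, or else restrict attention to the finite-height valuation rings that actually occur after approximation (so that $\{\eta\}$ is open in $\Spec(R)$), apply the relative theorem to the base $\Spec(R)$, and supply the missing argument converting BLR's flat $\eta$-\'etale surjective covering into a valuation ring $R'$ of a finite separable extension dominating $R$ (using that $\Spec(R)$ has no nontrivial modifications and that valuation rings lift through flat surjective quasi-compact morphisms). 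With that one input secured, the rest of your proof --- the flat model $X_T$, the gluing of $(X_U,X'_T)$, finite presentation via Lemma \ref{gluelem}(iii), and $S$-flatness via flattening --- coincides with the paper's argument.
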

\begin{proof}
If $S$ is the spectrum of a valuation ring and $U$ is its generic point then the theorem follows from
\cite[3.5.5]{Temst} (actually it was the content of Steps 2--4 of the loc.cit.). Acting as in Step 1 of the
previous proof, we deduce the case when $S$ is the spectrum of a semi-valuation ring and $U$ is the
corresponding local scheme. Then it remains to repeat the argument of Step 2.
\end{proof}

\section{Relative RZ spaces and the decomposition theorem}
\label{nagchap} Throughout \S\ref{nagchap}, $f:Y\to X$ is a morphism of schemes and $\gtX=\Val_Y(X)$. Later we
will also introduce a topological space $\Spa(Y,X)$ and then we will use the notation $\ogtX=\Spa(Y,X)$.
Sometimes we will consider another morphism of schemes $f':Y'\to X'$ and then $\gtX'=\Val_{Y'}(X')$,
$\ogtX'=\Spa(Y',X')$.

\subsection{Connection to adic spaces}
\label{spasec} Let $A$ be a ring and $B$ be an $A$-algebra. R. Huber considers in \cite{Hub1} the set $\Spv(B)$
of all equivalence classes of valuations on $B$ and provides it with the weakest topology in which the sets of
the form $\{|\ |\in\Spv(B)|\ |a|\le |b|\neq 0\}$ are open for any $a,b\in B$. Huber proves in \cite[2.2]{Hub1}
that the resulting topological space is quasi-compact. Furthermore, he considers the quasi-compact subspace
$\Spa(B,A)\subset\Spv(B)$ consisting of the valuations of $B$ with $|A|\le 1$: see the definition on p. 467 in
loc.cit., where one treats $A$ and $B$ as topological rings with discrete topology (note also that Huber
actually considers the case when $A$ is an integrally closed subring of $B$, but this does not really restrict
the generality because replacing $A$ by the integral closure of its image in $B$ has no impact on the
topological space $\Spa(B,A)$). Actually, the topological space $\Spa(B,A)$ has a much finer structure of an
adic space but we will not use it.

Let us generalize the above paragraph to schemes. Note that a valuation on a ring $A$ is defined by its kernel
$x\in\Spec(A)$ and the induced valuation on $k(x)$. So, by a {\em valuation on} a scheme $Y$ we mean a pair
$\bfy=(y,R)$, where $y\in Y$ is a point called the {\em kernel} of $\bfy$ and $R$ is a valuation ring of $k(y)$.
One can define $\bfy$ by giving a valuation $|\ |_\bfy:\calO_{Y,y}\to\Gamma_\bfy$ whose kernel is $m_y$. By
$\calO_\bfy$ we denote the subring of $\calO_{Y,y}$ given by the condition $|\ |_\bfy\le 1$; it is the preimage
of $R$ in $\calO_{Y,y}$. Remark that $\calO_\bfy$ is a semi-valuation ring with the semi-fraction ring
$\calO_{Y,y}$. Often it is convenient to describe a valuation locally by choosing an affine neighborhood
$\Spec(A)$ of $y$ and giving a valuation $A\to\calO_{Y,y}\to\Gamma_\bfy$ on $A$.

Furthermore, if $f:Y\to X$ is a morphism of schemes then by an {\em $X$-valuation on $Y$} me mean a valuation
$\bfy=(y,R)$ provided with a morphism $\phi:S=\Spec(R)\to X$ which is compatible with the natural morphism
$\eta=\Spec(k(y))\to X$. Recall that in the valuative criteria of properness/separatedness one considers
commutative diagrams of the form
\begin{equation}
\label{valdiag} \xymatrix{
\eta \ar[d]\ar[r]^-{i} & Y\ar[d]^-{f}\\
S\ar[r]^-{\phi} & X}
\end{equation}
where $S=\Spec(R)$ is the spectrum of a valuation ring and $\eta=\Spec(K)$ is its generic point, and studies
liftings of $S$ to $Y$. It is easy to see (and will be proved in Lemma \ref{valdiaglem1}) that it suffices to
consider only the case when $k(y)\toisom K$ for $y=i(\eta)$ in the valuative criteria. In the latter particular
case, diagrams of type (\ref{valdiag}) are exactly the diagrams which correspond to $X$-valuations of $Y$. Note
also that an $X$-valuation $\bfy=(y,R,\phi)$ gives rise to the following finer diagram
\begin{equation}
\label{twosqdiag} \xymatrix{ \eta\ar[r]\ar[d]& \Spec(\calO_{Y,y})\ar[r]\ar[d]& Y\ar[d]\\ S\ar[r]&
\Spec(\calO_\bfy)\ar[r]& X}
\end{equation}
Indeed, the center $x\in X$ of $R$ is a specialization the image of $y$ and the induced homomorphism
$\calO_{X,x}\to\calO_{Y,y}\to k(y)$ coincides with $\calO_{X,x}\to R\to \Frac(R)\toisom k(y)$. Hence these
homomorphisms factor through $\calO_\bfy$ (actually, we have just shown that the left square is co-Cartesian).

Let $\Spa(Y,X)$ denote the set of all isomorphism classes of $X$-valuations on $Y$. We claim that $\Spa(Y,X)$
depends functorially on $f$. Indeed, given a morphism $f':Y'\to X'$ and a morphism $g:f'\to f$ consisting of a
compatible pair of morphisms $g_Y:Y'\to Y$ and $g_X:X'\to X$, there is a natural map
$\Spa(g):\Spa(Y',X')\to\Spa(Y,X)$ which to a point $(y',R',\phi')$ associates a point $(y,R,\phi)$, where
$y=g_Y(y')$, $R=R'\cap k(y)$ and $\phi$ is defined as follows. The morphism $g_X\circ\phi':\Spec(R')\to X$
factors through $\Spec(\calO_{X,x})$, where $x$ is the image of the closed point of the source, hence we obtain
a homomorphism $\alpha:\calO_{X,x}\to R'$. Since the morphism $\Spec(k(y'))\to X$ factors uniquely through
$\Spec(k(y))$, the image of $\alp$ is contained in $R$. So, $g_X\circ\phi'$ factors uniquely through a morphism
$\phi:\Spec(R)\to X$ and the map $\Spa(g)$ is constructed.

If $g_Y$ is an immersion and $g_X$ is separated then $\Spa(g)$ is injective. Indeed, if a point
$\bfy=(y,R,\phi)\in\ogtX:=\Spa(Y,X)$ has a non-empty preimage in $\ogtX':=\Spa(Y',X')$, then $y\in Y'$ and any
preimage of $\bfy$ is given by a lifting of $\phi:\Spec(R)\to X$ to $X'$, which is unique by the valuative
criterion of separatedness. Furthermore, we say that $\ogtX'$ is an {\em affine subset} of $\ogtX$ if $Y'$ and
$X'$ are affine, $g_Y$ is an open immersion and $g_X$ is of finite type. We provide $\ogtX$ with the weakest
topology in which all affine subsets are open. Note that if we are given another morphism between morphisms
$h:(Y_1\to X_1)\to(Y\to X)$ with the corresponding map $\Spa(h):\ogtX_1\to\ogtX$, then $Y'_1:=Y'\times_Y Y_1$ is
a subscheme in $Y_1$ and $X'_1:=X'\times_X X_1$ is separated over $X_1$, hence $\ogtX'_1:=\Spa(Y'_1,X'_1)$
embeds into $\ogtX_1$.

\begin{lem}\label{afflem}
Let $\Spa(g):\ogtX'\to\ogtX$ and $\Spa(h):\ogtX_1\to\ogtX$ be as above and assume that $g_Y$ is an immersion and
$g_X$ is separated.

(i) $\ogtX'_1$ is the preimage of $\ogtX'$ under $\Spa(h)$.

(ii) If $X$ and $Y$ are separated, $\ogtX'$ is an affine subset of $\ogtX$ and both $X_1$ and $Y_1$ are affine,
then $\ogtX'_1$ is an affine subset of $\ogtX_1$. In particular, if $X$ and $Y$ are separated then the
intersection of affine subsets in $\ogtX$ is an affine subset.

(iii) Affine subsets form a basis of the topology on $\ogtX$, and if $X$ and $Y$ are qcqs then any intersection
of two affine subsets is a finite union of affine subsets.

(iv) If $g_Y$ is an open immersion and $g_X$ is of finite type then $\ogtX'$ is open in $\ogtX$;

(v) The maps $\Spa(h)$ are continuous.
\end{lem}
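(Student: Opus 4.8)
The plan rests on first recording an intrinsic description of the image of $\Spa(g)$ in $\ogtX$. Since $g_Y$ is an immersion and $g_X$ is separated, a point $\bfy=(y,R,\phi)\in\ogtX$ lies in the image of $\Spa(g)$ if and only if $y$ lies in $Y'$ and $\phi\colon\Spec(R)\to X$ admits a lift to $X'$; such a lift is then unique by the valuative criterion of separatedness, so $\ogtX'$ is genuinely identified with this subset of $\ogtX$. Alongside this I would record the local fact used repeatedly below: for a valuation ring $R$ the only open subscheme of $\Spec(R)$ meeting the closed point is $\Spec(R)$ itself, hence any morphism $\Spec(R)\to X'$ factors through every open subscheme of $X'$ containing the image of the closed point. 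For (i) I then match the two descriptions through $\Spa(h)$. A point $\bfy_1=(y_1,R_1,\phi_1)\in\ogtX_1$ has image $(y,R,\phi)$ with $y=h_Y(y_1)$ and $R=R_1\cap k(y)$; as $g_Y$ is an immersion, $y_1\in Y'_1=Y'\times_Y Y_1$ iff $y\in Y'$, and since $X'_1=X'\times_X X_1$, a lift of $\phi_1$ to $X'_1$ over $X_1$ is exactly a lift of the composite $h_X\circ\phi_1\colon\Spec(R_1)\to X$ to $X'$. So it remains to show that, once $y\in Y'$, the valuation $\phi$ lifts to $X'$ iff $h_X\circ\phi_1$ does. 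One implication is immediate by precomposing a lift of $\phi$ with $\Spec(R_1)\to\Spec(R)$. \emph{The converse is the main obstacle:} given a lift $\Spec(R_1)\to X'$ with center $x'$, I would use that its generic point factors through $\eta=\Spec(k(y))\xrightarrow{f'}X'$ (this is precisely where $y\in Y'$ enters) to conclude that the local homomorphism $\calO_{X',x'}\to R_1$ has image in $k(y)$, hence in $R_1\cap k(y)=R$, which produces the desired lift of $\phi$.

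For (ii) and (iii) I would compute intersections. For affine subsets $\ogtX'=\Spa(Y',X')$ and $\ogtX''=\Spa(Y'',X'')$, the intersection consists of those $(y,R,\phi)$ with $y\in Y'\cap Y''$ and $\phi$ lifting to both $X'$ and $X''$; since a lift to $X'\times_X X''$ over $X$ is the same datum as a compatible pair of lifts (compatibility being automatic by separatedness), this is $\Spa(Y'\cap Y'',X'\times_X X'')$. When $Y$ and $X$ are separated, $Y'\cap Y''$ is affine and $X'\times_X X''$ is an affine scheme of finite type over $X$, so the intersection is again an affine subset, which is (ii). In general I would localize using the local fact: any lift of $\phi$ factors through an affine open of $X'$ (resp.\ $X''$) lying over a chosen affine open $U\subset X$ containing the center $x$, and I would replace $X'\times_X X''$ by the fibre product of these affine opens over the affine $U$, again an affine scheme of finite type over $X$. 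Covering $Y'\cap Y''$ by open affines then produces affine subsets contained in and covering the intersection, so the intersection of two affine subsets is a union of affine subsets; hence they form a basis, and the qcqs hypothesis makes the covers finite, giving the finite union in (iii).

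For (iv), with $g_Y$ an open immersion and $g_X$ of finite type, I would cover $\ogtX'$ by affine subsets of $\ogtX$: fix an affine open cover $\{V_\beta\}$ of $X'$ (each $V_\beta$ is an affine scheme of finite type, hence separated, over $X$), and for a point $(y,R,\phi)\in\ogtX'$ with lift $\phi'$ use the local fact to see that $\phi'$ factors through the $V_\beta$ containing its center. Choosing an open affine $W\ni y$ in $Y'$ small enough that $f'(W)\subset V_\beta$, the affine subset $\Spa(W,V_\beta)$ contains the point and is contained in $\ogtX'$; thus $\ogtX'$ is a union of affine subsets and so is open. Finally (v) follows by combining (i) and (iv): the preimage under $\Spa(h)$ of an affine subset $\ogtX'$ equals $\ogtX'_1=\Spa(Y'_1,X'_1)$ by (i), where $Y'_1=h_Y^{-1}(Y')$ is open in $Y_1$ and $X'_1=X'\times_X X_1$ is affine of finite type over $X_1$, and this set is open in $\ogtX_1$ by exactly the argument of (iv). As the affine subsets generate the topology, $\Spa(h)$ is continuous.
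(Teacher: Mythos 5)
Your proposal is correct, and its skeleton is the same as the paper's: (i) by a direct check of the lifting description of the image of $\Spa(g)$, (ii) from (i) together with affineness of $Y'\cap Y''$ and $X'\times_X X''$ over a separated base, (iii) and (iv) by covering the space in question by affine subsets, and (v) by combining (i) and (iv). The one genuine difference is the covering step, and there your argument is actually the more careful one. The paper picks affine coverings $\oX=\cup\oX_i$ and $\oY=\cup\oY_j$ with each $\oY_j$ mapped into some $\oX_{i(j)}$ and asserts that $\Spa(\oY,\oX)=\cup_j\Spa(\oY_j,\oX_{i(j)})$; as literally stated this can fail: for $\oY=\bbA^1_k\into\oX=\bbP^1_k$ with $\oX$ covered by its two standard charts and $\oY$ covered by the single piece $\oY_1=\bbA^1_k$ mapped into the chart $\oX_1=\bbA^1_k$, the point of $\Spa(\oY,\oX)$ given by the generic point of $\oY$ and the valuation ring $k[t^{-1}]_{(t^{-1})}$ (center at infinity) lies in no chosen piece, since membership in $\Spa(\oY_j,\oX_{i(j)})$ forces the whole valuation spectrum to land in $\oX_{i(j)}$. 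What makes the covering argument work is precisely your ``local fact'' (a morphism from the spectrum of a valuation ring factors through every open containing the image of the closed point, because open sets are stable under generalization): one must first choose the affine piece of the target around the \emph{center} of the valuation and only then shrink the affine neighbourhood of $y$ inside its preimage, which is exactly what you do in (iii) and (iv), and which in particular forces the pieces of $\oY$ assigned to a fixed $\oX_i$ to cover the whole preimage of $\oX_i$. Your finiteness claim in (iii) is also fine, since morphisms from quasi-compact to quasi-separated schemes are quasi-compact, so all the preimages involved admit finite affine covers. One harmless slip: in (v), $X'_1=X'\times_X X_1$ need not be affine over $X_1$ (a morphism from an affine scheme to a non-separated scheme need not be an affine morphism, so this does not survive base change); but your argument only uses that $X'_1\to X_1$ is separated and of finite type, which does hold by base change.
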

\begin{proof}
The first claim is proved by a straightforward check. If $Y$ and $X$ are separated then $Y'\times_Y Y_1$ and
$X'\times_X X_1$ are affine, hence (i) implies (ii). Furthermore, in general (i) implies that the intersection
of affine subsets in $\ogtX$ is of the form $\Spa(\oY,\oX)$. Since an affine subset in $\Spa(\oY,\oX)$ is also
an affine subset in $\ogtX$, to prove (iii) it suffices to show that any space $\Spa(\oY,\oX)$ (resp. with qcqs
$\oX$ and $\oY$) is covered by (resp. finitely many) affine subsets. Find open affine (resp. finite) coverings
$\oX=\cup\oX_i$ and $\oY=\cup\oY_j$ such that each $\oY_j$ is mapped to some $\oX_{i(j)}$, and note that
$\Spec(\oY,\oX)$ is the union of affine subsets $\Spa(\oY_j,\oX_{i(j)})$. This proves (iii), and the same
argument proves (iv). Finally, (v) follows from the fact the preimage of each affine subset is open due to (i)
and (iv).
\end{proof}

We claim that in the affine case the above topology agrees with the topology defined by Huber.

\begin{lem}
\label{spahomlem} If $X=\Spec(A)$ and $Y=\Spec(B)$ are affine then the canonical bijection
$\phi:\Spa(Y,X)\to\Spa(B,A)$ is a homeomorphism.
\end{lem}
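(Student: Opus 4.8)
The plan is to show that, under the evident bijection, the affine-subset topology on $\ogtX=\Spa(Y,X)$ and Huber's topology on $\Spa(B,A)$ are generated by interchangeable families of sets, so that each refines the other. First recall why the bijection $\phi$ exists: a point of $\ogtX$ is a triple $(y,R,\phi)$ with $y\in Y$, $R$ a valuation ring of $k(y)$, and $\phi:\Spec(R)\to X$ compatible with $\eta=\Spec(k(y))\to X$; since $X=\Spec(A)$ is affine, $\phi$ is the unique morphism determined by $A\to k(y)$ as soon as this map has image in $R$, i.e. as soon as $|A|\le 1$. Hence $(y,R,\phi)$ is the same datum as a valuation on $B$ with kernel $y$ satisfying $|A|\le 1$, which is exactly a point of $\Spa(B,A)$. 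It remains to match the topologies, and the key computation I would carry out is the image in $\ogtX$ of a typical affine subset, comparing it with Huber's subbasic sets $R(a,b)=\{\,|a|\le|b|\neq 0\,\}$.

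For the first direction I would show each $R(a,b)$ is itself an affine subset, hence open in $\ogtX$. Given $a,b\in B$, put $Y'=\Spec(B_b)$, an open affine subscheme of $Y$ on which $a/b$ is regular, and let $X'=\bbA^1_X=\Spec(A[t])$ with the $X$-morphism $f':Y'\to X'$ determined by $t\mapsto a/b$. Then $X'$ is affine of finite type over $X$, so $\ogtX'=\Spa(Y',X')$ is an affine subset; the map $\Spa(g)$ is injective by Lemma \ref{afflem}. Unwinding the compatibility condition for an $X'$-valuation (and using that $g_X:X'\to X$ is separated, so any lift is unique by the valuative criterion), one finds the image to be $\{(y,R,\phi):y\in\Spec(B_b),\ |a/b|\le 1\}=\{\,|a|\le|b|\neq 0\,\}=R(a,b)$. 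Since the $R(a,b)$ generate Huber's topology, every Huber-open subset is open in $\ogtX$.

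The converse direction, that every affine subset is Huber-open, carries the real content. Let $\ogtX'=\Spa(Y',X')$ be an affine subset, with $Y'\subset Y$ open affine and $X'=\Spec(A')$ of finite type over $X$; write $A'=A[s_1\. s_n]$, so that $f'$ corresponds to $s_i\mapsto\sigma_i\in\Gamma(Y',\calO_Y)$. The same unwinding identifies the image of $\ogtX'$ in $\ogtX$ with $\{(y,R,\phi):y\in Y',\ |\sigma_i|\le 1\ \text{for all }i\}$, where one uses that the subring of $k(y)$ cut out by $|\ |\le 1$ is the ring $R$ already containing the image of $A$, so that $|A'|\le 1$ reduces to the finitely many conditions $|\sigma_i|\le 1$. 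To see this set is Huber-open I would take a finite cover of $Y'$ by basic opens $\Spec(B_{g_\alpha})$ with $g_\alpha\in B$, over each of which $\sigma_i=b_{i\alpha}/g_\alpha^{k_{i\alpha}}$ for some $b_{i\alpha}\in B$; then on the $\alpha$-piece the condition $|\sigma_i|\le 1$ becomes $|g_\alpha|\neq 0$ together with $|b_{i\alpha}|\le|g_\alpha^{k_{i\alpha}}|$, i.e. the finite intersection of rational subsets $R(g_\alpha,g_\alpha)\cap\bigcap_i R(b_{i\alpha},g_\alpha^{k_{i\alpha}})$. The image of $\ogtX'$ is the union over $\alpha$ of these finite intersections, hence Huber-open. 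Combining the two directions, $\phi$ identifies the two topologies and is therefore a homeomorphism.

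The step I expect to be the main obstacle is precisely the identification of the image of an affine subset as $\{y\in Y',\ |\sigma_i|\le 1\}$: this requires carefully chasing the compatibility of the lift $\phi'$ with the structure morphism $f':Y'\to X'$, invoking uniqueness of the lift from separatedness of $X'\to X$ to rule out spurious points, and then the bookkeeping of localizing to basic opens in order to rewrite each constraint $|\sigma_i|\le 1$ in terms of genuine elements of $B$. Once this dictionary between affine subsets and finite Boolean combinations of rational subsets is in place, the homeomorphism assertion is immediate.
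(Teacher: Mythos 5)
Your proof is correct and follows essentially the same route as the paper: the paper also disposes of one direction by noting that rational subsets $\{|a|\le|b|\neq 0\}$ correspond to affine subsets (which you make explicit via $\Spa(\Spec(B_b),\bbA^1_X)$ with $t\mapsto a/b$), and for the other direction it likewise replaces $A'$ by the $A$-subalgebra of $C$ generated by finitely many elements, shrinks $Y'$ to basic opens $\Spec(B_b)$ so that the generators become $b_i/b^m$, and identifies the image with the Huber-open set $\{|b_i|\le|b^m|\neq 0\}$. Your covering-by-$\Spec(B_{g_\alpha})$ bookkeeping is exactly the paper's observation that $\{\Spa(U_i,\Spec(A'))\}$ covers $\ogtX'$ for any open cover $\{U_i\}$ of $\Spec(C)$.
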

\begin{proof}
It follows from the definitions of the topologies that the map is continuous, so we have only to establish
openness. Let $\ogtX'=\Spa(\Spec(C),\Spec(A'))$ be an affine subset in $\ogtX$, where $\Spec(C)$ is an open
subscheme of $\Spec(B)$ and $A'$ is a finitely generated $A$-algebra. It suffices to prove that $\phi(\ogtX')$
is a neighborhood of each point $\bfz$ it contains. Replacing $A'$ with its image in $C$ we can assume that it
is an $A$-subalgebra of $C$ generated by $h_1\. h_n\in C$. Note that if $\{U_i\}$ is an open covering of
$\Spec(C)$ then the sets $\Spa(U_i,\Spec(A'))$ cover $\ogtX'$. Therefore, shrinking $\Spec(C)$ we can assume
that $C=B_b$ for an element $b\in B$. Then $h_i=b_i/b^m$ with $b_i\in B$ and $m\in\bfN$, and $\phi(\ogtX')$
consists of all valuations of $B$ with $|b_i|\le|b^m|\neq 0$ for any $i$. Thus, $\phi(\ogtX')$ is open in
$\Spa(B,A)$, and we are done.
\end{proof}

Since Huber's spaces $\Spa(B,A)$ are qcqs, we obtain the following corollary.

\begin{cor}\label{qcqscor}
If $X$ and $Y$ are qcqs schemes then the space $\Spa(Y,X)$ is qcqs.
\end{cor}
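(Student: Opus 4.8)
The plan is to deduce the corollary formally from Lemma~\ref{spahomlem}, Lemma~\ref{afflem}(iii), and the fact (just recalled) that Huber's spaces $\Spa(B,A)$ are qcqs. The one observation that makes everything run is that every affine subset of $\ogtX=\Spa(Y,X)$ is itself quasi-compact. Indeed, an affine subset has the shape $\Spa(Y',X')$ with $Y'=\Spec(B')$ an open affine of $Y$ and $X'=\Spec(A')$ affine of finite type over $X$; since both $Y'$ and $X'$ are affine schemes, Lemma~\ref{spahomlem} identifies $\Spa(Y',X')$ homeomorphically with Huber's space $\Spa(B',A')$, which is quasi-compact. Thus the affine subsets form a basis of $\ogtX$ consisting of quasi-compact opens.

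For quasi-compactness of $\ogtX$, I would first produce a finite cover by affine subsets, just as in the proof of Lemma~\ref{afflem}(iii). Choose a finite affine cover $\{\oX_i\}$ of the quasi-compact scheme $X$, and for each $i$ cover the open subscheme $f^{-1}(\oX_i)$ of the quasi-compact scheme $Y$ by finitely many affine opens $\oY_{i,k}\subseteq f^{-1}(\oX_i)$, so that $f(\oY_{i,k})\subseteq\oX_i$. I claim the finite family $\{\Spa(\oY_{i,k},\oX_i)\}_{i,k}$ of affine subsets covers $\ogtX$. Given $\bfy=(y,R,\phi)$ with center $x$, pick $i$ with $x\in\oX_i$; since open sets are stable under generization and $x$ specializes $f(y)$, one has $f(y)\in\oX_i$, whence $y\in f^{-1}(\oX_i)$ lies in some $\oY_{i,k}$ and $\phi$ factors through $\oX_i$, so $\bfy\in\Spa(\oY_{i,k},\oX_i)$. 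As each member is quasi-compact by the previous paragraph, $\ogtX$ is a finite union of quasi-compact sets, hence quasi-compact.

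For quasi-separatedness I would invoke the standard topological criterion: if a space has a basis of quasi-compact opens such that the intersection of any two of them is a finite union of basis elements, then the intersection of any two quasi-compact opens is quasi-compact. Concretely, let $W_1,W_2\subseteq\ogtX$ be quasi-compact opens. Each is a union of affine subsets (they form a basis) and, being quasi-compact, a \emph{finite} such union; writing $W_1=\bigcup_a P_a$ and $W_2=\bigcup_b Q_b$ gives $W_1\cap W_2=\bigcup_{a,b}(P_a\cap Q_b)$. Here the hypothesis that $X$ and $Y$ are qcqs enters through Lemma~\ref{afflem}(iii), by which each $P_a\cap Q_b$ is a finite union of affine subsets, hence quasi-compact; a finite union of quasi-compact sets being quasi-compact, $W_1\cap W_2$ is quasi-compact and $\ogtX$ is quasi-separated.

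There is no serious obstacle: the statement is an assembly of results already established. The only point demanding a little care is checking that the finite family of affine subsets genuinely covers $\ogtX$ as a set — one must match the chart $\oX_i$ containing the center $x$ with a chart $\oY_{i,k}$ containing the kernel $y$, which works precisely because $x\in\oX_i$ forces the generization $f(y)$ into $\oX_i$ as well. This is exactly the covering argument underlying Lemma~\ref{afflem}(iii), so it may equally well simply be cited from there.
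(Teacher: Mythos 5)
Your proof is correct and is essentially the argument the paper intends: the paper deduces the corollary immediately from Lemma~\ref{spahomlem} (affine subsets are Huber spaces $\Spa(B',A')$, hence quasi-compact) together with Lemma~\ref{afflem}(iii) (finitely many affine subsets cover $\Spa(Y,X)$ when $X,Y$ are qcqs, and the intersection of two affine subsets is a finite union of affine subsets), which is exactly what you assembled. Your care in first choosing the chart $\oX_i$ containing the \emph{center} $x$ (so that the whole image of $\phi$, consisting of generizations of $x$, lies in $\oX_i$) and only then a chart $\oY_{i,k}\subseteq f^{-1}(\oX_i)$ containing the kernel is a sound, and in fact slightly more careful, rendering of the covering step in the proof of Lemma~\ref{afflem}(iii).
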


Let $B$ be a ring provided with a valuation $|\ |:B\to\Gamma\cup\{0\}$, and let $y\in\Spec(B)$ be its kernel. We
say that a convex subgroup $\Gamma'\subseteq\Gamma$ {\em bounds} $B$, if for any element $b\in B$, there exists
an element $h\in\Gamma'$ with $|b|\le h$. For any such subgroup we can define a valuation $|\ |':B\to\Gamma'$ by
the rule $|x|'=|x|$ if $|x|\in\Gamma'$ and $|x|'=0$ otherwise. Obviously, the kernel $y'$ of $|\ |'$ isa
specialization of $y$. Recall that $|\ |'$ is called a {\em primary specialization} of $|\ |$, see
\cite[2.3]{Hub1}. Here are simple properties of primary specializations.

\begin{rem}\label{primrem}
(i) Primary specialization is a transitive operation and the set $P$ of primary specializations of $|\ |$ is
ordered.

(ii) The set $P$ possesses a minimal element corresponding to the intersection of all subgroups bounding $B$; it
is called the {\em minimal primary specialization}.

(iii) A valuation on $B$ is called {\em minimal} if it has no non-trivial primary specializations. For a
valuation given by a point $y\in\Spec(B)$ and a valuation ring $R\subset k(y)$ the following conditions are
equivalent: (a) $(y,R)$ is minimal; (b) $k(y)$ is generated by $R$ and the image of $B$; (c) the morphism
$\Spec(k(y))\to\Spec(R)\times\Spec(B)$ is a closed immersion.

(iv) Let $|\ |:B\to\Gamma\cup\{0\}$ be a valuation with kernel $y$, $\Gamma'\subseteq\Gamma$ be a convex
subgroup, and $R\subseteq R'$  be the valuation ring of $k(y)$ corresponding to the induced valuations
$k(y)\to\Gamma$ and $k(y)\to\Gamma\to\Gamma/\Gamma'$. Then the following conditions are equivalent: (a) there
exists a primary specialization $|\ |'$ corresponding to $\Gamma'$; (b) the image of $B$ in $k(y)$ is contained
in $R'$; (c) the morphism $y\to\Spec(B)$ extends to a morphism $\Spec(R')\to\Spec(B)$. Moreover, if the
conditions are satisfied then the kernel $y'$ of $|\ |'$ is the center of $R'$ on $\Spec(B)$. The equivalences
(a)$\Lra$(b) and (b)$\Lra$(c) are obvious. As for the additional claim, we note that the center of $R'$
corresponds to the kernel of the homomorphism $B\to R'\to R'/m_{R'}$, and the latter consists of the elements
$b\in B$ with $|b|\notin\Gamma'$, i.e. coincides with the kernel of $|\ |'$.
\end{rem}

Let, more generally, $\bfy=(y,R)$ be a valuation on a scheme $Y$. By a {\em primary specialization} of $\bfy$ we
mean a valuation $\obfy=(\oy,\oR)$ such that $\oy$ is a specialization of $y$ and the valuation $|\ |_\obfy$ on
$\calO_{Y,\oy}$ is a primary specialization of the valuation induced from $\bfy$ via the homomorphism
$\calO_{Y,\oy}\to\calO_{Y,y}$. Equivalently, if $\Spec(A)$ is an affine neighborhood of $\oy$ (and hence of $y$)
then the valuation induced by $(\oy,\oR)$ on $A$ is a primary specialization of the valuation induced by
$(y,R)$.

\begin{lem}\label{primlem}
Let $(y,R)$ be a valuation on a separated scheme $Y$.

(i) The set of primary valuations of $(y,R)$ is totally ordered by specialization;

(ii) If $Y$ is also quasi-compact then $(y,R)$ admits a minimal primary specialization.
\end{lem}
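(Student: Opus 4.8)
The plan is to reduce both assertions to the affine statements of Remark \ref{primrem} by setting up a dictionary between primary specializations and coarsenings of $R$. Since open subsets of a scheme are stable under generization and the kernel $\oy$ of any primary specialization $\obfy=(\oy,\oR)$ specializes $y$, every affine open containing $\oy$ also contains $y$; hence the primary specialization condition can always be tested on an affine neighborhood $\Spec(A)$ of $\oy$. On such a chart, giving a primary specialization of $\bfy$ amounts, by Remark \ref{primrem}(iv), to giving a convex subgroup $\Gamma'$ of the value group $\Gamma$ of $(y,R)$ that bounds $A$; equivalently, writing $R'\supseteq R$ for the coarsening of $R$ in $k(y)$ attached to $\Gamma'$, it amounts to giving an overring $R'$ of $R$ in $k(y)$ whose valuation admits a center on $Y$, with $\oy$ that center and $\oR$ the valuation of $k(\oy)$ induced by $\Gamma'$. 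Because $Y$ is separated the center of $R'$ is unique, so this is a bijection between the primary specializations of $\bfy$ and the overrings $R\subseteq R'\subseteq k(y)$ admitting a center on $Y$.

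For (i) I would invoke the standard fact that the overrings of a valuation ring $R$ inside $k(y)$ are totally ordered by inclusion. Given two primary specializations with associated overrings $R'_1,R'_2$, say $R'_1\subseteq R'_2$, the inclusion $R'_1\into R'_2$ induces $\Spec(R'_2)\to\Spec(R'_1)$ over $Y$, from which one reads off that the center $\oy_2$ of $R'_2$ lies in $\ol{\{\oy_1\}}$ and that, on any affine neighborhood of $\oy_2$ (hence of $\oy_1$ and of $y$), the valuation attached to $\Gamma'_2\subseteq\Gamma'_1$ is a primary specialization of the one attached to $\Gamma'_1$. Thus any two primary specializations are comparable, and transitivity from Remark \ref{primrem}(i) together with the bijection above shows the relation is a total order.

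For (ii), cover the quasi-compact $Y$ by finitely many affine opens and discard those not containing $y$; by the first paragraph these are the only charts in which a center can occur, leaving finitely many $U_1=\Spec(A_1)\. U_m=\Spec(A_m)$. For each $j$, Remark \ref{primrem}(ii) furnishes a minimal primary specialization on $U_j$, i.e.\ a largest overring $R'_j\supseteq R$ in $k(y)$ whose center lies in $U_j$; concretely, an overring $R'$ has its center in $U_j$ exactly when $R'\subseteq R'_j$. Hence an overring of $R$ admits a center on $Y$ precisely when it is contained in some $R'_j$, and since the finitely many $R'_j$ are totally ordered by inclusion their maximum $R'_{j_0}$ is the largest overring of $R$ admitting a center on $Y$. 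The primary specialization attached to $R'_{j_0}$ is then the required minimal one.

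The single genuinely global point — and the place where the hypotheses are really used — is the passage from the per-chart maxima of Remark \ref{primrem}(ii) to a global maximum: separatedness makes the center, and hence the whole dictionary, well defined, while quasi-compactness replaces the supremum over all coarsenings-with-center by a maximum over the finitely many charts $U_j$. I expect this finite-maximum step to be the crux; the remaining verifications (that the inclusion order of overrings matches the specialization order of centers, and that the induced valuation $\oR$ reconstructs $\obfy$) are routine unwindings of the affine case.
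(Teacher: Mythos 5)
Your overall strategy---translate primary specializations into overrings $R\subseteq R'\subseteq k(y)$ admitting a center on $Y$ (unique by separatedness), and then reduce everything to Remark \ref{primrem}---is sound and is essentially the paper's argument; for (ii) the paper uses quasi-compactness to produce a single common specialization $\oy$ of all the kernels and then works in the one ring $\calO_{Y,\oy}$, whereas you work with a finite affine cover, which is a fine variant. However, your dictionary is order-reversed, and in part (ii) this is not a harmless relabelling: it makes the key step false as written. By Remark \ref{primrem}(iv), the overring $R'$ attached to a convex subgroup $\Gamma'$ grows when $\Gamma'$ grows ($\Gamma'=\{1\}$ gives $R'=R$, $\Gamma'=\Gamma$ gives $R'=k(y)$), and $R'$ admits a center on an affine chart $U_j=\Spec(A_j)$ exactly when the image of $A_j$ in $k(y)$ lies in $R'$, i.e.\ exactly when $\Gamma'$ bounds $A_j$. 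This condition is stable under \emph{enlarging} $R'$, so the set of overrings with a center on $U_j$ is \emph{upward} closed: it equals $\{R'\,:\,R'\supseteq R'_j\}$, where $R'_j$ is the \emph{smallest} overring admitting a center on $U_j$, the one corresponding to the intersection of all bounding subgroups in Remark \ref{primrem}(ii); it is this smallest overring that gives the minimal primary specialization. Your claims that the set is $\{R'\,:\,R'\subseteq R'_j\}$ and that the minimal primary specialization corresponds to the \emph{largest} centered overring are backwards. Indeed, the largest overring admitting a center on $Y$ is always $k(y)$ itself (its center is $y$), and the primary specialization attached to it is the trivial one $(y,R)$, not the minimal one---so the recipe ``take the maximum of the $R'_j$'' literally outputs the wrong object.

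The same reversal occurs in (i): if $R'_1\subseteq R'_2$, then $\Spec(R'_2)\to\Spec(R'_1)$ sends the closed point to a generization of the closed point, so the center of $R'_2$ \emph{generalizes} that of $R'_1$ (i.e.\ $\oy_1\in\ol{\{\oy_2\}}$, not the other way round), and $R'_1\subseteq R'_2$ corresponds to $\Gamma'_1\subseteq\Gamma'_2$. Since comparability is direction-free, part (i) survives your flip, and one then checks comparability on an affine neighborhood of the more special center $\oy_1$ (which contains $\oy_2$ and $y$ as generizations), exactly as the paper does via $\calO_{Y,\oy_1}$. Part (ii) is repaired mechanically: an overring has a center on $Y$ iff it contains some $R'_j$, iff it contains $\min_j R'_j$; as the finitely many $R'_j$ are totally ordered, $\min_j R'_j$ is the smallest overring with a center on $Y$, and the primary specialization attached to it is the required minimal one. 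With these corrections (every ``$\subseteq$'' and ``largest'' swapped appropriately) your proof is complete and very close to the paper's.
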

\begin{proof}
We claim that (i) follows from Remark \ref{primrem}(iv). Indeed, for any $R'$ with $R\subseteq R'\subseteq k(y)$
there exists at most one possibility to extend $y$ to a morphism $\Spec(R')\to Y$. So, if we have two primary
specializations $(y_1,R_1)$ and $(y_2,R_2)$ corresponding to valuation rings $R\subseteq R',R''\subseteq k(y)$,
then without loss of generality  we have that $R'\subseteq R''$ and the unique morphism $\Spec(R'')\to Y$ is
obtained by localizing the morphism $\Spec(R')\to Y$. Thus, $y_1$ is a specialization of $y_2$, and everything
reduces to affine theory of primary specializations on $\calO_{Y,y_1}$, see Remark \ref{primrem}(i). To prove
(ii) we note that $(y,R)$ admits a minimal primary specialization because if $\{(y_i,R_i)\}_{i\in I}$ denotes
the set of all primary specializations then the set of kernels $\{y_i\}_{i\in I}$ is totally ordered with
respect to specialization. By quasi-compactness there exists a point $\oy\in Y$ which is a specialization all
$y_i$'s. So, the claim reduces to the affine theory on $\calO_{Y,\oy}$, see Remark \ref{primrem}(ii).
\end{proof}

Finally, taking a morphism $f:Y\to X$ into account, by a {\em primary specialization} of an $X$-valuation
$\bfy=(y,R,\phi)$ we mean an $X$-valuation $\obfy=(\oy,\oR,\ophi)$ such that $(\oy,\oR)$ is a primary
specialization of $(y,R)$ and the image of $\ophi$ in $X$ is contained in the image of $\phi$ in $X$. Primary
specialization is a particular case of a specialization relation in $\Spa(Y,X)$. An $X$-valuation $(y,R,\phi)$
(resp. a valuation $(y,R)$) on $Y$ is called {\em minimal} if it has no non-trivial primary specializations.

\begin{lem}\label{primspecrem}
Let $(y,R,\phi)$ be an $X$-valuation on $Y$. Then any primary specialization $(\oy,\oR)$ of the valuation
$(y,R)$ admits at most one extension to a primary specialization $(\oy,\oR,\ophi)$ of $(y,R,\phi)$, and the
extension exists if and only if $f(\oy)$ belongs to the image of $\phi$. The latter is automatically the case
when $X$ is separated.
\end{lem}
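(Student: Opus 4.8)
The plan is to carry out the whole analysis inside the affine local scheme $\Spec(\calO_{X,x})$, where $x\in X$ denotes the center of $R$, i.e.\ the image under $\phi$ of the closed point of $\Spec(R)$. First I would record two elementary observations. Since $\phi$ has center $x$, it factors as $\Spec(R)\to\Spec(\calO_{X,x})\to X$, the second arrow being a monomorphism that identifies $\Spec(\calO_{X,x})$ with the set of generizations of $x$; moreover the image $C:=\Im(\phi)$ is totally ordered by specialization (being the continuous image of the chain $\Spec(R)$) with $x$ as its unique closed point, so \emph{every} point of $C$ generizes $x$. Secondly, any $\ophi$ making $\obfy=(\oy,\oR,\ophi)$ an $X$-valuation must send the generic point of $\Spec(\oR)$ to $f(\oy)$ (this is forced by compatibility with $\Spec(k(\oy))\to Y\to X$), whence $f(\oy)\in\Im(\ophi)$. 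Combined with the requirement $\Im(\ophi)\subseteq\Im(\phi)=C$, this already gives the necessity of the condition $f(\oy)\in\Im(\phi)$.

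For uniqueness I would argue that $\ophi$ is determined by its center. Indeed, a morphism $\Spec(\oR)\to X$ lifting the fixed map $\Spec(k(\oy))\to X$ amounts to a point $\ox\in X$ together with a local homomorphism $\calO_{X,\ox}\to\oR$, and since $\oR\subseteq k(\oy)$ the latter is uniquely recovered from $\ox$ and the (fixed) composite $\calO_{X,\ox}\to k(\oy)$. Thus it suffices to show the center $\ox$ is unique. But the condition $\Im(\ophi)\subseteq C$ forces $\ox\in C$, hence $\ox$ generizes $x$ and lies in $\Spec(\calO_{X,x})$. On an affine scheme the center of a given valuation is unique: after replacing $\calO_{X,x}$ by its image in $k(\oy)$, any center must be the contraction of $m_{\oR}$ along $\calO_{X,x}\to k(\oy)$. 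Hence $\ox$ is unique (in fact $\ox=x$), proving the ``at most one'' assertion.

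For existence, assume $w:=f(\oy)\in C$; then $w$ generizes $x$, i.e.\ $x\in\overline{\{w\}}$. The natural homomorphism $\calO_{X,x}\to\calO_{Y,y}$ then factors through $\calO_{Y,\oy}$, since it factors as $\calO_{X,x}\to\calO_{X,w}\to\calO_{Y,\oy}\to\calO_{Y,y}$. On the other hand diagram \ref{twosqdiag} for $\bfy$ provides $\calO_{X,x}\to\calO_\bfy\subseteq\calO_{Y,y}$; intersecting, its image lands in $\calO_\bfy\cap\calO_{Y,\oy}$, which one checks equals $\calO_{\obfy}$, the preimage of $\oR$ in $\calO_{Y,\oy}$. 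This yields $\calO_{X,x}\to\calO_{\obfy}$, hence a morphism $\ophi:\Spec(\oR)\to\Spec(\calO_{\obfy})\to\Spec(\calO_{X,x})\to X$ whose generic point maps to $w$. It remains to check $\Im(\ophi)\subseteq\Im(\phi)$: writing $R'=R_\gtp$ for the valuation ring attached to the convex subgroup $\Gamma'$ defining $(\oy,\oR)$ and $\kappa=R'/m_{R'}$, one has $\oR=(R/\gtp)\cap k(\oy)$ and the induced map $\Spec(R/\gtp)\to\Spec(\oR)$ is surjective, so $\Im(\ophi)$ coincides with the image of $\Spec(R/\gtp)\into\Spec(R)\xrightarrow{\phi}X$, which lies in $\Im(\phi)$. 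I expect this last verification --- keeping straight the three value groups $\Gamma$, $\Gamma'$, $\Gamma/\Gamma'$ and the rings $R,R',\oR,\calO_\bfy,\calO_{\obfy}$ and confirming the image containment --- to be the main technical obstacle.

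Finally, for the separated case I would exhibit two morphisms $\Spec(R')\to X$ and show they coincide. With $R'=R_\gtp$ as above, Remark \ref{primrem}(iv) applied on $Y$ shows $\oy$ is the center of $R'$ on $\Spec(\calO_{Y,\oy})$, giving $\Spec(R')\to\Spec(\calO_{Y,\oy})\to Y\xrightarrow{f}X$ sending the closed point to $f(\oy)$; on the other hand $\Spec(R')=\Spec(R_\gtp)\into\Spec(R)\xrightarrow{\phi}X$ sends the closed point to $\phi(\gtp)\in\Im(\phi)$. Both restrict on the generic point $\Spec(k(y))$ to the map determined by the $X$-valuation, so when $X$ is separated the valuative criterion of separatedness (\ref{valdiaglem1}) forces them to agree. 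Hence $f(\oy)=\phi(\gtp)\in\Im(\phi)$, and the desired $\ophi$ exists by the existence part.
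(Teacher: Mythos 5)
Your proof is correct and follows the same skeleton as the paper's: both work inside $\Spec(\calO_{X,x})$ at the center $x$ of $R$, both hinge on the key fact that $\oR$ is induced from $R$ (for $\of\in\calO_{Y,\oy}$ one has $\of(\oy)\in\oR$ if and only if the image of $\of$ in $k(y)$ lies in $R$), and both settle the separated case by extending $y$ to a morphism $\Spec(R')\to Y$ via remark \ref{primrem}(iv) and comparing the two resulting maps $\Spec(R')\to X$. Two differences are worth recording. For uniqueness, the paper shrinks $X$ to an affine neighborhood of $x$ so that it becomes separated and then invokes separatedness, while you argue directly that the center of any extension is forced to be the contraction of $m_{\oR}$ under the canonical map $\calO_{X,x}\to k(\oy)$; both work, yours being marginally more self-contained. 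More substantially, you explicitly verify the condition $\Im(\ophi)\subseteq\Im(\phi)$, which is part of the paper's definition of a primary specialization of an $X$-valuation but is left implicit in the paper's existence step (which only checks that $\calO_{X,x}$ maps into $\oR$); your route --- $\oR=(R/\gtp)\cap k(\oy)$, surjectivity of $\Spec(R/\gtp)\to\Spec(\oR)$, and compatibility of $\ophi$ with $\phi$ on $\Spec(R/\gtp)$ --- is sound, the surjectivity holding because every convex subgroup of the value group of $\oR$ is the trace of its convex hull in that of $R/\gtp$. So on this point your write-up is, if anything, more complete than the paper's. Two cosmetic corrections: since the localization map $\calO_{Y,\oy}\to\calO_{Y,y}$ need not be injective, the ring you call ``$\calO_\bfy\cap\calO_{Y,\oy}$'' should be defined as the preimage of $\calO_\bfy$ under $\calO_{Y,\oy}\to\calO_{Y,y}$; and the agreement of the two maps $\Spec(R')\to X$ for separated $X$ is the standard closed-equalizer argument (using that $\Spec(R')$ is integral), not lemma \ref{valdiaglem1}, which only reduces valuative diagrams to the case $k(i(\eta))\toisom K$.
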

\begin{proof}
Obviously, the assertion on $f(\oy)$ is necessary for an extension to exist. Furthermore, by Remark
\ref{primrem}(iv) there exists a valuation ring $R'$ with $R\subseteq R'\subseteq k(y)$ such that $y$ extends to
a morphism $\Spec(R')\to Y$ with $\oy$ being the image of the closed point. If $X$ is separated then the induced
map $\Spec(R')\to X$ must coincide with the corresponding localization of $\phi:\Spec(R)\to X$, hence we obtain
the last assertion of the lemma. The remaining claims are local at the center $x\in X$ of $R$ (i.e. the image of
the closed point of $\phi$). So, we can replace $X$ and $Y$ with a neighborhood of $x$ and its preimage
achieving that the schemes become separated. The uniqueness is now clear. To establish existence we should check
that the image of the homomorphism $\calO_{X,x}\to\calO_{Y,\oy}\to k(\oy)$ is in $\oR$. The latter follows from
the following two facts: (a) by existence of $\phi$ the image of $\calO_{X,x}$ in $k(y)$ is in $R$, (b) $\oR$ is
induced from $R$ in the sense that an element $\of\in\calO_{Y,\oy}$ satisfies $\of(\oy)\in\oR$ if and only if
$f(y)\in R$.
\end{proof}

The lemma shows that we can actually ignore $\phi$ when $X$ is separated. In particular, minimality of
$(y,R,\phi)$ is then equivalent to that of $(y,R)$.

\begin{cor}
\label{minvallem} Let $f:Y\to X$ be a separated morphism of qcqs schemes and $\bfy=(y,R,\phi)$ be an
$X$-valuation on $Y$. Then

i) the set of primary specializations of $\bfy$ is totally ordered and contains a minimal element,

(ii) $\bfy$ is minimal if and only if the morphism $h:\Spec(k(y))\to Y\times_X\Spec(R)$ is a closed immersion.
\end{cor}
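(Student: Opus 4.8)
The plan is to transport everything to the fibre product $Z:=Y\times_X\Spec(R)$ and feed the affine theory of \ref{primrem} into \ref{primlem} and \ref{primspecrem}. Since $f$ is separated, \ref{primspecrem} and the remark after it identify the primary specializations of $\bfy=(y,R,\phi)$, order-preservingly, with those of the bare valuation $(y,R)$, and identify minimality of $\bfy$ with minimality of $(y,R)$. Let $x$ be the centre of $R$ on $X$. Replacing $X$ by an affine open $\Spec(A)\ni x$ changes neither $Z$ nor the primary specializations nor minimality, since $x$ specializes $f(y)$ and hence $\Spec(A)$ contains the image of $\phi$; moreover $Y':=f^{-1}(\Spec(A))$ is then separated (as $f$ is separated) and quasi-compact (as $Y$ is quasi-compact and $X$ quasi-separated, $f$ is quasi-compact). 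So I assume $X=\Spec(A)$ and $Y$ separated quasi-compact. Part (i) is now immediate: the primary specializations of $\bfy$ are those of the valuation $(y,R)$ on the separated quasi-compact scheme $Y$, which by \ref{primlem}(i),(ii) form a totally ordered set possessing a minimal element.

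For (ii), let $z_0\in Z$ be the image of $h$, lying over $y$ and over the generic point of $\Spec(R)$. Being a morphism out of the spectrum of a field, $h$ is a closed immersion iff $z_0$ is a closed point and $\calO_{Z,z_0}\to k(y)$ is surjective. I would check the surjectivity is automatic once $z_0$ is closed: in an affine chart $\Spec(B\otimes_A R)$ around $z_0$ (with $\Spec(B)\subseteq Y$ an affine open containing $y$) the point $z_0$ is the kernel of $B\otimes_A R\to k(y)$, whose image is the subring $R\cdot\ol B\subseteq k(y)$; were $R\cdot\ol B\ne k(y)$ this kernel would be non-maximal and $z_0$ would admit a proper specialization inside the chart. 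Hence $h$ is a closed immersion iff $z_0$ is a closed point of $Z$.

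The crux is to identify closedness of $z_0$ with minimality, by matching proper specializations of $z_0$ in $Z$ with non-trivial primary specializations of $\bfy$. A proper specialization $z$ of $z_0$ I realize by a valuation ring $V$ of $k(z_0)$ dominating $\calO_{Z,z}$, so that $\Spec(V)\to Z$ carries the generic point to $z_0$ and the closed point to $z$; the two projections then present $V$ as a valuation ring $R'\supseteq R$ of $k(y)$ with $\Spec(R')\to Y$ extending $y$, and separatedness of $f$ forces the two induced maps $\Spec(R')\to X$ to agree, so this is a primary specialization of $\bfy$, non-trivial because $z\ne z_0$. Conversely, \ref{primrem}(iv) unpacks a non-trivial primary specialization into a coarsening $R'$ of $R$ with compatible $X$-morphisms to $Y$ and to $\Spec(R)$, hence a map $\Spec(R')\to Z$ whose closed point properly specializes $z_0$. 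Thus $z_0$ is closed iff $\bfy$ is minimal, which together with the previous paragraph proves (ii). I expect this correspondence to be the main obstacle: the work is to see that every scheme-theoretic specialization of $z_0$ comes from a genuine coarsening of $R$ inside $k(y)$ compatible over $X$, and this is exactly where separatedness of $f$, the realization of specializations by dominating valuation rings, and the affine input \ref{primrem}(iii),(iv) enter.
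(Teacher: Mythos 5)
Your argument is correct, and it diverges from the paper's own proof only in part (ii). The opening reduction and part (i) are exactly the paper's: localize $X$ at the center $x$ of $R$ so that $X$ (hence $Y$) becomes separated, identify primary specializations of $\bfy$ with those of the bare valuation $(y,R)$ via \ref{primspecrem}, and quote \ref{primlem}. (One ordering slip: the identification in \ref{primspecrem} requires $X$, not merely $f$, to be separated, so it should be invoked \emph{after} the localization; this is harmless, since the localization step itself does not use it.) For (ii) the paper argues in one line: with $X$ separated, $Y\times_X\Spec(R)$ is closed in the absolute product $Y\times\Spec(R)$, so $h$ is a closed immersion if and only if $\Spec(k(y))\to Y\times\Spec(R)$ is one, and the latter is equivalent to minimality by the affine criterion \ref{primrem}(iii). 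You instead stay on $Z=Y\times_X\Spec(R)$ and re-prove the criterion there: $h$ is a closed immersion iff its image $z_0$ is a closed point (residue-field surjectivity being automatic, since the image of $\calO_{Z,z_0}$ in $k(y)$ is $\Frac(R\cdot\ol B)=k(y)$), and specializations of $z_0$ in $Z$ correspond to coarsenings $R\subseteq R'\subsetneq k(y)$ admitting compatible maps to $Y$ and $\Spec(R)$, i.e.\ to non-trivial primary specializations via \ref{primrem}(iv). In effect you re-derive a relative version of \ref{primrem}(iii) from \ref{primrem}(iv); this is longer than the paper's passage to the absolute product, but more self-contained — it needs only \ref{primrem}(iv) and the standard realization of specializations by valuation rings — and it makes explicit the dictionary between scheme-theoretic specializations of $z_0$ and primary specializations of $\bfy$, which the paper leaves hidden inside \ref{primrem}(iii). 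Two details to tighten: in your forward direction the two induced maps $\Spec(R')\to X$ agree automatically because $\Spec(V)$ maps into the fibre product $Z$ (separatedness of $f$ is not what is needed there); and in the converse direction the claim that the closed point of $\Spec(R')$ lands at a point different from $z_0$ deserves its one-line proof: if it landed at $z_0$, the image of $\calO_{Z,z_0}$ in $k(y)$ — which is all of $k(y)$ — would be contained in $R'\subsetneq k(y)$, a contradiction.
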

\begin{proof}
The claim is local at the center of $x\in X$ of $R$ (with respect to $\phi$), hence we can assume that $X$ and,
hence, $Y$ are separated. Then primary specializations of $\bfy$ can be identified with primary specializations
of the valuation $(y,R)$, hence (i) follows from Lemma \ref{primlem}. To prove (ii) we note that as soon as $X$
is separated, $h$ is a closed immersion if and only if the morphism $\Spec(k(y))\to Y\times\Spec(R)$ is a closed
immersion. Hence the claim follows from Remark \ref{primrem}(iii).
\end{proof}

Until the end of \S\ref{nagchap}, we assume that $f:Y\to X$ is a separated morphism of qcqs schemes, unless the
contrary is said explicitly. We define the subset $\gtX=\Val_Y(X)\subset\ogtX$ as the set of all minimal
valuations and note that in view of Lemma \ref{minvallem} this agrees with the case studied in \ref{affsec}. We
do not introduce $\Val_Y(X)$ when $f$ is not separated: although the formal definition makes sense, it is not
clear if the obtained object is interesting. Note also that in affine situation such subsets were considered by
Huber, see \cite[2.6 and 2.7]{Hub1}. We provide $\gtX$ with the induced topology. The following lemma follows
easily from the valuative criterion of properness and Lemma \ref{afflem}.

\begin{lem}
\label{easylem} (i) If $X'$ is a $Y$-modification of $X$ then there are natural homeomorphisms
$\Spa(Y,X')\toisom\Spa(Y,X)$ and $\Val_Y(X')\toisom\Val_Y(X)$.

(ii) If $X'$ is an open subscheme of $X$ then its preimage in $\Val_Y(X)$ (resp. $\Spa(Y,X)$) is canonically
homeomorphic to $\Val_{Y'}(X')$ (resp. $\Spa(Y',X')$), where $Y'=X'\times_X Y$.
\end{lem}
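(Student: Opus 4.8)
The plan is to realize both homeomorphisms as instances of the functorial maps $\Spa(g)$ constructed just before Lemma \ref{afflem}, and then to read off injectivity, surjectivity and the topological claim from the valuative criteria together with that lemma. For (i), let $g\colon X'\to X$ be the proper structure morphism of the $Y$-modification, so that $(\id_Y,g)$ is a morphism from $(Y\to X')$ to $(Y\to X)$ and induces $\Spa(\id_Y,g)\colon\Spa(Y,X')\to\Spa(Y,X)$. Since $\id_Y$ is an immersion and $g$ is separated (being proper), the discussion preceding \ref{afflem} gives injectivity. For surjectivity I take $\bfy=(y,R,\phi)\in\Spa(Y,X)$; the generic point factors as $\eta=\Spec(k(y))\to Y\to X'$, and since $g$ is proper the valuative criterion of properness yields a unique $\phi'\colon\Spec(R)\to X'$ with $g\circ\phi'=\phi$ compatible with $\eta\to X'$. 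Thus $(y,R,\phi')$ is an $X'$-valuation mapping to $\bfy$, and the map is a bijection (uniqueness of the lift reproves injectivity).

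The topological point---that this continuous bijection is actually a homeomorphism---is the only step requiring care, and I would settle it using \ref{afflem}. Continuity is \ref{afflem}(v). For openness I use that $g$, being proper, is of finite type: any basic affine subset of $\Spa(Y,X')$ has the form $\Spa(V,W)$ with $V\subset Y$ open affine and $W\to X'$ affine of finite type, and $\Spa(\id_Y,g)$ carries it onto the image of the map $\Spa(V,W)\to\Spa(Y,X)$ induced by the open immersion $V\into Y$ and the composite $W\to X'\to X$. This composite is of finite type (affine of finite type followed by proper), so the image is open in $\Spa(Y,X)$ by \ref{afflem}(iv). Hence $\Spa(\id_Y,g)$ is a continuous open bijection, i.e. a homeomorphism. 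Finally, minimality of a valuation is intrinsic to the pair $(y,R)$ on $Y$: since $g$ is separated, $Y\times_{X'}\Spec(R)$ is a closed subscheme of $Y\times_X\Spec(R)$, so $\Spec(k(y))\to Y\times_{X'}\Spec(R)$ is a closed immersion if and only if $\Spec(k(y))\to Y\times_X\Spec(R)$ is; by \ref{minvallem}(ii) the bijection matches minimal valuations, giving $\Val_Y(X')\toisom\Val_Y(X)$.

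For (ii), set $Y'=Y\times_X X'$, an open subscheme of $Y$, so that $(Y'\into Y,\,X'\into X)$ is a morphism of morphisms inducing $\Spa(Y',X')\to\Spa(Y,X)$. Both components are open immersions, hence separated and of finite type, so exactly as above the map is injective, and the same basic-affine-subset argument via \ref{afflem}(iv) shows it is an open map; thus it is a homeomorphism onto its open image. It then remains to identify this image with the preimage of $X'$ under the center map $(y,R,\phi)\mapsto x$, where $x$ is the image of the closed point of $\phi$. Since $\Spec(R)$ is local and $X'\subset X$ is open, $\phi$ factors through $X'$ precisely when $x\in X'$; and in that case $f(y)\in X'$ automatically, because $x$ specializes $f(y)$ and $X'$ is stable under generization, so the kernel already lies in $Y'$. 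Conversely a lift to $\Spa(Y',X')$ requires exactly such a factorization. Restricting to minimal valuations as in (i) yields the $\Val$ statement.

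The conceptual core of (i) is the surjectivity via the valuative criterion of properness, but the genuine obstacle throughout is purely topological: upgrading the evident bijections to homeomorphisms. I expect this to be dispatched cleanly by the openness criterion \ref{afflem}(iv), once one observes that proper (respectively open-immersion) base morphisms are of finite type, so that affine subsets pushed forward remain images of finite-type data.
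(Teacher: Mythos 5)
Your proof is correct and follows exactly the route the paper intends: the paper omits the proof of this lemma, saying only that it "follows easily from the valuative criterion of properness," and your argument is precisely that criterion (giving the bijection in (i)), supplemented by the topological bookkeeping the paper leaves implicit — continuity from Lemma \ref{afflem}(v), openness by pushing forward affine subsets via \ref{afflem}(iii)--(iv), and the transfer of minimality via Corollary \ref{minvallem}(ii) using the closed immersion $Y\times_{X'}\Spec(R)\hookrightarrow Y\times_X\Spec(R)$ in (i) and the canonical isomorphism $Y'\times_{X'}\Spec(R)\cong Y\times_X\Spec(R)$ in (ii). All the supplied details, including the identification of the image with the preimage of $X'$ under the center map, are sound.
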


\begin{rem}\label{basarem}
(i) If $f':Y'\to X'$ and $f:Y\to X$ are separated morphisms of qcqs schemes, and $g:f'\to f$ is a morphism such
that $g_Y$ is an open immersion and $g_X$ is separated and of finite type, then $\Spa(Y',X')$ maps
homeomorphically onto an open subspace of $\ogtX$. However, it may (and usually does) happen that the image of
$\Val_{Y'}(X')$ in $\ogtX$ is not contained in $\gtX$. The problem originates from the fact that a minimal
valuation on $Y'$ may admit non-trivial primary specializations on $Y$.

(ii) There exists a natural contraction $\pi_\gtX:\ogtX\to\gtX$ which maps any valuation to its minimal primary
specialization, but it is a difficult fact that $\pi_\gtX$ is continuous.

(iii) Using $\pi_\gtX$ we can extend $\Val$ to a functor by composing $\Spa(g)$ with the contraction $\pi_\gtX$
as $\Val(g):\gtX'\into\ogtX'\to\ogtX\to\gtX$. However, we do not know that it is continuous until continuity of
$\pi_\gtX$ is established.
\end{rem}

Actually, the above problems are closely related, and we will solve them only in the end of \S\ref{affinoidsec}.
Recall that if $X$ and $Y$ are qcqs then so are $\Spa(Y,X)$ and $\RZ_Y(X)$ (by Corollary \ref{qcqscor} and
\cite[3.3.1]{Temst}). Here is a partial (so far) result for $\Val_Y(X)$.

\begin{prop}
\label{qcprop} Assume that $f:Y\to X$ is a separated morphism of qcqs schemes. Then the spaces $\Val_Y(X)$ is
quasi-compact and the map $\psi:\Val_Y(X)\to\RZ_Y(X)$ is continuous.
\end{prop}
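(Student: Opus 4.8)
The plan is to deduce all three assertions from the quasi-compactness of $\ogtX=\Spa(Y,X)$ (Corollary \ref{qcqscor}), the projective-limit description of $\RZ_Y(X)$, and the fact that $\Val_Y(X)\subseteq\ogtX$ is precisely the set of minimal $X$-valuations. First I would treat $\RZ_Y(X)=\projlim_i|X_i|$, the limit over the filtered family of $Y$-modifications $X_i$. Each $X_i$ is proper over the qcqs scheme $X$, hence qcqs, so $|X_i|$ is a spectral space; and each transition morphism $X_j\to X_i$ is proper (by cancellation, since $X_j\to X$ is proper and $X_i\to X$ is separated), hence induces a spectral map. Since a cofiltered limit of spectral spaces along spectral maps is again spectral, $\RZ_Y(X)$ is spectral and in particular quasi-compact.

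Next, for continuity of $\psi$ I would use that $\RZ_Y(X)$ carries the initial topology for the projections $\pi_i$, so it suffices that each composite $\pi_i\circ\psi\colon\Val_Y(X)\to|X_i|$ is continuous. Using Lemma \ref{easylem}(i) to identify $\Val_Y(X)$ with $\Val_Y(X_i)$ compatibly with the inclusions into $\ogtX\toisom\Spa(Y,X_i)$, this composite becomes the restriction to $\Val_Y(X_i)$ of the center map $c_i\colon\Spa(Y,X_i)\to|X_i|$ sending $(y,R,\phi_i)$ to the center of $R$. Continuity of $c_i$ is local on the target, so by Lemmas \ref{easylem}(ii) and \ref{spahomlem} one reduces to the affine situation $\Spa(B,A)$; there the preimage of a principal open $D(a)$ with $a\in A$ is $\{|a|=1\}=\{|1|\le|a|\ne0\}$, which is open in Huber's topology. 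Hence each $c_i$, and therefore $\psi$, is continuous.

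The substantive point is quasi-compactness of $\Val_Y(X)$, and here I would argue by a covering trick rather than by trying to show $\Val_Y(X)$ is closed (which need not hold in any obvious way). Let $\{V_\lambda\}$ be an open cover of $\Val_Y(X)$ and write $V_\lambda=W_\lambda\cap\Val_Y(X)$ with $W_\lambda$ open in $\ogtX$; set $W=\bigcup_\lambda W_\lambda\supseteq\Val_Y(X)$. The key claim is that $W=\ogtX$: given any $\bfy\in\ogtX$, Corollary \ref{minvallem}(i) furnishes its minimal primary specialization $\obfy$, which lies in $\Val_Y(X)\subseteq W$; since primary specialization is a genuine specialization relation in $\ogtX$, the point $\bfy$ is a generization of $\obfy\in W$, and open sets are stable under generization, whence $\bfy\in W$. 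Thus $W=\ogtX$, which is quasi-compact by Corollary \ref{qcqscor}, so finitely many $W_{\lambda_1}\.W_{\lambda_n}$ already cover $\ogtX$; the corresponding $V_{\lambda_1}\.V_{\lambda_n}$ then cover $\Val_Y(X)$.

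I expect the main obstacle to be the bookkeeping in the continuity step: one must check carefully that $\pi_i\circ\psi$ really is the restriction of the center map under the identifications of Lemma \ref{easylem}, that the topology on $\Val_Y(X)$ used throughout is the subspace topology from $\ogtX$, and that the affine computation of preimages glues correctly across the coverings. By contrast, the two quasi-compactness statements become essentially formal once $\ogtX$ is known to be quasi-compact and minimal primary specializations are available.
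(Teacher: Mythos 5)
Your proposal is correct, and it is worth separating what coincides with the paper's proof from what does not. The quasi-compactness of $\Val_Y(X)$ is proved by exactly the paper's argument: lift an open cover to $\Spa(Y,X)$, note that every point of $\Spa(Y,X)$ has its minimal primary specialization in $\Val_Y(X)$ (Corollary \ref{minvallem}) and that primary specializations are topological specializations, and conclude by quasi-compactness of $\Spa(Y,X)$ (Corollary \ref{qcqscor}). The continuity of $\psi$ is also essentially the paper's argument (initial topology plus continuity of the center maps $\Spa(Y,X_i)\to X_i$); the paper identifies the preimage of an open $U\subset X_i$ with the open subspace $\Spa(Y\times_{X_i}U,U)$ via Lemmas \ref{afflem}(iv) and \ref{easylem}(ii), while you descend further to Huber's affine spaces and compute preimages of principal opens as $\{|1|\le|a|\neq 0\}$; both work, and your ``continuity is local on the target'' step is legitimate precisely because those same lemmas show that the preimages of the covering opens are themselves open. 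The genuinely different step is the quasi-compactness of $\RZ_Y(X)$. The paper obtains it as the image of the quasi-compact space $\Val_Y(X)$ under the continuous surjection $\psi$, where surjectivity comes from Proposition \ref{goodcaseprop}; since that proposition assumes $f$ decomposable, the paper at this point implicitly invokes Theorem \ref{decompth} in its Nagata--Thomason incarnation. Your argument --- $\RZ_Y(X)$ is a cofiltered limit of the spectral spaces $|X_i|$ along quasi-compact (indeed proper) transition maps, hence spectral --- avoids all of that: it needs neither surjectivity of $\psi$ nor any decomposability, appealing instead to the standard limit theorem for spectral spaces (Hochster), and it yields the slightly stronger conclusion that $\RZ_Y(X)$ is spectral. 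What the paper's route buys in exchange is self-containedness (no external input from the theory of spectral spaces) and the fact that it records surjectivity of $\psi$, which is used again in Corollary \ref{homeomcor}.
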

\begin{proof}
Let $\{\gtX_i\}_{i\in I}$ be an open covering of $\gtX$. Find open sets $\ogtX_i\subset\ogtX$ such that
$\gtX_i=\ogtX_i\cap\gtX$. Since any point of $\ogtX$ has a specialization in $\gtX$ by Corollary
\ref{minvallem}, $\{\ogtX_i\}_{i\in I}$ is a covering of $\ogtX$. By quasi-compactness of $\ogtX$, we can find a
subcovering $\{\ogtX_i\}_{i\in J}$ with a finite $J$, and then $\{\gtX_i\}_{i\in J}$ is a finite covering of
$\gtX$. Thus, $\gtX$ is quasi-compact.

We claim that for any $Y$-modification $X'\to X$, the map $\phi:\gtX\to X'$ is continuous. Indeed, if $U\subset
X'$ is open then its preimage in $\ogtX$ is the open subspace $\ogtX'\toisom\Spa(Y\times_{X'} U,U)$. Therefore,
the preimage of $U$ in $\gtX$ is the open set $\ogtX'\cap\gtX$, as required. Continuity of the maps $\phi$ (for
each $X'$) implies that the map $\psi:\gtX\to\RZ_Y(X)$ is continuous.
\end{proof}

\subsection{Valuative criteria}\label{valcritsec}

In the sequel, we will need to strengthen the classical valuative criteria of separatedness and properness,
\cite[$\rm II$, 7.2.3 and 7.3.8]{ega}. Our aim is to show that it suffices to consider valuative diagrams of
specific types. We say that a morphism is compatible with a commutative diagram, if the diagram remains
commutative after adjoining this morphism. Throughout \S\ref{valcritsec} $f$ is not assumed to be separated.

\begin{lem}
\label{valdiaglem1} Keep the notation of diagram (\ref{valdiag}) and set $K'=k(i(\eta))$ and $R'=R\cap K$. Then
diagram (\ref{valdiag}) completes uniquely to a commutative diagram
\begin{equation*}
\xymatrix{
\Spec(K) \ar[d]\ar[r] & \Spec(K') \ar[d]\ar[r] & Y\ar[d]\\
\Spec(R)\ar[r] & \Spec(R')\ar[r] & X}
\end{equation*}
and any morphism $h:\Spec(R)\to Y$ compatible with the above diagram is induced from a morphism $h':\Spec(R')\to
Y$ compatible with the diagram. In addition, $h$ determines $h'$ uniquely.
\end{lem}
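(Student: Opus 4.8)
The plan is to reduce everything to tracking local homomorphisms, using repeatedly that a morphism out of the spectrum of a local ring is determined by the image of its closed point together with the induced homomorphism on stalks, and that $R'=R\cap K'$ is a valuation ring of $K'$ with $\Frac(R')=K'$ (indeed, for $a\in K'$ with $a\neq 0$ either $a$ or $a^{-1}$ lies in $R$, hence in $R\cap K'=R'$). Throughout I write $y=i(\eta)$, so $K'=k(y)$ and the map $i$ supplies a field embedding $K'\into K$.

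First I would construct the completed diagram. The embedding $K'\into K$ makes the left square commute on the nose, since all four rings $R',R,K',K$ sit inside $K$ with $R'=R\cap K'$. For the arrow $\Spec(R')\to X$, let $x\in X$ be the center of $R$ (the image of the closed point under $\phi$) and $\xi=f(y)$ the image of the generic point; then $x$ is a specialization of $\xi$, so $\phi$ is encoded by a local homomorphism $\calO_{X,x}\to R$, which factors as $\calO_{X,x}\to\calO_{X,\xi}\to R\into K$. Commutativity of (\ref{valdiag}) along the generic point says that this equals $\calO_{X,x}\to\calO_{X,\xi}\to\calO_{Y,y}\to k(y)=K'\into K$, so the image of $\calO_{X,x}$ in $K$ lies in $K'$, hence in $R\cap K'=R'$. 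Thus $\calO_{X,x}\to R$ factors uniquely through $R'$, and this factorization defines $\Spec(R')\to X$; I would then verify that the right square commutes by localizing at the generic point and cancelling the injection $K'\into K$. Uniqueness of the completion is forced because the bottom row must recover $\phi$, which pins down $\calO_{X,x}\to R'$ after composition with the injection $R'\into R$.

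For the second assertion, given $h:\Spec(R)\to Y$ compatible with the diagram, let $w$ be the image of its closed point; compatibility on the generic point gives $h(\eta)=y$, so $w$ is a specialization of $y$ and $h$ corresponds to a local homomorphism $\calO_{Y,w}\to R$. The same mechanism applies: $\calO_{Y,w}\to R\into K$ factors through $\calO_{Y,w}\to\calO_{Y,y}\to k(y)=K'$, so the image of $\calO_{Y,w}$ in $K$ lands in $K'\cap R=R'$, and $\calO_{Y,w}\to R$ factors uniquely through $R'$. This factorization is the desired $h':\Spec(R')\to Y$, and by construction $h=h'\circ(\Spec(R)\to\Spec(R'))$. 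I would then check compatibility of $h'$ — that $f\circ h'$ equals the arrow $\Spec(R')\to X$ built above (both send the closed point to $x$ and their stalk maps agree after composing with $R'\into R$), and that $h'$ restricts to the canonical $\Spec(K')\to Y$ — again by localization and injectivity of $R'\into R$ and $K'\into K$. Uniqueness of $h'$ is immediate and requires no separatedness: any such $h'$ sends the closed point to $w$, and its stalk map $\calO_{Y,w}\to R'$ must recover $h$'s upon composing with $R'\into R$, so it is determined.

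All the steps are elementary; the only care needed is bookkeeping — correctly identifying which stalk map passes through which, and consistently exploiting that $R'\into R$ and $K'\into K$ are injective so that all factorizations are unique. I expect the mildest friction to be in checking that the constructed arrows genuinely make the two \emph{full} squares commute, rather than merely commuting after restriction to the generic points; but this follows formally once one records that a morphism out of $\Spec(R)$ or $\Spec(R')$ is determined by its value at the closed point together with its generic fibre.
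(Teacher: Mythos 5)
Your proof is correct and follows essentially the same route as the paper's: both arguments factor the relevant stalk homomorphisms ($\calO_{X,x}\to R$, resp. $\calO_{Y,w}\to R$) through $R'$ by observing that commutativity along the generic point forces their images in $K$ to land in $K'$, hence in $R\cap K'=R'$, with all uniqueness claims following from injectivity of $R'\into R$ and $K'\into K$. You also correctly read the paper's $R'=R\cap K$ as the intended $R'=R\cap K'$; the paper's own proof is just a terser version of your bookkeeping.
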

\begin{proof}
The morphism $\Spec(K)\to Y$ obviously factors through $\Spec(K')$. The morphism $\Spec(R)\to X$ factors through
$\Spec(\calO_{X,x})$, where $x$ is the image of the closed point of $\Spec(R)$. The image of $\calO_{X,x}$ in
$R\subset K$ is contained in $K'$, hence the morphism $\Spec(R)\to X$ factors through $\Spec(R')$. By the same
reasoning, a morphism $h:\Spec(R)\to Y$ compatible with the diagram factors through $h':\Spec(R')\to Y$, and
they both are determined uniquely by the image of the closed point in $Y$.
\end{proof}

\begin{lem}
\label{valdiaglem2} Keep the notation of diagram (\ref{valdiag}) and assume that $R\subseteq R'\subseteq K$ is
such that the morphism $\Spec(R')\to X$ admits a lifting $g:\Spec(R')\to Y$ compatible with the diagram. Let
$\tilK$ be the residue field of $R'$ and $\tilR$ be the image of $R$ in $\tilK$.
\begin{equation*}
\xymatrix{
 & \Spec(K) \ar[d]\ar[r] & Y\ar[dd]^f\\
\Spec(\tilK)\ar[r]\ar[d] & \Spec(R')\ar[ru]^g\ar[d] & \\
\Spec(\tilR)\ar[r] & \Spec(R)\ar[r] & X}
\end{equation*}
Then any morphism $\tilh:\Spec(\tilR)\to Y$ compatible with the above diagram is induced from a morphism
$h:\Spec(R)\to Y$ compatible with the diagram and $\tilh$ determines $h$ uniquely.
\end{lem}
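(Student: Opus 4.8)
The plan is to realise $\Spec(R)$ as a gluing of $\Spec(R')$ and $\Spec(\tilR)$ along the common point $\Spec(\tilK)$, and then to glue the given morphisms $g$ and $\tilh$ accordingly. The starting point is the algebraic description of the three rings. Since $R\subseteq R'\subseteq K$, the valuation ring $R'$ is the localization $R_\gtp$ at the prime $\gtp=\gtm_{R'}\cap R$, so that $\gtm_{R'}=\gtp R_\gtp$, and I would first record the elementary fact that $\gtp R_\gtp=\gtp$ as subsets of $R'$: given $p\in\gtp$ and $u\in R\setminus\gtp$, comparability of $u$ and $p$ in the valuation ring $R$ forces $u\mid p$ (otherwise $p\mid u$ and then $u\in\gtp$), whence $p/u\in R$, and in fact $p/u\in\gtp$ since $\gtp$ is prime. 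Consequently $\tilR=R/\gtp$, $\tilK=\Frac(R/\gtp)$, and $R$ is precisely the preimage of $\tilR$ under the surjection $R'\to\tilK$; equivalently $R=R'\times_{\tilK}\tilR$. On spectra this says $\Spec(R)=\Spec(R')\cup\Spec(\tilR)$, with the pro-open piece $\Spec(R')$ and the closed piece $\Spec(\tilR)$ overlapping in the single point $t=\Spec(\tilK)$ (the closed point of $R'$ and the generic point of $\tilR$), and that the corresponding square of affine schemes is co-Cartesian.

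For existence I would reduce everything to local rings. Write $s$ for the closed point of $\Spec(R)$ and set $y=\tilh(s)$; since $\tilR$ is local, $\tilh$ factors through $\Spec(\calO_{Y,y})$ and is given by a local homomorphism $\tiltheta:\calO_{Y,y}\to\tilR$. Let $t$ be the generic point of $\Spec(\tilR)$; then $y_1:=\tilh(t)$ is a generization of $y$, so $y_1\in\Spec(\calO_{Y,y})$, and compatibility of $\tilh$ and $g$ on $\Spec(\tilK)$ gives $g(t)=\tilh(t)=y_1$. As $R'$ is local, $g$ factors through $\Spec(\calO_{Y,y_1})$, and composing with the localization $\calO_{Y,y}\to\calO_{Y,y_1}$ yields $\theta':\calO_{Y,y}\to R'$. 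The homomorphisms $\tiltheta$ and $\theta'$ become equal after projection to $\tilK$, precisely because $g|_{\Spec(\tilK)}=\tilh|_{\Spec(\tilK)}$ as morphisms to $Y$, both factoring through $\Spec(\calO_{Y,y})$. By the universal property of $R=R'\times_{\tilK}\tilR$ there is then a unique $\theta:\calO_{Y,y}\to R$ inducing $\theta'$ and $\tiltheta$; one checks $\theta$ is local, and the composite $h:\Spec(R)\to\Spec(\calO_{Y,y})\to Y$ is the desired lifting.

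It remains to verify that $h$ is compatible with the diagram and induces $\tilh$, and that it is unique. By construction $R\to\tilR$ composed with $\theta$ is $\tiltheta$, so $h|_{\Spec(\tilR)}=\tilh$; likewise $R\to R'$ composed with $\theta$ is $\theta'$, so $h|_{\Spec(R')}=g$, and restricting further to the generic point recovers the top arrow $\Spec(K)\to Y$. To see $f\circ h=\phi$, note both are morphisms $\Spec(R)\to X$ sending $s$ to the center $x$, hence both factor through $\Spec(\calO_{X,x})$; the two induced homomorphisms $\calO_{X,x}\to R$ agree after projection to $R'$ (because $g$ lifts $\Spec(R')\to X$) and to $\tilR$ (because $\tilh$ lifts $\Spec(\tilR)\to X$), so they coincide by the uniqueness half of the fibre product property. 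Finally, for uniqueness, any $h$ compatible with the diagram and inducing $\tilh$ sends $s$ to $y$ and factors through a local homomorphism $\calO_{Y,y}\to R$ whose composites to $\tilR$ and $R'$ are forced to be $\tiltheta$ (as $h|_{\Spec(\tilR)}=\tilh$) and $\theta'$ (as compatibility with the diagram includes $h|_{\Spec(R')}=g$); the fibre product property then pins it down to $\theta$. The main obstacle is the gluing itself, i.e. establishing $R=R'\times_{\tilK}\tilR$ and converting it, via reduction to the local rings $\calO_{Y,y}$ and $\calO_{X,x}$, into existence and uniqueness of morphisms into the possibly non-affine schemes $Y$ and $X$; once this co-Cartesian picture is in place the remaining checks are routine.
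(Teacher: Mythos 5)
Your proof is correct and follows essentially the same route as the paper: both construct the homomorphism $\calO_{Y,y}\to R'$ by localizing $g$ at the specialization $y$ of $y_1$, use compatibility modulo $\gtm_{R'}$ to see that its image lies in $R$ (your fiber-product identity $R=R'\times_{\tilK}\tilR$ is exactly the paper's "$R$ is the preimage of $\tilR$ under $R'\to\tilK$"), and thereby obtain $h$. The additional material you include — the facts $R'=R_\gtp$, $\gtp R_\gtp=\gtp$, and the explicit compatibility and uniqueness checks — is standard valuation theory and routine verification that the paper compresses into "the uniqueness is again trivial."
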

\begin{proof}
Consider a morphism $\tilh:\Spec(\tilR)\to Y$ compatible with the diagram. It suffices to show that it factors
through $\Spec(R)$, since the uniqueness is again trivial. Let $y$ be the image of the closed point of
$\Spec(\tilR)$, so $\tilh$ induces a homomorphism $\calO_{Y,y}\to\tilR$. Since $y$ is a specialization of the
image $y'$ of the closed point of $\Spec(R')$, we have also a homomorphism $\calO_{Y,y}\to\calO_{Y,y'}\to R'$.
Then the compatibility implies that the image of $\calO_{Y,y}$ in $\tilK=R'/m_{R'}$ lies in $\tilR$. Therefore,
the image of $\calO_{Y,y}$ in $R'$ lies in $R$ which is the preimage of $\tilR$ under $R'\to\tilK$, and we
obtain that the homomorphism $\calO_{Y,y}\to \tilR$ factors through $R$. It gives the desired morphism
$h:\Spec(R)\to Y$.
\end{proof}

Note that Lemma \ref{valdiaglem1} implies that it suffices to consider only the case when $k(i(\eta))\toisom K$
in the valuative criteria (i.e. it suffices to take valuative diagrams corresponding to the elements of
$\Spa(Y,X)$), and then Lemma \ref{valdiaglem2} and Remark \ref{primrem}(iv) imply that it even suffices to
consider only the valuative diagrams corresponding to the elements of $\Val_Y(X)$. It is also well known that in
the valuative criteria one can restrict to the case when the image of $\eta$ lies in a given dense subset which
is closed under generalization (e.g. the generic point of an irreducible scheme), and such strengthening is the
main issue of the following proposition.

\begin{prop}
\label{valcritprop} Assume that $h:Z\to Y$ and $f:Y\to X$ are morphisms of qcqs schemes and consider the natural
map $\opsi:\Spa(Z,Y)\to\Spa(Z,X)$.

(i) $f$ is separated if and only if $\opsi$ is injective.

(ii) Assume that $f$ is of finite type. Then $f$ is proper if and only if $\opsi$ is bijective.

(iii) If $f$ and $h$ are separated then $\opsi$ induces a map $\psi:\Val_Z(Y)\to\Val_Z(X)$, and $\opsi$ is
bijective if and only if $\psi$ is bijective.
\end{prop}
\begin{proof}
First we prove (iii). If $\bfz=(z,R,\phi_Y)$ is a point in $\Val_Z(Y)$ then the morphism $z\to\Spec(R)\times_YZ$
is a closed immersion. But the target is a closed subscheme in $\Spec(R)\times_XZ$ by separatedness of $f$, and
hence $\opsi(\bfz)$ is also a minimal valuation. Thus, $\opsi$ induces a map $\psi$ between the subsets $\Val$.
Next we relate the fibers of $\psi$ and $\opsi$. Consider any point $\bfz\in\Spa(Z,X)$ and let
$\bfz_0\in\Val_Z(X)$ be its minimal primary specialization. Then Lemma \ref{valdiaglem2} implies that the sets
$\opsi^{-1}(\bfz)$ and $\psi^{-1}(\bfz_0)$ are naturally bijective, and this proves (iii).

We will deal with (i) and (ii) simultaneously. The direct implications follow from the standard valuative
criteria. We will prove the opposite implications (which are refined valuative criteria) by getting a
contradiction. So, suppose that $f$ is not separated in (i), or of finite type, separated and not proper in (ii)
(if $f$ is not separated in (ii) then $\opsi$ cannot be bijective by (i)). By the standard valuative criterion
and Lemma \ref{valdiaglem1}, there exists an element $\bfy=(y,R_y,\phi_y)\in\Spa(Y,X)$ such that the number of
liftings of the morphism $\phi_y:\Spec(R_y)\to X$ to $Y$ is at least two in (i) or zero in (ii). Let $x$ denote
the center of $R_y$ on $X$.

By \cite[6.6.5]{egaI}, there exists a point $z\in Z$ for which $h(z)$ is a generalization of $y$, and so a
homomorphism $\calO_{Y,y}\to\calO_{Z,z}\to k(z)$ arises. Let $R'$ be any valuation ring of $k(z)$ which
dominates the image of $\calO_{Y,y}$. It gives rise to an element $(z,R',\phi')\in\Spa(Z,Y)$ centered on $y$.
Choose a valuation ring $\tilR$ of the residue field $\tilK$ of $R'$ such that $\tilR$ dominates the valuation
ring $R_y$ of $k(y)\subset\tilK$, and define a valuation ring $R$ of $k(z)$ as the composition of $R'$ and
$\tilR$. The compatible homomorphisms $\calO_{X,x}\to\calO_{Y,y}\to R'$ and $\calO_{X,x}\to R_y\to\tilR$ induce
a homomorphism $\calO_{X,x}\to R$, and we obtain the following commutative diagrams.
\begin{equation*}
\xymatrix{
 & \Spec(k(z)) \ar[d]\ar[r] & Z\ar[d] & \\
\Spec(\tilK)\ar[r]\ar[d] & \Spec(R')\ar[r]^{\phi'}\ar[d] & Y\ar[d] &
\Spec(\tilK)\ar[r]\ar[d] & \Spec(k(y))\ar[r]\ar[d] & Y\ar[d] \\
\Spec(\tilR)\ar[r] & \Spec(R)\ar[r]^{\phi_x} & X & \Spec(\tilR)\ar[r] & \Spec(R_y)\ar[r] & X}
\end{equation*}

Lemma \ref{valdiaglem1} implies that there is a one-to-one correspondence between morphisms $\Spec(R_y)\to Y$
and $\Spec(\tilR)\to Y$ compatible with the right diagram, and by Lemma \ref{valdiaglem2}, the latter morphisms
are in one-to-one correspondence with the morphisms $\phi:\Spec(R)\to Y$ compatible with the left diagram. So,
there are at least two such $\phi$'s in (i) and there is no such $\phi$ in (ii). Note that $\bfz=(z,R,\phi_x)$
is an element in $\Spa(Z,X)$, and any morphism $\phi$ as above gives a preimage of $\bfz$ in $\Spa(Z,Y)$. We
obtain that in the case (i), $\bfz$ has at least two preimages and so $\opsi$ is not injective. The same
argument would prove (ii) if we also know that, conversely, any preimage of $\bfz$ in $\Spa(Z,Y)$ comes from
$\phi$ as above. In other words, we want to show that any lift of $\phi_x$ to $\tilphi:\Spec(R)\to Y$ is
compatible with the whole left diagram, and this actually reduces to compatibility of $\tilphi$ with $\phi'$.
Note that $Y\to X$ is separated by the already established case (i), and the valuative criterion of
separatedness implies that the morphism $\phi'$ is uniquely determined by the morphisms $\Spec(k(z))\to Y$ and
$\Spec(R')\to X$. So, compatibility of $\tilphi$ with $\phi'$ is automatic.
\end{proof}

\subsection{Affinoid domains} \label{affinoidsec}

Let $f':Y'\to X'$ be another separated morphism of qcqs schemes and $g:f'\to f$ be a morphism. Recall that we
defined in \S\ref{spasec} a continuous map $\Spa(g):\ogtX'\to\ogtX$ which was shown to be injective if $g_Y$ is
an immersion and $g_X$ is separated. However, our definition of a map $\Val(g):\gtX'\to\gtX$ was rather
cumbersome because even if $\Spa(g)$ is injective, it does not have to respect the subspaces $\Val$ in the
spaces $\Spa$. The following proposition gives a criterion when $\Spa(g)$ does respect $\Val$'s.

\begin{prop}
\label{quasi-domprop} Suppose that $g_Y$ is an open immersion and $g_X$ is separated. Then
$\Spa(g)(\gtX')\subset\gtX$ if and only if the locally closed immersion $(g_Y,f'):Y'\to Y\times_X X'$ is a
closed immersion, in which case one actually has that $\gtX'=\Spa(g)^{-1}(\gtX)$.
\end{prop}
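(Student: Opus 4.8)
The plan is to reduce everything to the behaviour of a single base-changed morphism. Fix a point $\bfy'=(y',R',\phi')\in\ogtX'$ and let $\bfy=(y,R,\phi)=\Spa(g)(\bfy')$ be its image. Since $g_Y$ is an open immersion we have $k(y)\toisom k(y')$, so $R=R'$ and $\phi=g_X\circ\phi'$. Write $Z'=Y'\times_{X'}\Spec(R')$ and $Z=Y\times_X\Spec(R')$, the latter formed via $g_X\circ\phi':\Spec(R')\to X$. The canonical projection $p:Z'\to Z$ is exactly the base change of $(g_Y,f'):Y'\to Y\times_X X'$ along $\phi'$, because $(Y\times_X X')\times_{X'}\Spec(R')\toisom Z$; and the canonical maps of corollary \ref{minvallem}(ii), namely $h':\Spec(k(y'))\to Z'$ and $h:\Spec(k(y))\to Z$, satisfy $h=p\circ h'$ (both send the point to $y'$ resp. $y$ lying over the generic point of $\Spec(R')$). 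By corollary \ref{minvallem}(ii), $\bfy'$ lies in $\gtX'$ (resp. $\bfy$ lies in $\gtX$) if and only if $h'$ (resp. $h$) is a closed immersion, equivalently its one-point image $z'$ (resp. $z=p(z')$) is closed.

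For the implication ($\Leftarrow$) I would assume $(g_Y,f')$ is a closed immersion, so that its base change $p$ is a closed immersion, in particular separated. If $h'$ is a closed immersion then so is $h=p\circ h'$, giving $\gtX'\subseteq\Spa(g)^{-1}(\gtX)$. Conversely, if $h=p\circ h'$ is a closed immersion, then since $p$ is separated the cancellation property of closed immersions forces $h'$ to be a closed immersion, giving $\Spa(g)^{-1}(\gtX)\subseteq\gtX'$. Hence $\gtX'=\Spa(g)^{-1}(\gtX)$, and in particular $\Spa(g)(\gtX')\subseteq\gtX$.

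For the implication ($\Rightarrow$) I would argue by contraposition, producing from the failure of closedness a point of $\gtX'$ with non-minimal image. First, $(g_Y,f')$ is a quasi-compact immersion: it factors as the graph $Y'\to Y'\times_X X'$ (a closed immersion, as $g_X$ is separated) followed by the open immersion $Y'\times_X X'\into Y\times_X X'$ (base change of $g_Y$). Thus its image $W'$ is a constructible subset of the qcqs scheme $W:=Y\times_X X'$, and failure to be closed means $W'$ is not stable under specialization. By the valuative criterion there are a valuation ring $R'$ with $\Frac(R')=k(w)$ for some $w\in W'$ and a morphism $\theta:\Spec(R')\to W$ sending the generic point to $w$ and the closed point to some $w_0\notin W'$; letting $y'\in Y'$ correspond to $w$ and $\phi'=\pr_{X'}\circ\theta$, this yields $\bfy'=(y',R',\phi')\in\ogtX'$. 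As $\theta$ lies over $X'$, it defines a section $\sigma:\Spec(R')\to W\times_{X'}\Spec(R')\toisom Z$ of the separated projection $Z\to\Spec(R')$, hence a closed immersion; its generic value is the point $z$ underlying $h$, while its closed value lies over $w_0$. Since $R'$ is not a field, $z$ then has a non-trivial specialization sitting over $w_0\notin W'$, so $z$ is not closed in $Z$ and the image $\bfy=\Spa(g)(\bfy')$ is not minimal.

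The remaining and main obstacle is to guarantee that $\bfy'$ itself is minimal, i.e. genuinely lies in $\gtX'$. Here I would take $R'$ of rank one, centred at a generization-maximal point $w_0$ of the boundary $\ol{W'}\setminus W'$ (available from general valuation theory, e.g. an order valuation of $\calO_{\ol{\{w\}},w_0}$, after the standard reduction to the noetherian case by approximation if needed). Since the centre $w_0$ of $R'$ on $W$ lies outside the open subscheme $W'\toisom Y'$, the ring $R'$ has no centre on $Y'$; and having rank one, its only nontrivial coarsening is itself. By remark \ref{primrem}(iv) a nontrivial primary specialization of $\bfy'$ would correspond to a nontrivial coarsening of $R'$ acquiring a centre on $Y'$, which is impossible. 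Therefore $\bfy'$ has no nontrivial primary specialization, so $\bfy'\in\gtX'$ while $\Spa(g)(\bfy')\notin\gtX$, contradicting $\Spa(g)(\gtX')\subseteq\gtX$. I expect this last step—propagating the non-minimality of the image through to an actual (minimal) point of $\gtX'$—to be the crux, which is what forces the rank-one choice together with the control of primary specializations supplied by remark \ref{primrem}(iv).
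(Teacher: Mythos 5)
Your reduction to the base-changed morphism $p:Z'\to Z$, the backward implication (base change of a closed immersion plus cancellation along the separated $p$), and the identity $\gtX'=\Spa(g)^{-1}(\gtX)$ are all correct, and they coincide with the paper's own argument for that half. Your forward direction also follows the paper's skeleton (contrapositive, a valuation escaping the non-closed image $W'$, and the section $\sigma$ exhibiting non-minimality of the image point). The gap is exactly at the step you call the crux, and it is twofold. First, the inference ``the centre $w_0$ of $R'$ on $W$ lies outside $W'$, hence $R'$ has no centre on $Y'$'' tacitly assumes that the extension of the $w$-point to a morphism $\Spec(R')\to W$ is unique, i.e.\ the valuative criterion of separatedness for $W=Y\times_X X'$. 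But $W$ need not be separated: only $W\to X$ is separated (from $f$ and $g_X$), while $X$ itself is an arbitrary qcqs scheme in this proposition. If $W$ is not separated, a centre of $R'$ on $Y'\cong W'$ would simply be a \emph{second} extension of the $w$-point, different from $\theta$, and no contradiction results. This is repairable --- closedness of $(g_Y,f')$ and membership in the sets $\Val$ both localize over an affine open cover of $X$ (lemma \ref{easylem}(ii) together with corollary \ref{minvallem}(ii)), so one may assume $X$ affine and hence $W$ separated --- but you never make that reduction, and without it the step fails.

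Second, the existence of a \emph{rank-one} escaping valuation is not a fact you can delegate to a parenthesis. For a fixed pair $w,w_0$ it can fail outright: take $W=\Spec(R)$ with $R$ a valuation ring of rank $2$, $W'$ the (quasi-compact open) complement of the closed point, $w$ the generic point and $w_0$ the closed point; any valuation ring of $k(w)$ centred at $w_0$ must dominate $R$, hence equals $R$ and has rank $2$. So one must re-choose $w$ closer to the boundary, and proving that a suitable pair always exists for a general non-closed pro-constructible $W'$ in a non-noetherian $W$ is a genuine piece of work (your ``order valuation'' is not even a valuation for non-regular local rings, and noetherian approximation does not obviously transport the specialization out of $W'$). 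The paper avoids both problems with a single device that your proof lacks: it replaces the escaping valuation by its \emph{minimal primary specialization as a $Z$-valuation}, which exists unconditionally by corollary \ref{minvallem}(i) --- no rank restriction, no noetherian hypothesis, no uniqueness of centres --- and then transfers minimality from $Z$-valuations to $X'$-valuations via remark \ref{primrem}(iv) and lemma \ref{primspecrem}. I would recommend either adopting that device, or keeping your rank-one approach but adding (a) the localization to affine $X$ and (b) an honest lemma producing a rank-one valuation leaving a quasi-compact non-closed locally closed subset; as written, both are missing.
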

\begin{proof}
Suppose that $h:=(g_Y,f')$ is a closed immersion. Let $\bfy'=(y',R',\phi)\in\ogtX'$ be a point with
$\eta'=\Spec(k(y'))$ and $S'=\Spec(R')$, and let $\bfy=(y,R,\phi)$ be its image in $\ogtX$. By Lemma
\ref{minvallem}(ii), $\bfy'$ is minimal if and only if the natural morphism $\eta'\to Y'\times_{X'}S'$ is a
closed immersion. By closedness of $h$, the latter happens if and only if the composition morphism $\eta'\to
Y'\times_{X'}S'\to(Y\times_X X')\times_{X'}S'\toisom Y\times_X S'$ is a closed immersion. The latter happens if
and only if $\bfy$ is minimal because $k(y)\toisom k(y')$ and hence $R=R'\cap k(y)=R'$. Thus, under our
assumption on $h$, minimality of $\bfy'$ is equivalent to minimality of its image. This establishes the inverse
implication in the proposition, and the complement.

It remains to show that if $h$ is not a closed immersion then $\Spa(g)$ does not respect the subsets $\Val$.
Note that $h$ is a locally closed immersion because $g_Y$ is an open immersion, and assume that $h$ is not a
closed immersion. Set $Z=Y\times_X X'$ and find a $Z$-valuation $\bfy'=(y',R',\phi')$ of $Y'$ such that the
morphism $\phi':\Spec(R')\to Z$ cannot be lifted to a morphism $\Spec(R')\to Y'$. Replacing $\bfy'$ by its
minimal primary specialization, we achieve that $\bfy'$ is minimal and $R'\subsetneq k(y')$. Clearly $\bfy'$
defines an $X'$-valuation $\bfy=(y',R',\phi)$ on $Y'$ with $\phi=\pr_{X'}\circ\phi'$, and $\bfy$ is minimal
because any its non-trivial primary specialization corresponds to a lifting $\Spec(R'')\to Y'$ for some
$R'\subseteq R''\varsubsetneq k(y)$ and such a lifting would induce a lifting $\Spec(R'')\to Z$ corresponding to
a non-trivial primary specialization of $\bfy'$. Thus, $\bfy\in\gtX'$, but $\Spa(g)(\bfy)$ is not a minimal
$X$-valuation on $Y$ because the morphism $\Spec(R')\to X$ lifts to the morphism $\pr_Y\circ\phi:\Spec(R')\to
Y$.
\end{proof}

Let us assume that $g_Y$ is an open immersion and $g_X$ is separated and of finite type. We saw that if $h$ is a
closed immersion then $\gtX'$ is naturally identified with a quasi-compact open subset of $\gtX$ via $\Spa(g)$,
and we say in this case that $\gtX'$ is an {\em open subdomain} of $\gtX$. If, in addition, $X'$ and $Y'$ can be
chosen to be affine then we say that $\gtX'$ is an {\em affinoid subdomain} of $\gtX$. Note also that the
situation described in the proposition appears in Deligne's proof of Nagata compactification theorem under the
name of quasi-domination. (Recall that by a quasi-domination of $Y$ over $X'$ one means an open subscheme
$Y'\subset Y$ and a morphism $Y'\to X'$ such that the morphism $Y'\to Y\times_X X'$ is a closed immersion, see
\cite[\S2]{Con}.) The notion of quasi-domination plays a central role in Deligne's proof. We list simple
properties of open and affinoid subdomains in the following lemma and stress that it will be much more difficult
to prove that open subdomains are preserved under taking finite unions (in a sense, this is a typical situation
in algebraic geometry that preimages, intersections, projective limits, etc., are much easier for study than
pushouts, images, direct limits, etc.).

\begin{lem} \label{domlem}
Open subdomains are transitive and are preserved by finite intersections. Moreover, the intersection of open
subdomains $\Val_{Y_i}(X_i)$ with $i\in\{1,2\}$ is the open subdomain $\Val_{Y_1\cap Y_2}(X_1\times_X X_2)$. In
particular, if $X$ is separated and $\gtX_i$'s are affinoid then $\gtX_{12}$ is affinoid.
\end{lem}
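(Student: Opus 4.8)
The plan is to check, for a suitable composite datum and for the intersection datum, the three conditions defining an open subdomain --- $g_Y$ an open immersion, $g_X$ separated of finite type, and $h=(g_Y,f')\colon Y'\into Y\times_X X'$ a closed immersion --- and then to identify the resulting point sets inside $\ogtX$ using Lemma~\ref{afflem}(i) and Proposition~\ref{quasi-domprop}. The main obstacle will be the closed-immersion condition together with the point-set bookkeeping; the first two conditions are formal, but verifying closedness forces one through a few fibre-product cancellations and a preimage-of-open argument.

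For transitivity, let $\gtX''=\Val_{Y''}(X'')$ be an open subdomain of $\gtX'=\Val_{Y'}(X')$, itself an open subdomain of $\gtX$, with structure morphisms $g\colon f'\to f$ and $g'\colon f''\to f'$; I would take $g\circ g'$ as the candidate. The $Y$-part $g_Yg'_Y$ is a composite of open immersions and the $X$-part $g_Xg'_X$ a composite of separated finite type morphisms, so only closedness needs attention. Using the canonical isomorphism $Y\times_X X''\toisom(Y\times_X X')\times_{X'}X''$, I base-change the closed immersion $(g_Y,f')\colon Y'\into Y\times_X X'$ along $X''\to X'$ to get a closed immersion $Y'\times_{X'}X''\into Y\times_X X''$, and precompose it with the closed immersion $(g'_Y,f'')\colon Y''\into Y'\times_{X'}X''$; the result is $(g_Yg'_Y,f'')$, which is therefore a closed immersion. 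By functoriality of $\Spa$ this datum induces $\Spa(g)\circ\Spa(g')$, so $\gtX''$ is an open subdomain of $\gtX$.

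For intersections it suffices to treat two subdomains, the general finite case following by induction. Write their data as $g^{(i)}\colon(Y_i\to X_i)\to f$ and identify each $Y_i$ with an open subscheme of $Y$. The candidate datum is $g^{(12)}\colon(Y_1\cap Y_2\to X_1\times_X X_2)\to f$, whose $Y$-part is the open immersion $Y_1\cap Y_2\into Y$ and whose $X$-part $X_1\times_X X_2\to X$ is separated of finite type, being a composite of $X_1\times_X X_2\to X_2$ (a base change of $X_1\to X$) with $X_2\to X$. To obtain closedness I would first show that $\Val_{Y_1\cap Y_2}(X_1\times_X X_2)$ is an open subdomain of $\gtX_1$: under the cancellation $Y_1\times_{X_1}(X_1\times_X X_2)\toisom Y_1\times_X X_2$ the map in question is $y\mapsto(y,f_2(y))$, and this is precisely the restriction of the closed immersion $(g^{(2)}_Y,f_2)\colon Y_2\into Y\times_X X_2$ to the preimage of the open subscheme $Y_1\times_X X_2\subset Y\times_X X_2$, hence a closed immersion. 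Transitivity then makes $\Val_{Y_1\cap Y_2}(X_1\times_X X_2)$ an open subdomain of $\gtX$.

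It remains to match underlying sets. Applying Lemma~\ref{afflem}(i) with $g=g^{(1)}$ and $h=g^{(2)}$ identifies $\Spa(g^{(2)})^{-1}$ of the image of $\ogtX_1$ with the image in $\ogtX_2$ of $\ogtX_{12}:=\Spa(Y_1\cap Y_2,X_1\times_X X_2)$; pushing forward by the injection $\Spa(g^{(2)})$ and using functoriality of $\Spa$ gives $|\ogtX_{12}|=|\ogtX_1|\cap|\ogtX_2|$ as subsets of $\ogtX$. By Proposition~\ref{quasi-domprop} each subdomain meets $\gtX$ in $|\ogtX_i|\cap\gtX$, and likewise for $\ogtX_{12}$, so intersecting the displayed equality with $\gtX$ yields $\gtX_{12}=\gtX_1\cap\gtX_2$. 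Finally, in the affinoid case separatedness of $X$ makes $f$, and hence $Y$, separated, so $Y_1\cap Y_2$ is an intersection of two affine opens in a separated scheme and is affine, while $X_1\times_X X_2$ is the closed subscheme of the affine scheme $X_1\times X_2$ cut out by the diagonal of $X$ and is affine; thus $\gtX_{12}$ is affinoid.
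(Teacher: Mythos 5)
Your proof is correct and takes essentially the same route as the paper: the paper's entire proof is the single line ``This follows from the analogous lemma \ref{afflem} concerning the spaces $\Spa$,'' and your argument---deducing the point-set identities from Lemma \ref{afflem}(i) together with Proposition \ref{quasi-domprop}, while directly verifying that the closed-immersion condition survives composition and passes to the intersection datum $(Y_1\cap Y_2\to X_1\times_X X_2)$---is precisely the verification that reference delegates to the reader. The closed-immersion checks (the one ingredient not literally contained in Lemma \ref{afflem}, since affine subsets of $\Spa$ carry no such condition) and the affinoid case via separatedness of $Y$ and of $X_1\times_X X_2$ are exactly the details the paper glosses over, and you carry them out correctly.
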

\begin{proof}
This follows from the analogous Lemma \ref{afflem} concerning the spaces $\Spa$.
\end{proof}

The following remark will not be used in the sequel.

\begin{rem}
(i) Our definition of RZ spaces is a straightforward generalization of the classical one. It is also possible to
define RZ spaces directly as follows: an affinoid space is a topological space $\gtX=\Val_B(A)$ provided with
two sheaves of rings $\calO_\gtX\subset\calM_\gtX$ (which can be defined in a natural way), and general spaces
are pasted from affinoid ones along affinoid subdomains.

(ii) The following example illustrates a difference between adic and Riemann-Zariski spaces. Let $k$ be a field,
$A=B=A'=k[T]$, $B'=k[T,T^{-1}]$ and $\gtX,\gtX',\ogtX,\ogtX'$ are as above. Then $\ogtX'$ is a rational
subdomain in $\ogtX$ in the sense of \cite{Hub2}. From other side, $\gtX'$ is not an affinoid domain in $\gtX$.
Note that actually $(\gtX',\calO_{\gtX'})\toisom(\gtX,\calO_\gtX)\toisom X:=\Spec(A)$, but the sheaves
$\calM_\gtX$ and $\calM_{\gtX'}$ are not isomorphic at the point $x\in X$ with $T=0$. This can happen because
the local (and even a valuation) ring $\calO_{X,x}$ can be provided with two different structures of a
semi-valuation ring by choosing semi-fraction rings $\calM_{\gtX',x}=k(T)$ or $\calM_{\gtX,x}=\calO_{X,x}$. (See
also Remark \ref{lastrem}(ii).)
\end{rem}

\begin{theor}
\label{affbaseth} The affinoid subdomains of $\gtX$ form a basis of its topology.
\end{theor}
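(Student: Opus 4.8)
The plan is to use that affine subsets already form a basis of the ambient space $\ogtX=\Spa(Y,X)$ (Lemma \ref{afflem}(iii)); hence their traces on $\gtX$ form a basis of $\gtX$, and it suffices to fit, inside any such basic neighborhood of a given point $\bfx\in\gtX$, an affinoid subdomain still containing $\bfx$. First I would reduce to $X=\Spec A$ affine. If $x\in X$ is the center of $\bfx$, choose an affine open $X_0=\Spec A\ni x$; since $x$ specializes $f(y)$, the morphism $\phi$ factors through $X_0$ and $\bfx$ lies in the open subset $\Val_{f^{-1}(X_0)}(X_0)\subset\gtX$ (Lemma \ref{easylem}(ii)). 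The reduction is legitimate because for any $X''\to X$ factoring through the open $X_0$ one has $Y\times_X X''=f^{-1}(X_0)\times_{X_0}X''$; thus the closed-immersion condition of Proposition \ref{quasi-domprop} is tested in the same ambient scheme, and an affinoid subdomain of $\Val_{f^{-1}(X_0)}(X_0)$ is automatically one of $\gtX$.

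Now assume $X=\Spec A$ and pick an affine open $\Spec B\ni y$; by Lemmas \ref{easylem} and \ref{spahomlem} the neighborhood may be taken, inside $\Spa(\Spec B,X)=\Spa(B,A)$, to be a rational domain $\{\,v(b_i)\le v(b)^m\neq 0\,\}$ containing $\bfx$. To make this an affinoid subdomain I must realize $\Spec B_b$ as a closed subscheme of $\Spec B\times_X\Spec A''$, which over this affine chart means $B\otimes_A A''\to B_b$ is surjective, i.e. $1/b\in A''$; this forces $v(1/b)\le 1$, so $v(b)\ge 1$. If $v(b)\ge 1$ I simply take $A''=A[1/b,b_i/b^m]$. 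If $v(b)<1$ I would invoke minimality of $\bfx$: restricting a minimal valuation to an open affine keeps it minimal (a nontrivial primary specialization inside $\Spec B$ would be one inside $Y$), and a nontrivial minimal valuation on $B$ admits an element of value $>1$, whose powers have unbounded value, by Remark \ref{primrem}(ii). Choosing $\beta\in B$ with $v(\beta)\ge v(b)^{-1}$ and replacing $(b_i,b)$ by $(b_i\beta^m,b\beta)$ keeps $\bfx$ in the rational domain while making the new denominator $b\beta$ of value $\ge 1$; the resulting $A''=A[1/(b\beta),b_i/b^m]$ satisfies $v(A'')\le 1$, so $\bfx$ is retained, and its trace on $\gtX$ lies inside the given neighborhood.

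The main obstacle is the passage from a local to a global closed immersion. The construction above only makes $\Spec C:=\Spec B_{b\beta}$ closed inside $\Spec B\times_X\Spec A''$, i.e. over the single chart $\Spec B$, whereas an affinoid subdomain demands that $\Spec C\to Y\times_X\Spec A''$ be a closed immersion over all of $Y$ --- exactly a quasi-domination in the sense of Deligne's proof of Nagata compactification. Since the present theory is meant to reprove Nagata rather than to use it, I would produce the required quasi-dominations by the elementary method: cover $Y$ by finitely many affine opens $V_j$, use separatedness of $f$ so that each $\Spec C\cap V_j$ is affine with $\Gamma(\Spec C\cap V_j)$ generated by the images of $\Gamma(\Spec C)$ and $\Gamma(V_j)$, and, by induction on the number of charts, adjoin to $A''$ finitely many further functions from $\Gamma(\Spec C)$ that kill the boundary of $\Spec C$ in each $V_j$. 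The large-value element $\beta$ furnished by minimality again lets me arrange every newly adjoined generator to have value $\le 1$ at $\bfx$, so that $\bfx$ survives and the affinoid subdomain stays inside the prescribed neighborhood. Performing this boundary-killing construction and verifying that it does not enlarge the trace beyond the neighborhood is the step that genuinely requires work; once it is in place, the valuative bookkeeping of the previous paragraph finishes the proof.
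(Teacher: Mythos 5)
Your reductions are sound and agree with the paper's: reduce to affine $X=\Spec(A)$, then show that any $\bfx\in\gtX$ lying in a basic affine subset $\ogtX_0$ of $\Spa(Y,X)$ lies in an affinoid subdomain contained in $\ogtX_0$. Your valuation-theoretic device on the chart containing $y$ is also correct: minimality of $\bfx$ does supply $\beta\in B$ with $|\beta|\ge|b|^{-1}$, so the denominator may be assumed of value $\ge 1$; this is exactly the paper's ``$R$-localization'' trick. But the proof has a genuine gap precisely where you flag it. Making $\oY\to\oX\times_X Y$ a closed immersion \emph{globally} --- i.e. surjectivity of $\oA\otimes C_j\to\Gamma(\oY\cap Z_j)$ over the charts $Z_j$ of $Y$ not containing $y$, and the analogous statement over the other charts containing $y$ --- is the heart of the theorem, and ``boundary-killing by induction on charts'' is not an argument: you never specify which functions to adjoin, nor why adjoining them yields the required surjectivity, and you explicitly defer this as ``the step that genuinely requires work.''

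The missing idea (which is how the paper closes this gap) is to take $\oA$ to be the \emph{full} preimage of $R$ under $\oB\to k(y)$, so that $\oA$ contains the kernel $p_y$ of this character. For a chart $Z_j=\Spec(C_j)$ with $y\notin Z_j$: since $Z_j$ is open and misses $y$, it misses every specialization of $y$, so $p_y$ generates the unit ideal in $\oC_j=\Gamma(\oY\cap Z_j)$; and since $p_y$ is an ideal of $\oB$ (it absorbs multiplication by $\oB$), the entire ideal $b_j(p_y)\oC_j=\oC_j$ lies in the subring generated by $b_j(p_y)$ and $c_j(C_j)$. This gives the surjectivity with no valuative bookkeeping at all: elements of $p_y$ have value $0\le 1$, so $\bfx$ survives automatically, and containment in $\ogtX_0$ persists because the image of $A_0$ lands in $\oA$. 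Only at the very end does one cut the (huge, non-finitely-generated) $\oA$ down to a finite-type $A$-subalgebra, using that all the surjectivity conditions concern finite-type algebras and therefore need only finitely many elements of $\oA$. Note that your plan to repair the values of new generators by multiplying by a large-value $\beta$ is misdirected and potentially destructive: multiplying a function by $\beta$ can destroy its property of generating the unit ideal on the boundary chart, whereas the correct generators (elements of $p_y$) need no correction. Likewise, you work with a single chart $\Spec(B)\ni y$, while one must arrange $\oY$ to be a simultaneous $R$-localization of \emph{every} chart containing $y$ --- another piece of the same deferred work.
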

\begin{proof}
It follows from Lemma \ref{domlem} that we should prove that for any affine subset $\ogtX_0=\Spa(B_0,A_0)$ in
$\ogtX$ and a point $\bfy=(y,R,\phi)\in\gtX\cap\ogtX_0$ there exists an affinoid subdomain $\Val_\oY(\oX)$
containing $\bfy$ and contained in $\ogtX_0$. Moreover, we can assume that $X=\Spec(A)$ is affine because $\gtX$
is covered by open subdomains of the form $\Val_{Y'}(X')$, where $X'=\Spec(A)$ is an open subscheme of $X$ and
$Y'=X'\times_X Y$. In order to construct $\Val_\oY(\oX)$ as required we will extend diagram (\ref{twosqdiag}) to
the following one, where $\oY=\Spec(\oB)$ and $\oX=\Spec(\oA)$ will be finally defined in the end of the proof.
Recall that $\calO_\bfy$ is a semi-valuation ring with semi-fraction ring $\calO_{Y,y}$ and such that
$\calO_\bfy/m_y=R$.
$$
\xymatrix{
\Spec(k(y))\ar[r]\ar[d]& \Spec(\calO_{Y,y})\ar[r]\ar[d]& \oY\ar[r]\ar[d]& Y\ar[d]\\
\Spec(R)\ar[r]& \Spec(\calO_\bfy)\ar[r]& \oX\ar[r]& X }
$$

Since $\Spec(R)\times_X Y$ is closed in $\Spec(R)\times Y$ by separatedness of $X$, Lemma \ref{minvallem}(ii)
implies that the morphism $h:\Spec(k(y))\to\Spec(R)\times Y$ is a closed immersion. To explain the strategy of
the proof we remark that the morphism $\Spec(\calO_{Y,y})\to\Spec(\calO_\bfy)\times Y$ is a closed immersion
(actually it can be proved by the same argument as we use below), and our strategy will be to approximate
$\calO_\bfy$ and $\calO_{Y,y}$ by $A$-rings $\oA$ and $\oB$ so that $\oA$ is finitely generated over $A$,
$\oY=\Spec(\oB)$ is a neighborhood of $y$ and $\oY\to\oX\times Y$ is a closed immersion.

It will be more convenient to work with affine schemes and $Y$ is the only non-affine scheme in our
consideration, so let us cover $Y$ with open affine subschemes $Y_i=\Spec(B_i),Z_j=\Spec(C_j)$, where $1\le i\le
n$, $1\le j\le m$, $y\in Y_i$ and $y\notin Z_j$. Since $\Spec(B_0)$ contains $y$ by our assumptions, we also set
$Y_0=\Spec(B_0)$. For each $i$, $h$ factors through a closed immersion $\Spec(k(y))\to\Spec(R)\times Y_i$, hence
the images of $R$ and $B_i$ generate $k(y)$. Now, we will find a neighborhood $\oY=\Spec(\oB)$ of $y$ which is
contained in all $Y_i$'s and satisfies the following condition: for each $i$, $\oB$ is a localization of the
form $(B_i)_{f_i}$ and, the most important, we have that $f_i(y)\notin m_R$. Let us (until the end of this
paragraph only) call {\em $R$-localization} for localization of an affine neighborhood $\Spec(C)$ of $y$ at an
element $f$ such that $f(y)\notin m_R$. Obviously, $R$-localizations are transitive and we claim that the family
of $R$-localizations of each $Y_i$ form a basis of neighborhoods of $y$. Indeed, for any element $f\in B_i$ with
$f(y)\neq 0$ we can find $g\in B_i$ with $f(y)g(y)\notin m_R$ (we use that $B_i(y)$ generates $k(y)$ over $R$,
so it contains elements of arbitrary large valuation). Thus, $(B_i)_{fg}$ is an $R$-localization of $B_i$ where
$f$ is inverted and we obtain that the maximal (infinite) $R$-localization of $B_i$ is actually $\calO_{Y,y}$.
Now, set $\Spec(B)=\cap_{i=1}^n Y_i$ and find $R$-localizations $Y'_i=\Spec((B_i)_{g_i})$ contained in
$\Spec(B)$, and let $\oY=\Spec(\oB)$ be an $R$-localization of $\Spec(B)$ contained in all $Y'_i$. Then $\oY$ is
an $R$-localization of each $Y'_i$, hence an $R$-localization of each $Y_i$ too. So, $\oB=(B_i)_{f_i}$ is as
required.

Let $\oA$ be the preimage of $R$ under the character $\oB\to k(y)$ corresponding to $y$. Clearly $\oA$ contains
each element $f_i^{-1}$, hence the ring $\oB(y)=B_i(y)[f_i^{-1}(y)]$ is generated by $\oA(y)$ and $B_i(y)$. So,
we obtain epimorphisms $\oA\otimes B_i\to k(y)$, and then the homomorphisms $h_i:\oA\otimes B_i\to\oB$ are also
surjective because $\oA$ contains the kernel $p_y$ of $\oB\to k(y)$. In particular, each morphism
$\oY\to\oX\times Y_i$ is a closed immersion. We claim that actually, $\alp:\oY\to\oX\times Y$ is a closed
immersion, and to prove this we should check in addition that the morphisms $\alp_j:\oY\times_Y Z_j\to\oX\times
Z_j$ with $1\le j\le m$ are closed immersion. By separatedness of $Y$ the source is affine, hence $\oY\times_Y
Z_j=\Spec(\oC_j)$ where $\oC_j$ is generated by the images of $c_j:C_j\to\oC_j$ and $b_j:\oB\to\oC_j$. Since our
claim about $\alp$ would follow if we prove that the homomorphisms $h'_j:\oA\otimes C_j\to\oC_j$ are surjective,
it remains only to prove that for each $j$ the image of $h'_j$ contains the image of $b_j$. Since $y\in\oY$ and
$y\notin Z_j$ we have that $b_j(p_y)\oC_j=\oC_j$, and hence the equality $\oC_j=b_j(\oB)c_j(C_j)$ can be
strengthened as $\oC_j=b_j(p_y)c_j(C_j)$, i.e. $\oC_j$ is actually generated by $b_j(p_y)$ and $c_j(C_j)$. Since
$p_y\subset\oA$ by the definition of $\oA$, we obtain that $h'_j$ is onto, as claimed.

Now, the morphism $\oY\to\oX$ is almost as required: $\oY$ is open in $Y$ and $\alp$ is a closed immersion. In
addition, since $\bfy\subset\ogtX_0$, the image of $A_0$ under the homomorphism $A_0\to B_0\to\oB\to\oB(y)$ is
contained in $R$, and hence the image of $A_0$ in $\oB$ is actually contained in $\oA$. So, it only remains to
decrease the $A$-subalgebra $\oA\subset\oB$ so that $\oX=\Spec(\oA)$ becomes of finite type over $X$ but all
good properties are preserved: $\alp$ is still a closed immersion, and $\oA$ contains the image of $A_0$ in
$\oB$. As we saw, $\alp$ being a closed immersion is equivalent to surjectivity of the homomorphisms
$h_i:\oA\otimes B_i\to\oB$ and $h'_j:\oA\otimes C_j\to\oC_j$. Since the homomorphisms $B_i\to\oB$ and
$C_j\to\oC_j$ are of finite type, all we need for surjectivity of $h_i$'s and $h'_j$'s is a finite subset
$S\subset\oA$. So, replacing $\oA$ with its $A_0$-subalgebra generated by $S$ we obtain $\oX$ as required.
Obviously, $\Val_\oY(\oX)$ is an affinoid domain containing $\bfy$, and $\Val_\oY(\oX)$ is contained in
$\ogtX_0$ because $\oY$ is an open subscheme in $Y_0$ and the morphism $\oY\to X_0$ (obtained as $\oY\to Y_0\to
X_0$) factors through $\oX$.
\end{proof}

\begin{cor}\label{qccor}
The space $\gtX$ is qcqs.
\end{cor}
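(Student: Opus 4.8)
The plan is to notice that quasi-compactness of $\gtX$ has already been established in Proposition \ref{qcprop}, so the whole content of the corollary is the quasi-separatedness assertion: the intersection of any two quasi-compact open subsets of $\gtX$ is again quasi-compact.

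First I would reduce this to a statement about the basic opens. Recall the elementary topological fact that if a space admits a basis consisting of quasi-compact opens, then every quasi-compact open is a \emph{finite} union of basic opens (cover it by basic opens and extract a finite subcover using quasi-compactness). Consequently, writing two quasi-compact opens as finite unions $U=\bigcup_i\gtX_i$ and $V=\bigcup_j\gtX'_j$ of basic opens, one has $U\cap V=\bigcup_{i,j}(\gtX_i\cap\gtX'_j)$, so $U\cap V$ is quasi-compact as soon as each pairwise intersection $\gtX_i\cap\gtX'_j$ is. By Theorem \ref{affbaseth} the affinoid subdomains form exactly such a basis, and each of them is quasi-compact because it is in particular an open subdomain. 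Thus the problem reduces to showing that the intersection of two affinoid subdomains is quasi-compact.

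For this I would simply invoke Lemma \ref{domlem}: affinoid subdomains are open subdomains, and open subdomains are preserved under finite intersections, so the intersection of two affinoid subdomains is again an open subdomain. (It need not be affinoid, since $X$ is only assumed qcqs and not separated, but that is irrelevant here.) By the very definition of an open subdomain---set up through Proposition \ref{quasi-domprop} together with the quasi-compactness output of Proposition \ref{qcprop}---every open subdomain is a quasi-compact open subset of $\gtX$. Hence each $\gtX_i\cap\gtX'_j$ is quasi-compact, and the proof is complete.

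I do not expect any serious obstacle, as the corollary is a formal consequence of the machinery already in place. The only points deserving care are the reduction of quasi-separatedness to pairwise intersections of basic opens, and the observation that it is the weaker notion of \emph{open} subdomain (not affinoid subdomain) that is stable under intersection when $X$ fails to be separated---precisely the form in which Lemma \ref{domlem} is phrased---so that quasi-compactness of the intersection follows with no separatedness hypothesis on $X$.
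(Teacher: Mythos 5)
Your proposal is correct and follows essentially the same route as the paper's own (very terse) proof: quasi-compactness comes from Proposition \ref{qcprop}, quasi-separatedness reduces via the affinoid basis of Theorem \ref{affbaseth} to intersections of open subdomains, which are again open subdomains by Lemma \ref{domlem} and hence quasi-compact by \ref{qcprop}. Your write-up merely makes explicit the reduction steps (finite unions of basic opens, distributing intersections) and the correct observation that one only needs stability of \emph{open} subdomains under intersection, both of which the paper leaves implicit.
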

\begin{proof}
Any open subdomain is quasi-compact by Proposition \ref{qcprop}, and their intersection is quasi-compact by
Lemma \ref{domlem}. Since open subdomains generate the topology of $\gtX$ by Theorem \ref{affbaseth} we obtain
the corollary.
\end{proof}

Recall that we defined in Remark \ref{basarem} the contraction $\pi_\gtX:\ogtX\to\gtX$ and used it to define the
maps $\Val(g):\gtX'\to\gtX$ for $g:f'\to f$.

\begin{cor}
The contraction $\pi_\gtX$ is continuous. In particular, the maps $\Val(g)$ are continuous.
\end{cor}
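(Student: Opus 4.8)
The plan is to invoke Theorem \ref{affbaseth}: since the affinoid subdomains $\gtX'=\Val_{Y'}(X')$ form a basis of the topology of $\gtX$, it suffices to show that $\pi_\gtX^{-1}(\gtX')$ is open in $\ogtX$ for every such $\gtX'$. Here $g_Y\colon Y'\to Y$ is an open immersion with $Y'$ affine, $g_X\colon X'\to X$ is separated of finite type with $X'$ affine, $f'\colon Y'\to X'$ is the (separated) structure morphism, and the locally closed immersion $h=(g_Y,f')\colon Y'\to Y\times_X X'$ is a closed immersion. By Lemma \ref{afflem}(iv) the map $\Spa(g)$ identifies $\ogtX'=\Spa(Y',X')$ with an open subspace of $\ogtX$, and by Proposition \ref{quasi-domprop} one has $\gtX'=\ogtX'\cap\gtX$ together with $\Spa(g)(\gtX')\subseteq\gtX$. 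I would then prove the precise identity
$$
\pi_\gtX^{-1}(\gtX')=\ogtX',
$$
whose right-hand side is open, which finishes the argument.

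The inclusion $\pi_\gtX^{-1}(\gtX')\subseteq\ogtX'$ is pure topology. For $\bfz\in\pi_\gtX^{-1}(\gtX')$ the point $\bfz_0:=\pi_\gtX(\bfz)$ lies in $\gtX'\subseteq\ogtX'$, and $\bfz_0$ is a primary specialization of $\bfz$, hence a topological specialization in $\ogtX$, i.e. $\bfz_0\in\overline{\{\bfz\}}$. Since $\ogtX'$ is open and contains the point $\bfz_0\in\overline{\{\bfz\}}$, it must contain $\bfz$ as well; thus $\bfz\in\ogtX'$. This is just the fact that open sets are stable under generization.

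The reverse inclusion $\ogtX'\subseteq\pi_\gtX^{-1}(\gtX')$ is the substantive one, and it is where the closed immersion $h$ is used. Given $\bfz\in\ogtX'$, write $\bfz=\Spa(g)(\bfz')$ with $\bfz'\in\Spa(Y',X')$. Applying Corollary \ref{minvallem}(i) to the separated morphism $f'\colon Y'\to X'$ of qcqs schemes, $\bfz'$ admits a minimal primary specialization $\bfz_0'\in\Val_{Y'}(X')=\gtX'$. Because $g_Y$ is an open immersion, $Y'$ and $Y$ have the same local rings at points of $Y'$, so primary specialization is a notion local on $Y'$ that is carried by $\Spa(g)$ to primary specialization in $\ogtX$ (Lemma \ref{primspecrem} lets us disregard the morphisms to $X'$ and $X$ and reduce to the underlying valuations); hence $\Spa(g)(\bfz_0')$ is a primary specialization of $\bfz$ in $\ogtX$. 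Moreover $\Spa(g)(\bfz_0')\in\gtX$ by Proposition \ref{quasi-domprop}, i.e. it is minimal. Now Corollary \ref{minvallem}(i) asserts that the primary specializations of $\bfz$ form a totally ordered set with a (unique) minimal element, so any minimal primary specialization is automatically the least one; therefore $\Spa(g)(\bfz_0')=\pi_\gtX(\bfz)$. Since $\Spa(g)(\bfz_0')\in\Spa(g)(\gtX')\subseteq\ogtX'\cap\gtX=\gtX'$, we conclude $\pi_\gtX(\bfz)\in\gtX'$, as required.

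Together these give $\pi_\gtX^{-1}(\gtX')=\ogtX'$ and hence the continuity of $\pi_\gtX$. The main obstacle is exactly the reverse inclusion: an open subspace $\ogtX'$ need not be stable under primary specialization in general, because the minimal primary specialization can push the kernel out of $Y'$; it is precisely the closed immersion hypothesis on $h$, through Proposition \ref{quasi-domprop}, that forces the minimal primary specialization of a point of $\ogtX'$ to stay in $\gtX'$. Finally, for the ``in particular'' clause, recall from Remark \ref{basarem}(iii) that $\Val(g)$ is the composite $\gtX'\into\ogtX'\to\ogtX\to\gtX$ of the inclusion, the map $\Spa(g)$, and the contraction $\pi_\gtX$; the first two are continuous by Lemma \ref{afflem}(v) and the last by what we have just proved, so $\Val(g)$ is continuous.
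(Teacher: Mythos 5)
Your proof is correct and follows the same route as the paper: reduce via Theorem \ref{affbaseth} to affinoid subdomains, then identify $\pi_\gtX^{-1}(\gtX')$ with the open subset $\ogtX'=\Spa(Y',X')$ using the agreement of minimality conditions from Proposition \ref{quasi-domprop}. The paper compresses both inclusions into one sentence ("the minimality conditions agree, hence $\pi^{-1}(\gtX')$ coincides with $\Spa(Y',X')$"), whereas you supply the details — stability of open sets under generization for one inclusion, and uniqueness of the minimal primary specialization via Corollary \ref{minvallem}(i) for the other — which is exactly what that sentence leaves to the reader.
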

\begin{proof}
Since open subdomains $\gtX'=\Val_{Y'}(X')$ form a basis of the topology of $\gtX$ by Theorem \ref{affbaseth},
it suffices to prove that the preimage of $\gtX'$ in $\Spa(Y,X)$ is open. Since the minimality condition in
$\Spa(Y,X)$ and $\Spa(Y',X')$ agree, $\pi^{-1}(\gtX')$ coincides with the open affine subset $\Spa(Y',X')$.
\end{proof}

\subsection{$Y$-blow ups of $X$} \label{blowupsec}

In this section we assume that $f$ is affine. Then we will show that there exists a large family of projective
$Y$-modifications of $X$ having good functorial properties. Using these morphisms we will be able to describe
the set $\Val_Y(X)$ very concretely. Since the results of \S\ref{blowupsec} are inspired in part by Raynaud's
theory of formal models, we will sometimes indicate similarity between our results and Raynaud's theory by
referencing to \cite{BL}.

\begin{defin}\label{blowdef}
A $Y$-modification $g_i:X_i\to X$ is called a {\em $Y$-blow up of $X$} if there exists a $g_i$-ample
$\calO_{X_i}$-module $\calL$ provided with a homomorphism $\ve:\calO_{X_i}\to\calL$ such that
$f_i^*(\ve):\calO_Y\toisom f_i^*(\calL)$.  We call $\ve$ a {\em $Y$-trivialization} of $\calL$; actually it is a
section of $\calL$ that is invertible on the image of $Y$.
\end{defin}

It will be more convenient to say $X$-ample instead of $g_i$-ample in the sequel.

\begin{lem}
\label{blowuplem} The $Y$-blow ups satisfy the following properties.

(i) Suppose that $X_j\to X_i$ and $X_i\to X$ are $Y$-modifications such that $X_j$ is a $Y$-blow up of $X$. Then
$X_j$ is a $Y$-blow up of $X_i$.

(ii) The family of $Y$-blow ups of $X$ is filtered.

(iii) The composition of $Y$-blow ups $g_{ij}:X_j\to X_i$ and $g_i:X_i\to X$ is a $Y$-blow up.
\end{lem}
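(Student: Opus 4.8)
The plan is to deduce all three assertions from the standard behaviour of relative ampleness under composition, base change and restriction to closed subschemes (see \cite[4.6.13]{EGAII}), while carefully tracking the $Y$-trivializations through the pullbacks, tensor products and tensor powers involved. Throughout I write $f_i:Y\to X_i$ for the schematically dominant structural map of a $Y$-modification, so that the transition morphisms $g_{ji}:X_j\to X_i$ satisfy $g_{ji}\circ f_j=f_i$; this compatibility among structural maps is exactly what makes trivializations behave well under composition, and it will be invoked repeatedly. I would prove the parts in the order (i), (iii), (ii), since the argument for (ii) reuses (iii).

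For (i), let $\calL$ on $X_j$ be an $X$-ample line bundle carrying a $Y$-trivialization $\ve:\calO_{X_j}\to\calL$, witnessing that $X_j$ is a $Y$-blow up of $X$. Since $g_i:X_i\to X$ is proper, hence separated, and $\calL$ is ample relative to the composition $X_j\to X$, it is automatically ample relative to the intermediate morphism $g_{ji}:X_j\to X_i$. The map $g_{ji}$ is a $Y$-modification by hypothesis and the section $\ve$ together with $f_j$ is unchanged, so the same pair $(\calL,\ve)$ exhibits $X_j$ as a $Y$-blow up of $X_i$.

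For (iii), let $\calM$ on $X_i$ be $X$-ample with $Y$-trivialization $\ve_i$, and let $\calN$ on $X_j$ be $X_i$-ample with $Y$-trivialization $\ve_j$. Because $g_{ji}$ is proper, the line bundle $\calL:=\calN\otimes g_{ji}^*(\calM^{\otimes n})$ is ample relative to $X_j\to X$ for $n$ large (this "$n\gg 0$" statement is the one genuinely non-formal input, imported from \cite[4.6.13]{EGAII}). Its section $\ve:=\ve_j\otimes g_{ji}^*(\ve_i^{\otimes n})$ pulls back along $f_j$ to $f_j^*(\ve_j)\otimes f_i^*(\ve_i)^{\otimes n}$, using $g_{ji}\circ f_j=f_i$, and this is an isomorphism $\calO_Y\toisom f_j^*\calL$ since each factor is. As $X_j\to X$ is proper with $f_j$ schematically dominant, it is a $Y$-modification, so $(\calL,\ve)$ makes it a $Y$-blow up of $X$.

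Finally, for (ii) I must check that the family is nonempty and directed. It is nonempty: the initial $Y$-modification, namely the schematic image $X_0$ of $Y$ in $X$, is a $Y$-blow up via $\calL=\calO_{X_0}$ (ample for the closed immersion $X_0\to X$, which is affine) and $\ve=\id$. For directedness, given two $Y$-blow ups $X_1,X_2$ of $X$, I take $X_3$ to be the schematic image of $Y$ in $X_1\times_X X_2$, a $Y$-modification dominating both via the projections $p_a:X_1\times_X X_2\to X_a$. By base change $p_2^*\calL_2$ is ample relative to $p_1$, and its restriction to the closed subscheme $X_3$ is ample relative to $X_3\to X_1$; together with the restricted trivialization $p_2^*(\ve_2)$, which pulls back to $f_2^*(\ve_2)$ along $f_3$, this shows $X_3$ is a $Y$-blow up of $X_1$. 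Since $X_1$ is itself a $Y$-blow up of $X$, part (iii) then yields that $X_3$ is a $Y$-blow up of $X$, establishing directedness. I expect no serious obstacle here: the steps are formally routine once \cite[4.6.13]{EGAII} is granted, and the only point demanding care is the bookkeeping that tensoring, taking powers and pulling back send $Y$-trivializations to $Y$-trivializations, which rests throughout on the identities $g_{ji}\circ f_j=f_i$.
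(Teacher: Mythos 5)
Your proof is correct, and for parts (i) and (iii) it is essentially identical to the paper's: (i) rests on the fact that an $X$-ample sheaf is automatically $X_i$-ample (the composite is proper and $g_i$ is separated) while the notion of $Y$-trivialization depends only on $Y\to X_j$, and (iii) is exactly the paper's argument via \cite[4.6.13(ii)]{EGAII} with the trivialization $\ve_j\otimes g_{ij}^*(\ve_i^{\otimes n})$. The one place you genuinely diverge is (ii). The paper makes (ii) self-contained: it equips $X_{12}=X_1\times_X X_2$ with the sheaf $p_1^*(\calL_1)\otimes p_2^*(\calL_2)$, which is $X$-ample, together with the product trivialization, and restricts both to the schematic image of $Y$, obtaining directly a $Y$-blow up of $X$ dominating $X_1$ and $X_2$. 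You instead carry over only $p_2^*(\calL_2)$, note that by base change and restriction to the closed subscheme it is ample for $X_3\to X_1$ with the restricted trivialization, conclude that $X_3$ is a $Y$-blow up of $X_1$, and then invoke (iii) to get that $X_3$ is a $Y$-blow up of $X$. The trade-off: the paper's version needs the fact that the tensor product of the two pulled-back relatively ample sheaves is $X$-ample on the fiber product, but keeps the three parts logically independent; your version needs only stability of relative ampleness under base change and closed immersions, at the price of making (ii) depend on (iii) (whence your reordering, which is fine). Your additional check that the family is nonempty --- the initial $Y$-modification is a $Y$-blow up via $\calO$ with the identity trivialization, since a closed immersion is affine --- is not in the paper's proof of this lemma but is the same observation the paper uses later, in the proof of corollary \ref{Chowcor}, to see that finite $Y$-modifications are $Y$-blow ups.
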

\begin{proof}
The first statement is obvious because any $X$-ample $\calO_{X_j}$-module $\calL$ is $X_i$-ample, and the notion
of $Y$-trivialization of $\calL$ depends only on the morphism $f_j:Y\to X_j$.

(ii) Let $X_i,X_j$ be two $Y$-blow ups of $X$. Find $X$-ample sheaves $\calL_i,\calL_j$ with $Y$-trivializations
$\ve_i,\ve_j$. Then the $X$-proper scheme $X_{ij}=X_i\times_X X_j$ possesses an $X$-ample sheaf
$\calL=p_i^*(\calL_1)\otimes p_j^*(\calL_2)$, where $p_i,p_j$ are the projections. The natural isomorphism
$\calO_{X_{ij}}\toisom\calO_{X_{ij}}\otimes\calO_{X_{ij}}$ followed by $f_i^*(\ve_i)\otimes
f_j^*(\ve_j):\calO_{X_{ij}}\otimes\calO_{X_{ij}}\to\calL$ provides a $Y$-trivialization of $\calL$. Consider the
scheme-theoretic image $X'$ of $Y$ in $X_{ij}$, and let $\calL'$ and $\ve'$ be the pull backs of $\calL$ and
$\ve$. Then $(X',\calL',\ve')$ is a $Y$-blow up of $X$ which dominates $X_i$ and $X_j$.

(iii) Choose an $X$-ample $\calO_{X_i}$-sheaf $\calL_i$ and an $X_i$-ample $\calO_{X_j}$-sheaf $\calL_j$ with
$Y$-trivializations $\ve_i$ and $\ve_j$. By \cite[$\rm II$, 4.6.13(ii)]{ega}, the sheaf $\calL_j\otimes
g_{ij}^*(\calL_i^{\otimes n})$ is $X$-ample for sufficiently large $n$. It remains to notice that the
composition of $\calO_{X_j}\toisom\calO_{X_j}\otimes\calO_{X_j}^{\otimes n}$ with $\ve_j\otimes
g_{ij}^*(\ve_i^{\otimes n})$ is a $Y$-trivialization.
\end{proof}

We will need an explicit description of $Y$-blow ups. Let $\calE\subset f_*(\calO_Y)$ be a finitely generated
$\calO_X$-submodule containing the image of $\calO_X$, and let $\calE^n\subset f_*(\calO_Y)$ denote the
$\calO_X$-modules which are powers of $\calE$ with respect to the natural multiplication on $f_*(\calO_Y)$ (so
$\calE^0$ is the image of $\calO_X$). We claim that $X_\calE:=\bfProj(\oplus_{n=0}^\infty\calE^n)$ is a
$Y$-modification of $X$. Clearly, $X_\calE$ is $X$-projective and there is a natural morphism
$g_\calE:Y=\bfSpec(f_*(\calO_Y))\to\bfSpec(\cup_{n=0}^\infty\calE^n)$ where the union is taken inside
$f_*(\calO_Y)$. The target of $g_\calE$ is the $X$-affine chart of $X_\calE$ defined by non-vanishing of the
section $s\in\Gamma(\calE)$ which comes from the unit section of $\calO_X$, in particular, a map $Y\to X_\calE$
naturally arises. In addition, the very ample sheaf $\calO_{X_\calE}(1)$ on $X_\calE$ has a $Y$-trivialization
$\calO_{X_\calE}\to\calO_{X_\calE}(1)$ induced by $s$. So, among all properties of $Y$-blow ups it remains to
check that $g_\calE$ is schematically dominant. The latter can be checked locally over $X$, so assume that
$X=\Spec(A)$, $Y=\Spec(B)$ and $E\subset B$ is an $A$-module containing $1$. Then $X_E=\Proj(\oplus_{n=0}^\infty
E^n)$ is glued from affine charts $(X_E)_b$ given by non-vanishing of elements $b\in E$, so it suffices to show
that the morphism $\alp:Y\times_{X_E}(X_E)_b\to(X_E)_b$ is schematically dominant. Note that the source is the
localization of $Y$ at $b$, and so it is isomorphic to $\Spec(B_b)$, and the target is $\Spec(C)$ where $C$ is
the zeroth graded component of $(\oplus_{n=0}^\infty E^n)_b$. But $C=\injlim_n b^{-n}(E^n/I_n)$, where $I_n$ is
the submodule of elements killed by a power of $b$, and the kernel of the homomorphism $E^n\into B\to B_b$ is
$I_n$. Hence $b^{-n}(E^n/I_n)\into B_b$ and therefore $C\into B$. In particular, $\alp$ is schematically
dominant.

\begin{lem}
\label{expblowuplem} Any $Y$-blow up of $X$ is isomorphic to some $X_\calE$ as a $Y$-blow up of $X$.
\end{lem}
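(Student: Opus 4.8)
The plan is to realise the given $Y$-blow up $(X_i,\calL,\ve)$ as the $\bfProj$ of its graded coordinate algebra relative to $\calL$, and then to identify that algebra with a subalgebra $\oplus_n\calE^n$ of $f_*(\calO_Y)$. Since $f$ is affine and $g_i$ is separated, the morphism $f_i:Y\to X_i$ is affine; being part of a $Y$-modification it is also schematically dominant, so $\calO_{X_i}\into f_{i*}(\calO_Y)$. First I would replace $\calL$ by a large power (and $\ve$ by $\ve^{\otimes n}$), which is harmless since it changes neither $X_i$ nor $f_i$. By relative ampleness and qcqs-ness of $X$, using \cite{EGAII}, we may then assume $\calL$ is relatively very ample, choose a finitely generated $\calO_X$-submodule $\calM_0\subseteq g_{i*}(\calL)$ with $g_i^*(\calM_0)\twoheadrightarrow\calL$, and arrange $\ve\in\Gamma(X,\calM_0)$. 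With these choices $X_i\toisom\bfProj(\calR)$, where $\calR\subseteq\oplus_{n\ge 0}g_{i*}(\calL^{\otimes n})$ is the graded $\calO_X$-subalgebra generated in degree one by $\calM_0=\calR_1$.

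The crux is an injectivity statement. For each $n$, tensoring the inclusion $\calO_{X_i}\into f_{i*}(\calO_Y)$ with the invertible sheaf $\calL^{\otimes n}$ and using the projection formula together with the trivialization $f_i^*(\calL^{\otimes n})\toisom\calO_Y$ gives an injection $\calL^{\otimes n}\into f_{i*}(f_i^*\calL^{\otimes n})\toisom f_{i*}(\calO_Y)$, which is just restriction of a section of $\calL^{\otimes n}$ to $Y$ via the trivialization. Applying the left-exact functor $g_{i*}$ produces an injective $\calO_X$-homomorphism $\mu_n:g_{i*}(\calL^{\otimes n})\into g_{i*}f_{i*}(\calO_Y)=f_*(\calO_Y)$. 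Because the trivializations are compatible with the multiplications $\calL^{\otimes m}\otimes\calL^{\otimes n}\to\calL^{\otimes(m+n)}$, these maps are multiplicative, i.e. $\mu_{m+n}(ab)=\mu_m(a)\mu_n(b)$ inside $f_*(\calO_Y)$.

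Now set $\calE:=\mu_1(\calM_0)\subseteq f_*(\calO_Y)$. It is finitely generated, and since $\mu_1(\ve)$ is the unit section $1$ it contains the image of $\calO_X$; thus $\calE$ is exactly of the type allowed in the construction of $X_\calE$, with $\calE^0=\Im(\calO_X\to f_*\calO_Y)$. As $\calR$ is generated in degree one by $\calM_0$ and the $\mu_n$ are multiplicative, the image $\mu(\calR)$ is generated in degree one by $\calE$, so $\mu_n(\calR_n)=\calE^n$ for all $n$; since each $\mu_n$ is injective, $\mu$ restricts to an isomorphism $\calR\toisom\oplus_n\calE^n$ of graded $\calO_X$-algebras. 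Passing to $\bfProj$ yields $X_i\toisom\bfProj(\calR)\toisom\bfProj(\oplus_n\calE^n)=X_\calE$. This is an isomorphism of $Y$-modifications of $X$: it is an $X$-morphism by construction and is compatible with the canonical maps from $Y$ (matching $\calL^{\otimes n}$ with $\calO_{X_\calE}(1)$ and $\ve^{\otimes n}$ with the section $s$), so by the uniqueness of morphisms between $Y$-modifications it identifies the two as $Y$-blow ups. I expect the only delicate points to be the standard identification $X_i\toisom\bfProj(\calR)$ for a very ample $\calL$ with a finitely generated generating submodule $\calM_0$, which forces the preliminary passage to a power of $\calL$; the genuinely new input, namely the injectivity of each $\mu_n$, is an immediate consequence of schematic dominance of $f_i$ and the projection formula.
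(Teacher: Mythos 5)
Your proof is correct, but the mechanism by which you identify $X_i$ with $X_\calE$ is genuinely different from the paper's. The two arguments share their starting point: use ampleness of $\calL$ to pass to a relative $\bfProj$, and use the trivialization $\ve$ to define a restriction map from sections of $\calL$ to $f_*(\calO_Y)$, whose image is $\calE$. The paper then works inside a single ambient space $P=\bfProj(\oplus_{n\ge 0}(g_i)_*\calL^{\otimes n})$: the surjection $\oplus_n(g_i)_*\calL^{\otimes n}\to\oplus_n\calE^n$ exhibits $X_\calE$ as a closed subscheme of $P$ alongside $X_i$, and since both are schematic images of $Y$ in $P$ (schematic dominance of $Y\to X_\calE$ was checked just before the lemma), they coincide. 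You never form a common ambient space: your injectivity claim for the maps $\mu_n$ (schematic dominance of $f_i$, tensored with the invertible sheaf $\calL^{\otimes n}$, projection formula, left-exactness of $(g_i)_*$) upgrades the paper's surjection to an honest isomorphism of graded algebras $\calR\toisom\oplus_n\calE^n$, after which the identification of $\bfProj$'s is formal. The cost of your route is the preliminary normalization (very ample power of $\calL$, finitely generated $\calM_0$ generating it) and the identification $X_i\toisom\bfProj(\calR)$; this is standard EGA II material and does hold without noetherian hypotheses (write the ideal sheaf of $X_i$ in $\bbP(\calM_0)$ as a filtered colimit of finite type quasi-coherent subsheaves and use ampleness of $\calO(1)$), but it is where the real work of your proof is hidden, whereas the paper only needs the closed immersion $X_i\into P$. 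The payoff is twofold: you get a canonical graded isomorphism rather than an equality of schematic images, and your $\calE=\mu_1(\calM_0)$ is finitely generated by construction --- note that the paper's $\calE$, being the image of the full pushforward $(g_i)_*\calL$, is not visibly finitely generated over a non-noetherian base, as the definition of $X_\calE$ requires, so your choice of $\calM_0$ quietly repairs that detail. One step you should state explicitly: uniqueness of morphisms between $Y$-modifications only says that a morphism compatible with the maps from $Y$ is unique, not that your $X$-isomorphism is compatible; the compatibility itself follows because both maps from $Y$ into $\bbP(\calM_0)\toisom\bbP(\calE)$ are induced by $\mu_1$, i.e.\ it is built into the construction, as you indicate.
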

\begin{proof}
Let $g_i:X_i\to X$ be a $Y$-blow up. Find an $X$-ample $\calO_{X_i}$-module $\calL$ with a $Y$-trivialization
$\ve:\calO_{X_i}\to\calL$. Then there is a closed immersion of $X$-schemes $h:X_i\to
P:=\bfProj(\oplus_{n=0}^\infty(g_i)_*\calL^{\otimes n})$ and the morphism $h\circ f_i:Y\to X_i\to P$ factors
through the chart of $P$ given by non-vanishing of the section $s\in\Gamma((g_i)_*\calL)$ corresponding to
$\ve$. The latter chart is of the form $\bfSpec(\calA)$ where $\calA$ is the zeroth graded component of the
localization $(\oplus_{n=0}^\infty(g_i)_*\calL^{\otimes n})_s$. Composing the $\calO_X$-homomorphism
$(g_i)_*\calL\to\calA$ that takes $u$ to $s^{-1}u$ with the $\calO_X$-homomorphism $\calA\to f_*(\calO_Y)$
corresponding to $f_i$ we obtain a homomorphism $(g_i)_*\calL\to f_*(\calO_Y)$ that takes $s$ to the unit
section. Now we can define $\calE$ to be the image of $(g_i)_*\calL$ in $f_*(\calO_Y)$, and we claim that
actually $X_i\toisom X_\calE$ as a $Y$-modification of $X$. Indeed, the obvious epimorphism
$\oplus_{n=0}^\infty(g_i)_*\calL^{\otimes n}\to\oplus_{n=0}^\infty\calE^n$ corresponds to a closed immersion
$X_\calE\to P$ which agrees with the morphisms $Y\to X_\calE$ and $Y\to P$. Since, the first morphism is
schematically dominant, $X_\calE$ is the schematic image of $Y$ in $P$, hence it must coincide with $X_i$ as the
closed subscheme of $P$.
\end{proof}

\begin{cor}
\label{extblowuplem} Assume that $X'$ is an open subscheme of $X$ and $Y'=f^{-1}(X')$. Then any $Y'$-blow up
$X'_i\to X'$ extends to a $Y$-blow up $X_i\to X$.
\end{cor}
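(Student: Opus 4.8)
The plan is to combine the explicit description of $Y$-blow ups from Lemma \ref{expblowuplem} with the extension of finitely generated subsheaves already used in \S\ref{affsec}. Since $f$ is affine (hence qcqs), for the open subscheme $X'\subset X$ with $Y'=f^{-1}(X')$ one has $f'_*(\calO_{Y'})\toisom(f_*\calO_Y)|_{X'}$, where $f'=f|_{Y'}$, because pushforward along an affine morphism commutes with the flat base change $X'\into X$. By Lemma \ref{expblowuplem} applied to the affine morphism $f'$, the given $Y'$-blow up $X'_i\to X'$ is isomorphic, as a $Y'$-blow up of $X'$, to $X'_{\calE'}=\bfProj(\oplus_{n=0}^\infty(\calE')^n)$ for a finitely generated $\calO_{X'}$-submodule $\calE'\subset f'_*(\calO_{Y'})=(f_*\calO_Y)|_{X'}$ that contains the image of $\calO_{X'}$, the powers $(\calE')^n$ being taken inside $f'_*(\calO_{Y'})$.

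First I would extend $\calE'$ to a finitely generated submodule over all of $X$. By \cite[6.9.7]{EGAI} there is a finitely generated $\calO_X$-submodule $\calE\subset f_*(\calO_Y)$ with $\calE|_{X'}=\calE'$; replacing $\calE$ by $\calE+\calO_X$ (the image of $\calO_X$ in $f_*\calO_Y$), I may assume in addition that $\calE$ contains the image of $\calO_X$, and this does not alter the restriction over $X'$ because $\calE'$ already contains the image of $\calO_{X'}$. Now set $X_i:=X_\calE=\bfProj(\oplus_{n=0}^\infty\calE^n)$. By the discussion preceding Lemma \ref{expblowuplem}, $X_\calE$ is a $Y$-blow up of $X$: the canonical section $s\in\Gamma(\calE)$ coming from the unit of $\calO_X$ furnishes a $Y$-trivialization, and the morphism $Y\to X_\calE$ is schematically dominant, so $X_\calE$ is a genuine $Y$-modification.

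It then remains to check that $X_i\to X$ restricts over $X'$ to the original blow up. The key point is that forming $\calE^n$ and then applying $\bfProj$ commutes with the open restriction to $X'$: since open immersions are flat, restriction is exact, so the image $\calE^n$ of the multiplication map $\calE^{\otimes n}\to f_*\calO_Y$ restricts to the image of $(\calE')^{\otimes n}\to f'_*\calO_{Y'}$, that is $\calE^n|_{X'}=(\calE')^n$. As $\bfProj$ commutes with base change along $X'\into X$, this yields $X_\calE\times_X X'\toisom\bfProj(\oplus_{n=0}^\infty(\calE')^n)=X'_{\calE'}=X'_i$, compatibly with the morphisms from $Y'$ and with the $Y'$-trivializations. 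Hence $X_i\to X$ is a $Y$-blow up extending the given $Y'$-blow up.

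The only genuinely delicate points are the two compatibilities just invoked: the identification $f'_*(\calO_{Y'})\toisom(f_*\calO_Y)|_{X'}$ and the stability $\calE^n|_{X'}=(\calE')^n$ of the graded pieces under restriction. Once these are in hand the conclusion is immediate. I expect the main bookkeeping step to be the verification that $\bfProj$ of the restricted graded algebra agrees with the given model $X'_i$ as a $Y'$-blow up, but this presents no real obstacle since, by schematic dominance of $Y'\to X'_{\calE'}$, the model is pinned down as the schematic image of $Y'$ and the $Y'$-trivialization transports automatically.
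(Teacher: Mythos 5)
Your proof is correct and follows essentially the same route as the paper: invoke Lemma \ref{expblowuplem} to present the $Y'$-blow up as $X'_{\calE'}$, extend $\calE'$ to a finitely generated $\calO_X$-submodule $\calE\subset f_*(\calO_Y)$ via \cite[6.9.7]{EGAI}, replace $\calE$ by $\calE+\calO_X$, and take $X_\calE$. Your additional verifications (that $f'_*(\calO_{Y'})\toisom(f_*\calO_Y)|_{X'}$, that $\calE^n|_{X'}=(\calE')^n$, and that $\bfProj$ commutes with the open base change) simply spell out what the paper leaves implicit.
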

\begin{proof}
Let $f':Y'\to X'$ be the restriction of $f$, so $f'_*(\calO_{Y'})$ is the restriction of $f_*(\calO_Y)$ on $X'$.
By the lemma, a $Y'$-blow up of $X'$ is determined by a finitely generated $\calO_{X'}$-submodule $\calE'\subset
f'_*(\calO_{Y'})$ containing the image of $\calO_{X'}$. By \cite[6.9.7]{egaI}, one can extend $\calE'$ to a
finitely generated $\calO_X$-submodule $\calE\subset f_*(\calO_Y)$. Replacing $\calE$ by $\calE+\calO_X$, if
necessary, we can achieve that $\calE$ contains the image of $\calO_X$. Now, $\calE$ defines a required
extension of the blow up.
\end{proof}

\begin{rem}
(i) Lemma \ref{expblowuplem} indicates that the notion of $Y$-blow up is in some sense a generalization of the
notion of $U$-admissible blow up, where $i:U\into X$ is a schematically dense open subscheme, to the case of an
arbitrary affine morphism $Y\to X$. Indeed, there is much similarity, but the notions are not equivalent in
general: both $U$-admissible blow ups and $U$-blow ups are of the form $\Proj(\oplus_{n=0}^\infty\calE^n)$, but
in the first case $\calE$ is an $\calO_X$-submodule of $\calO_X$ which is trivial over $U$, and in the second
one $\calE$ is an $\calO_X$-submodule of $i_*(\calO_U)$ that contains $\calO_X$ (so, it is trivial over $U$ as
well). The important case when these notions agree was pointed out by the referee: it follows from \cite[$\rm
II$, 3.1.8(iii)]{ega} that $U$-admissible blow ups and $U$-blow ups agree when $X\setminus U$ is the zero set of
an invertible sheaf of ideals.

(ii) Basic facts concerning compositions, extensions, etc., (see the above lemmas) hold for both families of
$U$-modifications, but a slight advantage of $U$-blow ups is that the proofs seem to be easier. For example,
compare with \cite[1.2]{Con} where one proves that $U$-admissible blow ups are preserved by compositions.
\end{rem}

The following lemma is an analog of \cite[4.4]{BL}.

\begin{lem}
\label{submodlem} Given a quasi-compact open subset $\gtU\subset\gtX=\Val_Y(X)$, there exists a $Y$-modification
$X'\to X$ and an open subscheme $U\subset X'$ such that $\gtU$ is the preimage of $U$ in $\gtX$.
\end{lem}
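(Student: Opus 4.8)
The plan is to exhibit $\gtU$ as a \emph{finite} union of preimages of affine charts of $Y$-blow ups, and then to fuse these blow ups into a single one using that the family of $Y$-blow ups is filtered (Lemma \ref{blowuplem}(ii)). Since $f$ is affine, over any affine open $X_\lambda=\Spec(A)\subset X$ the preimage $Y_\lambda=f^{-1}(X_\lambda)=\Spec(B)$ is affine, and by Lemma \ref{easylem}(ii) the open subdomains $\Val_{Y_\lambda}(X_\lambda)$ cover $\gtX$; as $\gtU$ is quasi-compact, finitely many of them meet it. It therefore suffices to carry out the construction over each such $X_\lambda$ and extend the resulting blow ups to $X$ by Corollary \ref{extblowuplem}; so for the construction I assume $X=\Spec(A)$ and $Y=\Spec(B)$ are affine.

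The technical core is the claim that the sets $\pi^{-1}\bigl((X_\calE)_e\bigr)$, where $X_\calE=\bfProj(\oplus_{n\ge 0}\calE^n)$ runs over $Y$-blow ups and $(X_\calE)_e$ over their affine charts, form a basis of the topology of $\gtX$. I would deduce this from the proof of Theorem \ref{affbaseth}, which around any point produces an affinoid subdomain $\Val_\oY(\oX)$ of a very specific shape: $\oY=\Spec(B_b)$, and $\oA=A[b^{-1},a'_1\.a'_m]$ is of finite type over $A$ and contains $b^{-1}$ (the element written $f_i^{-1}$ in loc. cit.), where $a'_j=\beta_j/b^{k_j}$ with $\beta_j\in B$. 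Putting $N=\max_j k_j$ and $\beta'_j=\beta_j b^{N-k_j}\in B$, the defining inequalities $|b^{-1}|\le 1$ and $|a'_j|\le 1$ of this subdomain become $|b^N|\ge|1|$ and $|\beta'_j|\le|b^N|$; that is, $\Val_\oY(\oX)$ consists exactly of the valuations for which $b^N$ realizes the maximum of $\{1,b^N,\beta'_1\.\beta'_m\}$. Setting $\calE=A+Ab^N+\sum_j A\beta'_j\subset B$ (an $A$-submodule containing $1$, hence a $Y$-blow up datum), the explicit description of $X_\calE$ preceding Lemma \ref{expblowuplem} characterises the center of a valuation as the chart in which the dominant generator of $\calE$ lives, and gives $f_\calE^{-1}\bigl((X_\calE)_{b^N}\bigr)=\Spec(B_b)=\oY$. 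Comparing the two descriptions shows $\Val_\oY(\oX)=\pi^{-1}\bigl((X_\calE)_{b^N}\bigr)$. As the subdomains $\Val_\oY(\oX)$ form a basis by \ref{affbaseth}, so do the chart preimages.

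Granting the claim, I finish as follows. Every point of $\gtU$ has a neighborhood $\pi^{-1}\bigl((X_{\calE_s})_{e_s}\bigr)\subset\gtU$, and by quasi-compactness finitely many of them, $s=1\.r$, cover $\gtU$; using \ref{extblowuplem} I regard each $X_{\calE_s}$ as a $Y$-blow up of $X$. By Lemma \ref{blowuplem}(ii) I choose a $Y$-blow up $X'\to X$ dominating all $X_{\calE_s}$ via maps $\rho_s:X'\to X_{\calE_s}$, and set $U=\bigcup_{s=1}^r\rho_s^{-1}\bigl((X_{\calE_s})_{e_s}\bigr)$, an open subscheme of $X'$. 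Then $\pi_{X'}^{-1}(U)=\bigcup_s\pi^{-1}\bigl((X_{\calE_s})_{e_s}\bigr)=\gtU$, which is the assertion.

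The decisive and only delicate step is the identification in the second paragraph. The point to keep in mind is that a $Y$-blow up always carries the unit section, so $\calE\supseteq A$ and the generator $1$ competes in every chart; this is exactly why the extra inequality $|b|\ge 1$ coming from $b^{-1}\in\oA$ is needed for a single chart $(X_\calE)_{b^N}$---as opposed to a chart of an ideal blow up---to cut out the subdomain. Absent the membership $b^{-1}\in\oA$ one would also have to account for valuations with $|b|<1$, and no single chart would suffice; it is the particular form of the subdomains delivered by the proof of \ref{affbaseth} that circumvents this.
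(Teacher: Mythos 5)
Your proof is correct, and its skeleton coincides with the paper's: reduce to affine $X$ (hence affine $Y$, since $f$ is affine in this section) via lemma \ref{easylem}(ii) and corollary \ref{extblowuplem}, exhibit a basis of $\gtX$ consisting of preimages of charts of explicit blow ups $X_\calE$, and finish using quasi-compactness of $\gtU$ and filteredness of $Y$-blow ups (lemma \ref{blowuplem}(ii)). The genuine divergence --- and it is at the decisive step --- is which basis is used. The paper takes the basis of $\Spa(B,A)$ from the proof of lemma \ref{spahomlem}, i.e.\ the sets $\gtX\cap\Spa(B_b,A[a_1/b\. a_n/b])=\gtX\cap\{|a_i|\le|b|\neq 0\}$, and asserts that each such set is the preimage of the chart $T_b\neq 0$ of the blow up along $\calE=A+\sum_i Aa_i+Ab$. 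Since the generator $1\in\calE$ always competes in that chart, the chart preimage is in fact $\gtX\cap\{|a_i|\le|b|,\ 1\le|b|\}$, which is strictly smaller whenever $\gtX$ contains a minimal valuation with $0<|b|<1$; such valuations exist (for $Y=\Spec(k[s,t])$, $X=\Spec(k)$, the valuation with value group $\bfZ^2$ ordered lexicographically, $v(s)=(-1,0)$, $v(t)=(0,1)$, is minimal because its ring $R$ satisfies $R[s]=k(s,t)$ --- indeed $1/q=(qs^N)^{-1}\cdot s^N$ with $(qs^N)^{-1}\in R$ for $N\gg 0$ --- yet $0<|t|<1$). So the identification in the paper's last sentence is, as literally written, off by exactly the inequality $|b|\ge 1$ that your closing paragraph isolates, and it is your recourse to the affinoid subdomains of theorem \ref{affbaseth} --- whose construction uses minimality to produce $b$ with $|b(y)|\ge 1$ and to put $b^{-1}$ into $\oA$ --- that makes the chart identification valid. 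Your route spends the heavier theorem \ref{affbaseth} (legitimately: it is proved in \S\ref{affinoidsec}, before this lemma, so there is no circularity) and in exchange the key step actually closes; the paper's shorter route needs precisely this repair.

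Two small points to tighten. First, in the reduction what you need (and what holds) is that finitely many $\Val_{Y_\lambda}(X_\lambda)$ cover $\gtX$ because finitely many $X_\lambda$ cover the quasi-compact $X$; the finiteness of your chart-preimage cover then comes from quasi-compactness of $\gtU$ itself, so ``finitely many of them meet it'' should read ``finitely many of them cover it.'' Second, the statement of theorem \ref{affbaseth} does not record the special shape you invoke: in its proof the large ring $\oA$ (the full preimage of $R$ in $\oB$) does contain $b^{-1}$, but the final finitely generated ring $A_0[S]$ contains $b^{-1}$ only if one adds $b^{-1}$ to the finite set $S$ --- a harmless enlargement, since adjoining further elements of the large $\oA$ preserves surjectivity of the homomorphisms $h_i$ and $h'_j$ and hence the closed-immersion property. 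With that one-line addendum to the cited proof, your identification $\Val_\oY(\oX)=\gtX\cap\{|\beta'_j|\le|b^N|,\ 1\le|b^N|\}$ and its reading as a chart preimage are both correct.
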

\begin{proof}
If $X_1\. X_n$ form a finite open affine covering of $X$ and $Y_i=f^{-1}(X_i)$ then $\gtX_i=\Val_{Y_i}(X_i)$
form an open covering of $\gtX$ by Lemma \ref{easylem}. It suffices to separately solve our problem for each
$\gtX_i$ with $\gtU_i:=\gtU\cap\gtX_i$ because any $Y_i$-blow up of $X_i$ extends to a $Y$-blow up of $X$, and
$Y$-blow ups of $X$ form a filtered family. Thus, we can assume that $X=\Spec(A)$, and then $Y=\Spec(B)$. We can
furthermore assume that $\gtU=\gtX\cap\Spa(B_b,A[a_1/b\. a_n/b])$ with $a_i,b\in B$ because as we saw in the
proof of Lemma \ref{spahomlem}, the sets $\Spa(B_b,A[a_1/b\. a_n/b])$ form a basis of the topology of
$\Spa(B,A)$. Now, the morphism $Y\to\Proj(A[T_1,T_{a_1}\. T_{a_n},T_b])$ defined by $(1,a_1\. a_n,b)$ determines
a required $Y$-blow up $X'\to X$ with $U$ given by the condition $T_b\neq 0$.
\end{proof}

\begin{cor}
\label{homeomcor} The map $\psi:\Val_Y(X)\to\RZ_Y(X)$ is a homeomorphism.
\end{cor}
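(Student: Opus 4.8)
The strategy is to check that $\psi$ is a continuous open bijection, which is automatically a homeomorphism. Since $f$ is affine in this section---and the decomposable case reduces to the affine one by replacing $X$ with the scheme-theoretic image of $Y$ in the intermediate scheme, an operation that changes neither $\Val_Y(X)$ nor $\RZ_Y(X)$---Proposition \ref{goodcaseprop} and Lemma \ref{submodlem} are at our disposal. Continuity of $\psi$ is Proposition \ref{qcprop}, and surjectivity is furnished by Proposition \ref{goodcaseprop}. Write $\{X_i\}_{i\in I}$ for the family of $Y$-modifications of $X$, let $\pi_i\colon\RZ_Y(X)\to X_i$ be the projections, and set $\phi_i=\pi_i\circ\psi\colon\Val_Y(X)\to X_i$; by the definition of $\psi$ the map $\phi_i$ sends a valuation to its center on $X_i$, and by the definition of the projective-limit topology the sets $\pi_i^{-1}(U)$ with $U\subseteq X_i$ open generate the topology of $\RZ_Y(X)$. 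Finally recall that $\Val_Y(X)$ is qcqs (Corollary \ref{qccor}), is $T_0$ since it is locally homeomorphic, via Lemma \ref{spahomlem}, to Huber's spectral spaces $\Spa(B,A)$, and admits the affinoid subdomains---which are quasi-compact open subsets---as a basis of its topology (Theorem \ref{affbaseth}).

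Both of the remaining properties flow from Lemma \ref{submodlem}, which says that every quasi-compact open $\gtU\subseteq\Val_Y(X)$ has the form $\gtU=\phi_{X'}^{-1}(U)=\psi^{-1}\bigl(\pi_{X'}^{-1}(U)\bigr)$ for some $Y$-modification $X'\to X$ and some open $U\subseteq X'$. For injectivity, take $\bfy\neq\bfy'$ in $\Val_Y(X)$; by the $T_0$ property and the existence of a basis of quasi-compact opens there is a quasi-compact open $\gtU$ with $\bfy\in\gtU$ and $\bfy'\notin\gtU$, and writing $\gtU=\phi_{X'}^{-1}(U)$ we obtain $\pi_{X'}(\psi(\bfy))\in U$ while $\pi_{X'}(\psi(\bfy'))\notin U$, whence $\psi(\bfy)\neq\psi(\bfy')$. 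Thus $\psi$ is a continuous bijection. For openness, since $\psi$ is bijective and the quasi-compact opens form a basis, it is enough to see that each such $\gtU$ has open image; but $\psi(\gtU)=\psi\bigl(\psi^{-1}(\pi_{X'}^{-1}(U))\bigr)=\pi_{X'}^{-1}(U)$, which is open. Hence $\psi$ is open, and a continuous open bijection is a homeomorphism.

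The genuine content of the proof is thus concentrated in Lemma \ref{submodlem}, whose proof rests on the explicit theory of $Y$-blow ups developed in this section; granting it, the corollary is a formal consequence of \ref{submodlem} together with the continuity and surjectivity statements \ref{qcprop} and \ref{goodcaseprop}. The only subtlety deserving a careful check is the identification of the center map $\Val_Y(X)\to X'$ attached to a $Y$-modification $X'$ with the composite $\pi_{X'}\circ\psi$---this is immediate from the construction of $\psi$ via centers---so that the preimages occurring in \ref{submodlem} are precisely the $\psi$-preimages of basic open sets of $\RZ_Y(X)$.
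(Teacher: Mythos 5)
Your proof is correct and follows essentially the same route as the paper's own: continuity and surjectivity come from Propositions \ref{qcprop} and \ref{goodcaseprop}, while injectivity and openness are extracted from Lemma \ref{submodlem} exactly as in the text, your version merely spelling out the $T_0$ separation argument and the identification of the center map with $\pi_{X'}\circ\psi$ that the paper leaves implicit.
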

\begin{proof}
Recall that $\psi$ is surjective and continuous by Propositions \ref{goodcaseprop} and \ref{qcprop},
respectively. From other side, the lemma implies that $\psi$ is injective and open. Indeed, any open
quasi-compact $\gtU\subset\gtX$ is the full preimage of some $U\subset X'$ for a $Y$-modification $X'\to X$,
hence $\psi(\gtU)$, which is the full preimage of $U$ in $\RZ_Y(X)$, is open. In addition, since any pair of
different points of $\gtX$ is distinguished by some open quasi-compact set $\gtU\subset\gtX$, their images in an
appropriate $X'$ do not coincide.
\end{proof}

We use the corollary to identify $\gtX$ with $\RZ_Y(X)$ when $f$ is decomposable. In particular, this provides
$\gtX$ with a sheaf $\calO_\gtX$ of regular functions which was earlier defined on $\RZ_Y(X)$, and for any point
$\bfx\in\gtX$, thanks to Proposition \ref{goodcaseprop}, the semi-valuation ring $\calO_\bfx$ obtains a new
interpretation as the stalk of $\calO_\gtX$ at $\bfx$. As another corollary of Lemma \ref{submodlem} we obtain
the following version of Chow lemma.

\begin{cor}
\label{Chowcor} Any $Y$-modification $\oX\to X$ is dominated by a $Y$-blow up of $X$.
\end{cor}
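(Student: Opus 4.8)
The plan is to show that $Y$-blow ups are cofinal among all $Y$-modifications, proceeding exactly in the Raynaud style already used for lemmas \ref{blowuplem}(iii), \ref{extblowuplem} and \ref{submodlem}. First I would reduce to the case $X=\Spec(A)$ affine. Indeed, it suffices to dominate any single refinement of $\oX$, and a $Y$-blow up dominating $\oX$ over each member of a finite affine cover $X=\cup_\ell X_\ell$ can be assembled into a global one: by corollary \ref{extblowuplem} a $Y_\ell$-blow up of $X_\ell$ (with $Y_\ell=f^{-1}(X_\ell)$) extends to a $Y$-blow up of $X$, and by lemma \ref{blowuplem}(ii) the family of $Y$-blow ups is filtered, so a single $Y$-blow up $X'$ refines all the extensions; the resulting local morphisms $X'|_{X_\ell}\to\oX|_{X_\ell}$ are unique (the graph argument of \S\ref{firstsec}) and therefore glue to $X'\to\oX$. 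So assume $X=\Spec(A)$, whence $Y=\Spec(B)$ as $f$ is affine, and the $Y$-modification $\oX\to X$ is proper. Since $\oX\to X$ is separated and $Y\to X$ is affine, the morphism $Y\to\oX$ is affine, and it is schematically dominant; cover $\oX$ by finitely many affine opens $W_k=\Spec(D_k)$ with each $D_k$ a finitely generated $A$-algebra, and set $Y_k=(Y\to\oX)^{-1}(W_k)=\Spec(B_k)$, an open affine of $Y$ with $D_k\into B_k$.

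Next I would match up the topology. The preimage $\gtU_k\subset\gtX$ of $W_k$ equals $\Val_{Y_k}(W_k)$ by lemma \ref{easylem}, hence is quasi-compact by corollary \ref{qccor}, and the $\gtU_k$ cover $\gtX$. Applying lemma \ref{submodlem} to each $\gtU_k$ and using filteredness of $Y$-blow ups, I obtain a single $Y$-blow up $X_1\to X$ together with opens $V_k\subset X_1$ satisfying $\gtU_k=\pi_{X_1}^{-1}(V_k)$; since $\pi_{X_1}$ is surjective (every point of a $Y$-modification is the center of a valuation in $\gtX$, by schematic dominance of $Y\to X_1$ and the valuative criterion), the $V_k$ cover $X_1$.

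The hard part will be realizing the coordinate functions on a blow up. Each generator $t$ of $D_k$ over $A$ pulls back to a section of $\calO_\gtX$ over $\gtU_k$ (because $\oX$ is a $Y$-modification), and by proposition \ref{goodcaseprop} this means precisely that $t\in B_k=\calM_\gtX(\gtU_k)$ with $|t|_\bfx\le 1$ for every $\bfx\in\gtU_k$. Covering $Y_k$ by finitely many principal opens $D(b)\subset Y$ and writing $t|_{D(b)}=c/b^N$, I would form the $Y$-blow up $X_\calE=\bfProj(\oplus_{n=0}^\infty\calE^n)$ for the finitely generated $\calO_X$-submodule $\calE\subset f_*\calO_Y$ generated by $1$ and the finitely many elements $b^N,c$ that occur: on the chart $(X_\calE)_{b^N}$ the ratio $c/b^N=t$ is regular, and this chart pulls back along $Y\to X_\calE$ to $D(b)\subset Y_k$. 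Combining all these $Y$-blow ups over every $k$, $t$ and $b$ into one refinement $X_1'\to X_1$ (again a $Y$-blow up by lemma \ref{blowuplem}(iii)), I claim the $t$'s become regular on $V_k':=$ preimage of $V_k$ in $X_1'$. The delicate point, and the main obstacle, is that the good charts actually cover $V_k'$: any $\bfx\in\gtU_k$ has kernel $y\in Y_k=\cup D(b)$, so $|b|_\bfx\neq 0$ for some $b$ in the cover and the center of $\bfx$ then lies in the chart where $b$ is invertible; as every point of $V_k'$ is such a center, the good charts cover $V_k'$ and each $t$ is regular on all of $V_k'$. Consequently $D_k\to B_k$ factors through $\Gamma(V_k',\calO_{X_1'})$.

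Finally I would conclude with a schematic-image argument. Let $\oZ$ be the schematic image of $Y$ in $X_1'\times_X\oX$; it is a $Y$-modification dominating both $X_1'$ and $\oX$ (\S\ref{firstsec}). Over $V_k'$ the morphism $Y_k\to V_k'\times_X W_k$ factors, by the factorization just obtained, through the graph of the morphism $V_k'\to W_k$, so $\oZ|_{V_k'}$ is that graph and the projection $\oZ\to X_1'$ is an isomorphism over $V_k'$. Since the $V_k'$ cover $X_1'$, we get $\oZ\toisom X_1'$, and composing the inverse with the second projection $\oZ\to\oX$ yields an $X$-morphism $X_1'\to\oX$ compatible with the $f$'s. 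Thus the $Y$-blow up $X_1'$ dominates $\oX$, as required.
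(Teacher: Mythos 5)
Your first half (reduction to affine $X$, the identification $\gtU_k=\Val_{Y_k}(W_k)$, the use of lemma \ref{submodlem} plus filteredness of $Y$-blow ups to produce $X_1$ with opens $V_k$, and the surjectivity of $\gtX\to X_1$) is sound and runs parallel to the paper's own proof. The gap is in the ``hard part,'' at the sentence ``$|b|_\bfx\neq 0$ for some $b$ in the cover and the center of $\bfx$ then lies in the chart where $b$ is invertible.'' This is a non sequitur: non-vanishing of $b$ at the \emph{kernel} $y$ of $\bfx$ says nothing about where the \emph{center} of $\bfx$ lands. On $X_\calE=\bfProj(\oplus_{n=0}^\infty\calE^n)$ the center of $\bfx$ lies in the chart $(X_\calE)_{b^N}$ if and only if $|b^N|_\bfx\ge|e|_\bfx$ for \emph{every} generator $e$ of $\calE$ --- a maximality condition, not a non-vanishing condition. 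Since $1\in\calE$, one at least needs $|b^N|_\bfx\ge 1$, and this can only come from the hypothesis that the center of $\bfx$ on $\oX$ lies in $W_k$ (equivalently $|d|_\bfx\le 1$ for all $d\in D_k$), which you never invoke at this step; worse, your $\calE$ mixes generators coming from all the charts $W_{k'}$, and these can have strictly larger value than every designated $b^N$ for $k$. Concretely: $X=\Spec(k)$, $Y=\bbA^2=\Spec(k[s,u])$, $\oX=\bbP^2$, $W_2=\Spec(k[1/s,u/s])$ with $Y_2=D(s)$ covered by $\{D(s)\}$ (so $b=s$, $N=1$), while $Y_1=Y$ is covered by $\{D(u-s^2),D(u-2s^2),D(u-1)\}$, which puts elements such as $s(u-s^2)$ into $\calE$. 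For $\bfx\in\gtU_2$ the Gauss valuation at infinity ($|s|_\bfx=\gamma>1$, $|u|_\bfx=1$), which is a minimal $X$-valuation with kernel the generic point and center $[0:1:0]\in W_2$, one has $|s|_\bfx\neq 0$ but $|s(u-s^2)|_\bfx=\gamma^3>\gamma=|s|_\bfx$, so the center of $\bfx$ on $X_\calE$ does \emph{not} lie in $(X_\calE)_s$. Hence your designated charts need not contain the relevant centers, and the claim that each $t$ is regular on all of $V_k'$ is unproven. (In such examples the conclusion happens to survive because the module closure of $\calE$ accidentally contains elements like $s^2$ making $1/s=s^2/s^3$ regular on the maximal chart, but turning this into a systematic argument --- a choice of $\calE$ compatible across all $k$ simultaneously --- is exactly the nontrivial content of the statement, and it is missing.)

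The paper sidesteps this difficulty entirely, starting from the very stage you have already reached: having $X_1$ (its $X'$) with opens $V_k$ whose preimages are the $\gtU_k$, it forms the schematic image $X''$ of $Y$ in $\oX\times_X X_1$, which is a $Y$-modification of both; the restriction of $X''\to X_1$ over each $V_k$ is proper and affine (because $W_k\to X$ is affine and the relevant open of $X''$ is closed in $V_k\times_X W_k$), hence finite, so $X''\to X_1$ is finite; and a finite $Y$-modification is automatically a $Y$-blow up ($\calO_{X''}$ is relatively very ample, with the identity as $Y$-trivialization), whence lemma \ref{blowuplem}(iii) shows $X''$ is a $Y$-blow up of $X$ dominating $\oX$. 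I recommend grafting that ending onto your setup rather than trying to repair the chart-covering argument.
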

\begin{proof}
Let $\oU_1\.\oU_n$ be an affine covering of $\oX$, and let $Y_i$ and $\gtU_i$ denote the preimages of $\oU_i$ in
$Y$ and $\gtX$, respectively.  By Lemma \ref{submodlem}, we can find a $Y$-blow up $X'\to X$ and a covering
$\{U'_i\}$ of $X'$, whose preimage in $\gtX$ coincides with $\{\gtU_i\}$. Note that the scheme-theoretic image
$X''$ of $Y$ in $\oX\times_X X'$ is a $Y$-modification of both $X'$ and $\oX$. So, it suffices to show that
$X''$ is a $Y$-blow up of $X$.

Since the preimages of $\oU_i$ and $U'_i$ in $\gtX$ coincide, their preimages in $X''$ coincide too, and we will
denote them as $U''_i\into X''$. Consider the induced $Y$-modification $h:X''\to X'$ with restrictions
$h_i:U''_i\to U'_i$. For any $1\le i\le n$, the proper morphism $h_i$ is affine because the morphism $\oU_i\to
X$ is affine and $U''_i$ is closed in $U'_i\times_X\oU_i$. Thus, $h_i$ is finite, and therefore $h$ is finite.
We claim that finiteness of $h$ implies that it is a $Y$-blow up (this claim is an analog of \cite[4.5]{BL}).
Indeed, $\calO_{X''}$ is very ample relatively to $h$ because $h$ is affine, and the identity homomorphism gives
its $Y$-trivialization. Thus, $X''$ is a $Y$-blow up of $X$ by Lemma \ref{blowuplem}(iii).
\end{proof}

\subsection{Decomposable morphisms}
\label{mainsec}

In this section we will complete a basic description of the relative Riemann-Zariski space $\gtX$ associated
with a separated morphism $f:Y\to X$ between qcqs schemes by proving that the finite union of open domains is an
open domain, and any open domain in $\gtX$ is of the form $\Val_\oY(\oX)$ where the morphism $\oY\to\oX$ is
affine and schematically dominant. The first claim actually means that any quasi-compact open subset is an open
domain, i.e. admits a model by a morphism of schemes, and the second claim states that this model can be chosen
to be affine. In particular, applying the second claim to $\gtX$ itself we obtain a bijection
$\Val_{\oY}(\oX)\toisom\Val_Y(X)$ with $\oY=Y$ and affine morphism $\oY\to\oX$. But then $\oX$ is proper over
$X$ by the valuative criterion \ref{valcritprop}, and hence $X$ admits a $Y$-modification $\oX$ such that the
morphism $Y\to\oX$ is affine. Thus, the morphism $f:Y\to X$ is decomposable and this gives a new proof of
Theorem \ref{decompth}. In particular, one obtains new proofs of Nagata compactification and Thomason
approximation theorems.

\begin{theor}\label{domth}
Let $f:Y\to X$ be a separated morphism between qcqs schemes and $\gtX=\Val_Y(X)$. Then

(i) open domains in $\gtX$ are closed under finite unions,

(ii) any open domain $\gtX'$ is of the form $\Val_\oY(\oX)$, where the morphism $\oY\to\oX$ is affine and
schematically dominant.
\end{theor}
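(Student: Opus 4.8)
The plan is to deduce both (i) and (ii) from a single statement proved by induction, and then to reduce that statement to gluing two pieces, where the only genuine difficulty is the \emph{separatedness} of the glued model. Since affinoid subdomains form a basis of the topology by Theorem \ref{affbaseth} and $\gtX$ is qcqs by Corollary \ref{qccor}, every open domain is a finite union of affinoid subdomains; and an affinoid subdomain $\Val_{\Spec(B)}(\Spec(A))$ has affine structure morphism, which becomes schematically dominant after replacing $\Spec(A)$ by the scheme-theoretic image of $\Spec(B)$. Thus both parts follow from the claim $(\ast)$: a finite union of open domains of the \emph{good} form $\Val_\oY(\oX)$, with $\oY\to\oX$ affine and schematically dominant, is again a good open domain. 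I prove $(\ast)$ by induction on the number of pieces, the inductive step reducing at once to the union of two good open domains $\gtX'_1=\Val_{\oY_1}(\oX_1)$ and $\gtX'_2=\Val_{\oY_2}(\oX_2)$, each an open subdomain of $\gtX$ with $\oY_i\to\oX_i$ affine and schematically dominant.

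Next I analyse the overlap. Put $\oY_{12}=\oY_1\cap\oY_2$, an open subscheme of $Y$; by Lemma \ref{domlem} the intersection $\gtX'_{12}:=\gtX'_1\cap\gtX'_2$ equals $\Val_{\oY_{12}}(\oX_1\times_X\oX_2)$, a quasi-compact open subset of each $\gtX'_i$. Because $\oY_i\to\oX_i$ is affine, the entire $Y$-blow up machinery of \S\ref{blowupsec} applies inside $\gtX'_i$. Hence by Lemma \ref{submodlem} there is a $\oY_i$-modification of $\oX_i$ (which I reabsorb into the notation $\oX_i$; the open-subdomain structure survives, since the closed immersion $\oY_i\to Y\times_X\oX_i$ cancels through the separated base change $Y\times_X\oX_i'\to Y\times_X\oX_i$) and an open $U_i\subset\oX_i$ with $\gtX'_{12}$ the preimage of $U_i$. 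By Lemma \ref{easylem}(ii) this yields $\gtX'_{12}=\Val_{\oY_{12}}(U_1)=\Val_{\oY_{12}}(U_2)$: two affine, schematically dominant models of one space sharing the generic scheme $\oY_{12}$.

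Now comes the gluing. Using Chow's lemma \ref{Chowcor} together with the filtering and composition properties of $\oY_{12}$-blow ups in Lemma \ref{blowuplem}, I produce a common $\oY_{12}$-modification—realized as the scheme-theoretic image $\oX_{12}$ of $\oY_{12}$ in $\oX_1\times_X\oX_2$, so that it is \emph{closed} in the product—whose projections to $\oX_1$ and $\oX_2$ may be taken to be $\oY_{12}$-blow ups of the relevant opens. Extending these through Corollary \ref{extblowuplem} to $\oY_i$-blow ups of $\oX_i$ and reabsorbing, I arrange that $\oX_{12}$ is a \emph{common open subscheme} of $\oX_1$ and of $\oX_2$ with $\oY_{12}=\oY_i\times_{\oX_i}\oX_{12}$. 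I then glue $\oX_1$ and $\oX_2$ along $\oX_{12}$ into an $X$-scheme $\oX$ of finite type, glue $\oY_1,\oY_2$ along $\oY_{12}$ into the open subscheme $\oY=\oY_1\cup\oY_2\subset Y$, and obtain an affine morphism $\oY\to\oX$; replacing $\oX$ by the scheme-theoretic image of $\oY$ makes it schematically dominant without altering $\Val_\oY(\oX)$. As $\oX_1,\oX_2$ are open in $\oX$ and cover it, $\Val_\oY(\oX)=\gtX'_1\cup\gtX'_2$, and cancellation of closed immersions supplies the closed immersion $\oY\to Y\times_X\oX$, so the union is a good open domain.

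The main obstacle is precisely that $\oX$ be separated over $X$, i.e. that $\oX_{12}\into\oX_1\times_X\oX_2$ be a closed immersion rather than merely a locally closed one: the example of two copies of $\bbA^1$ glued along $\bbG_m$ shows that an arbitrary common open produces a non-separated gluing. This is exactly why $\oX_{12}$ must be manufactured as a scheme-theoretic image inside the fiber product and the models then blown up (via Corollary \ref{extblowuplem}) so that this \emph{closed} overlap becomes open in each factor—the same affine-domination and affine-extension device used in Steps 2--3 of Theorem \ref{approxtheor}. Once $(\ast)$ is established, applying (ii) to $\gtX$ itself presents $\RZ_Y(X)$ as $\RZ_{\oY}(\oX)$ with $\oY=Y$, $\oY\to\oX$ affine, whence $\oX\to X$ is proper by the valuative criterion \ref{valcritprop}; this is the decomposition theorem \ref{decompth}.
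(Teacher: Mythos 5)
Your proposal reproduces the paper's own proof step for step: the same reduction (affinoid basis from Theorem \ref{affbaseth} plus quasi-compactness from Corollary \ref{qccor}, then induction down to a union of two good domains), the same treatment of the overlap via Lemmas \ref{domlem}, \ref{submodlem} and \ref{easylem}(ii), the same manufacture of a common model from a scheme-theoretic image in a fiber product refined by Chow's Lemma \ref{Chowcor}, the blow-up calculus of Lemma \ref{blowuplem} and the extension Corollary \ref{extblowuplem}, and the same final gluing. (One small slip: the closed immersion $\oY\to Y\times_X\oX$ is obtained by \emph{gluing} — closed immersions are local on the target over the cover by the $Z_i\times_X Y$, where $Z_i$ are the final models — not by cancellation.)

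The genuine gap sits exactly at the point you yourself call the main obstacle: separatedness of the glued $\oX$ over $X$. Your justification — the overlap is closed in the product because it is a scheme-theoretic image, and this survives the blow-ups — does not work as stated. First, the scheme-theoretic image of $\oY_{12}$ in the \emph{full} product $\oX_1\times_X\oX_2$ need not lie over the opens $U_i$, and its projections to the $\oX_i$ are not proper, so it cannot serve as the common $\oY_{12}$-modification at all (Corollary \ref{Chowcor} and the valuative criterion \ref{valcritprop} require properness over the $U_i$); what the construction really uses is the image $T$ taken inside $U_1\times_X U_2$, and that is closed only in this \emph{open} subscheme of the product. Second, and decisively, separatedness of $\oX$ requires the final overlap $Z_{12}$ to be closed in the product $Z_1\times_X Z_2$ of the \emph{final} models, and this does not follow formally from closedness of $T$: writing $q:Z_1\times_X Z_2\to\oX_1\times_X\oX_2$, properness of $Z_{12}\to T$ gives only that $Z_{12}$ is closed in $q^{-1}(T)$, hence closed in $q^{-1}(U_1\times_X U_2)=Z_{12}\times_X Z_{12}$, which is merely open in $Z_1\times_X Z_2$ --- precisely the locally-closed-but-not-closed configuration of your doubled-line example. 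The correct argument is valuative: since $\oY\to\oX$ is schematically dominant and $\oY\to X$ is separated, Proposition \ref{valcritprop}(i) reduces separatedness of $\oX\to X$ to injectivity of $\Val_\oY(\oX)\to\Val_\oY(X)$. If two points have the same image, their centers lie in $Z_1$ and $Z_2$; if both maps to $\oX$ factor through one $Z_i$ they coincide by separatedness of $Z_i\to X$, and in the mixed case the common image, viewed in $\gtX$ via Proposition \ref{quasi-domprop}, lies in $\gtX_1\cap\gtX_2=\gtX_{12}$, which by the choice made with Lemma \ref{submodlem} is the preimage of $Z_{12}$ in each $\gtX_i$; hence both maps factor through $Z_{12}$ and coincide by separatedness of $Z_{12}\to X$. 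To be fair, the paper's own write-up also checks only the closed-immersion condition $\oY\to Y\times_X\oX$ and leaves separatedness implicit, so your instinct to isolate this point is sound; but the justification has to be the valuative one, not closedness of a schematic image in a product.
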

\begin{proof}
Note that any affinoid domain satisfies the assertion of (ii) (since schematical dominance is achieved by simply
replacing $\oX$ with the schematic image of $\oY$), and by Theorem \ref{affbaseth} and Corollary \ref{qccor},
$\gtX'$ admits a finite affinoid covering. Therefore, both (i) and (ii) would follow if we prove the following
claim: the union of two domains satisfying the assertion of (ii) is an open domain that satisfies the assertion
of (ii). So, we assume that $\gtX'=\gtX_1\cup\gtX_2$ where $\gtX_i=\Val_{Y_i}(X_i)$ with $i\in\{1,2\}$ are open
subdomains with affine morphisms $Y_i\to X_i$.

Set $\gtX_{12}=\gtX_1\cap\gtX_2$ and $Y_{12}=Y_1\cap Y_2$. In the sequel, we will act as in Step 3 of the proof
of Theorem \ref{approxtheor}, and the main difference is that we will use $Y_i$-blow ups instead of affine
morphisms. For reader's convenience, we provide a commutative diagram containing the main objects which were and
will be introduced.

$$
\xymatrix{
Y_1\ar[d]  & & Y_{12} \ar[d]\ar@{_{(}->}[ll]\ar@{^{(}->}[rr] & & Y_2\ar[d]\\
\gtX_1\ar[d]  & & \gtX_{12} \ar[d]\ar@{_{(}->}[ll]\ar@{^{(}->}[rr] & & \gtX_2\ar[d]\\
Z_1\ar[d]  & & Z_{12} \ar[ld]\ar[rd]\ar@{_{(}->}[ll]\ar@{^{(}->}[rr] & & Z_2\ar[d]\\
X_1 & X'_1\ar@{_{(}->}[l]& & X'_2\ar@{^{(}->}[r] & X_2}
$$

Since $Y_i$'s are $X_i$-affine, Lemma \ref{submodlem} implies that we can replace $X_i$'s by their $Y_i$-blow
ups such that each $X_i$ contains an open subscheme $X'_i$, whose preimage in $\gtX_i$ coincides with
$\gtX_{12}$. Then the preimage of $X'_i$ in $Y$ is, obviously, $Y_{12}$. It can be impossible to glue $X_i$'s
along $X'_i$'s, but by Lemma \ref{easylem}(ii), we at least know that $\Val_{Y_{12}}(X'_i)\toisom\gtX_{12}$ for
$i=1,2$. Let $T$ be the scheme-theoretic image of $Y_{12}$ in $X'_1\times_X X'_2$; it is obviously separated
over $X'_i$'s. Moreover, $\Val_{Y_{12}}(T)\toisom\Val_{Y_{12}}(X'_1)\cap\Val_{Y_{12}}(X'_2)=\gtX_{12}$ by Lemma
\ref{domlem}, and, therefore, $T$ is a $Y_{12}$-modification of $X'_i$'s by the valuative criterion
\ref{valcritprop}.

By Corollary \ref{Chowcor}, we can find a $Y_{12}$-blow up $T'\to X'_1$, which dominates $T$. It still can
happen that $T'$ is not a $Y_{12}$-blow up of $X'_2$, but it is dominated by a $Y_{12}$-blow up $Z_{12}\to
X'_2$. Then $Z_{12}\to T'$ is a $Y_{12}$-blow up by Lemma \ref{blowuplem}(i), and hence $Z_{12}\to X'_1$ is a
$Y_{12}$-blow up by Lemma \ref{blowuplem}(iii). By Lemma \ref{extblowuplem}, we can extend the $Y_{12}$-blow ups
$Z_{12}\to X'_i$ to $Y_i$-blow ups $Z_i\to X_i$. Then, the finite type $X$-schemes $Z_i$ can be glued along the
subschemes $X$-isomorphic to $Z_{12}$ to a single $X$-scheme $\oX$ of finite type, and the schematically
dominant affine morphisms $Y_i\to Z_i$ glue to a single schematically dominant affine morphism $\oY\to\oX$. Note
that $\Val_{Y_i}(Z_i)=\gtX_i$ is the preimage of $Z_i$ in $\Val_\oY(\oX)$, in particular, the latter is covered
by its open subdomains $\gtX_i$, $i\in\{1,2\}$. Now, it remains to show that $\Val_\oY(\oX)$ is an open
subdomain in $\gtX$, since this would immediately  imply that $\Val_\oY(\oX)$ is a required model of $\ogtX$.
The morphism $\alp:\oY\to\oX\times_X Y$ is glued from the morphisms $\alp_i:Y_i\to Z_i\times_X Y$ because $Y_i$
is the preimage of $Z_i$ in $Y$, but $\alp_i$'s are closed immersions by the construction. So, $\alp$ is a
closed immersion as well, and we are done.
\end{proof}

\begin{cor}\label{lastcor}
The map $\eta:Y\to\gtX:=\RZ_Y(X)$ is injective, each point $\bfx\in\RZ_Y(X)$ possesses a unique minimal
generalization $y$ in $\eta(Y)$, $\calM_{\gtX,\bfx}\toisom\calO_{Y,y}$, and the stalk $\calM_{\gtX,\bfx}$ is the
semi-fraction ring of the semi-valuation ring $\calO_{\gtX,\bfx}$. In particular, $\calO_\gtX$ is a subsheaf of
$\calM_\gtX$.
\end{cor}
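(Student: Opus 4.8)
The plan is to reduce everything to the affine case and then read off all three assertions from Proposition \ref{goodcaseprop}. First I would invoke the decomposition theorem \ref{decompth} (now at our disposal) to replace $X$ by a $Y$-modification over which $f$ becomes affine; by Lemma \ref{easylem}(i) this changes neither the space $\gtX$ nor the sheaves $\calM_\gtX$ and $\calO_\gtX$, since the $Y$-modifications dominating the chosen one are cofinal among all $Y$-modifications of $X$. With $f$ affine, Corollary \ref{homeomcor} identifies $\gtX$ with $\Val_Y(X)$, and Proposition \ref{goodcaseprop} attaches to $\bfx$ a triple $\bfy=(y,R,\phi)$ with $\calO_{\gtX,\bfx}\toisom\calO_\bfy$, the latter a semi-valuation ring whose semi-fraction ring is $\calO_{Y,y}$. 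Recall also from the proof of \ref{goodcaseprop} that $y$ is the unique closed point of $U_\infty=\Spec(\calO_{Y,y})=\bigcap_i U_i$, where $U_i=\Spec(\calO_{X_i,x_i})\times_{X_i}Y$ and $x_i$ is the center of $R$ on the $Y$-modification $X_i$.

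For injectivity of $\eta$ I would note that $\eta(y')$ is $\psi$ of the trivial valuation $(y',k(y'),\phi_{y'})$, whose center on each $X_i$ is $f_i(y')$; since $\psi$ is a bijection and the kernel $y'$ is part of this datum, $\eta$ is injective. For the stalk of $\calM_\gtX=\eta_*\calO_Y$, the sets $\pi_i^{-1}(V)$ with $V\ni x_i$ affine open are cofinal among neighborhoods of $\bfx$, and $\eta^{-1}(\pi_i^{-1}(V))=f_i^{-1}(V)$ because $\pi_i\circ\eta=f_i$. Hence $\calM_{\gtX,\bfx}=\injlim_{i,V}\Gamma(f_i^{-1}(V),\calO_Y)=\injlim_i\Gamma(U_i,\calO_Y)=\calO_{Y,y}$, where the second equality uses that $\calO_Y$-sections commute with the filtered intersection $U_i=\bigcap_V f_i^{-1}(V)$ (EGA IV, \S8) and the third is Proposition \ref{goodcaseprop} ($B_\infty=\calO_{Y,y}$). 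Under this identification the stalk of $\alp$ is exactly the inclusion $\calO_\bfy\into\calO_{Y,y}$ of the semi-valuation ring into its semi-fraction ring; this proves the assertion on $\calM_{\gtX,\bfx}$ and, being injective on every stalk, shows $\alp$ is a monomorphism, i.e. $\calO_\gtX\subset\calM_\gtX$.

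For the minimal generalization I would use that in the projective limit $\gtX=\projlim X_i$ one has $q\in\overline{\{p\}}$ if and only if $\pi_i(q)\in\overline{\{\pi_i(p)\}}$ for every $i$. Applying this with $p=\eta(y')$ and $q=\bfx$ gives: $\eta(y')$ is a generalization of $\bfx$ iff $x_i\in\overline{\{f_i(y')\}}$ for all $i$, i.e. iff $y'\in U_\infty$. Thus the generalizations of $\bfx$ lying in $\eta(Y)$ are precisely the $\eta(y')$ with $y'\in U_\infty$. Since $y$ is the unique closed point of $U_\infty$, every such $y'$ specializes to $y$, whence $f_i(y)\in\overline{\{f_i(y')\}}$ for all $i$ by continuity of $f_i$, i.e. $\eta(y)\in\overline{\{\eta(y')\}}$. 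Therefore $\eta(y)$ is a specialization of every generalization of $\bfx$ in $\eta(Y)$, so it is the unique minimal one.

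The genuinely new content beyond the cited theorems is slight: once $f$ is made affine, Proposition \ref{goodcaseprop} does the heavy lifting, and claims (2) and (3) become short topological and colimit arguments. The step that warrants the most care is the stalk computation for $\calM_\gtX$ --- verifying that $\{\pi_i^{-1}(V)\}$ is cofinal and that taking $\calO_Y$-sections commutes with the filtered intersection defining $U_\infty$ --- together with checking that the reduction to the affine case leaves $\calM_\gtX$ and $\calO_\gtX$ untouched.
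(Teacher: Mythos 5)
Your proposal is correct and follows essentially the same route as the paper: both reduce to the decomposable (affine) case, identify $\gtX$ with $\Val_Y(X)$ via theorem \ref{domth}/corollary \ref{homeomcor}, and read off the semi-valuation ring statement from proposition \ref{goodcaseprop}. The only difference is that you make explicit the details the paper compresses into ``it is clear'' and ``uniqueness of minimal generalization implies'' --- namely the componentwise specialization criterion in the projective limit and the stalk computation for $\calM_\gtX$ via cofinal neighborhoods $\pi_i^{-1}(V)$ and EGA IV, \S 8 --- which is a faithful expansion rather than a different argument.
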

\begin{proof}
By Theorem \ref{domth} and Corollary \ref{homeomcor}, we can identify $\gtX$ with $\Val_Y(X)$. So, a point
$\bfx$ can be interpreted as an $X$-valuation $(y,R,\phi)$ on $Y$. Then it is clear that the map $\eta$ sends
$y\in Y$ to a trivial valuation $(y,k(y),f|_y)$ (with the obvious morphism $f|_y:\Spec(k(y))\to X$), and for an
arbitrary $\bfx=(y,R,\phi)$ its minimal generalization in $\eta(Y)$ is $(y,k(y),f|_y)$. Uniqueness of minimal
generalization implies that the stalk of $\calM_\gtX=\eta_*(\calO_Y)$ at $\bfx$ is simply $\calO_{Y,y}$, so it
remains to recall that the latter is the semi-fraction field of the semi-valuation ring $\calO_\bfx$ defined in
\S\ref{firstsec}, which coincides with the stalk $\calO_{\gtX,\bfx}$ by Proposition \ref{goodcaseprop}.
\end{proof}

\end{document}